\title{\textbf{\large Spherical Basis Functions in Hardy Spaces with Localization Constraints}}
\author[1]{C. Gerhards\thanks{christian.gerhards@geophysik.tu-freiberg.de}}
\author[1]{X. Huang\thanks{xinpeng.huang@geophysik.tu-freiberg.de}}
\affil[1]{
	TU Bergakademie Freiberg, Institute of Geophysics and Geoinformatics, \linebreak Gustav-Zeuner-Str. 12, 09599 Freiberg, Germany
}
\date{}
\renewcommand{\SS}{\mathbb{S}}
\renewcommand{\SS}{\mathbb{S}}
\newcommand{\R}{\mathbb{R}}
\newcommand{\f}{\mathbf{f}}
\newcommand{\g}{\mathbf{g}}
\renewcommand{\d}{\mathbf{d}}
\newcommand{\n}{\boldsymbol \eta}
\newcommand{\Ro}{\mathcal{R}}
\newcommand{\grads}{\nabla_\SS}
\newcommand{\gradss}[1]{\nabla_{\SS,#1}}
\newcommand{\ptm}{\tau_{\textnormal{ptm}}}
\newcommand{\mtp}{\tau_{\textnormal{mtp}}}
\newcommand{\gradS}{\nabla_\SS}
\newcommand{\supp}{\textnormal{supp}}
\newcommand{\zeroset}{\Sigma}
\newcommand{\eps}{\varepsilon} % ...
\renewcommand{\supp}{\textnormal{supp}} % support
\newcommand{\id}{I}
\newcommand{\sobs}[1]{H^{#1}(\SS)}
\newcommand{\divfreezeroset}{\textnormal{V}_{\zeroset}}
\newcommand{\lts}{L^{2}(\SS)}
\newcommand{\sgp}{P_{stereo}}
\newcommand{\sgpx}{P_{stereo,\mathbf{p}}}
\newcommand{\ylk}{Y_{n,k}}
\newcommand{\ynk}{Y_{n,k}}
\newcommand{\Ctwo}{t}
\newcommand{\dualelement}{y}
\newcommand{\DeltaS}{\Delta_\SS}
\newcommand{\DeltaSS}[1]{\Delta_{\SS,#1}}
\newcommand\scalemath[2]{\scalebox{#1}{\mbox{\ensuremath{\displaystyle #2}}}}
\numberwithin{equation}{section}
\newtheorem{thm}{Theorem}[section]
\newtheorem{prop}[thm]{Proposition}
\newtheorem{cor}[thm]{Corollary}
\theoremstyle{definition}
\newtheorem{defi}[thm]{Definition}
\newtheorem{rem}[thm]{Remark}
\newenvironment{prfof}[1]{\noindent\textit{Proof of #1.}}
{\qed\\}
\newenvironment{abst}{\begin{center}\begin{minipage}[c]{0.9\textwidth} \footnotesize \textbf{Abstract.}}
		{\end{minipage}\\[5ex]\end{center}}
\begin{document}
\maketitle

\begin{abst}
Subspaces obtained by the orthogonal projection of locally supported square-integrable vector fields onto the Hardy spaces $H_+(\SS)$ and $H_-(\SS)$, respectively, play a role in various inverse potential field problems since they characterize the uniquely recoverable components of the underlying sources. Here, we consider approximation in these subspaces by a particular set of spherical basis functions. Error bounds are provided along with further considerations on norm-minimizing vector fields that satisfy the underlying localization constraint. The new aspect here is that the used spherical basis functions are themselves members of the subspaces under consideration.

\end{abst}

\section{Introduction}\label{sec:intro}

It is well-known that the space of square-integrable vector fields on the sphere can be decomposed as follows:
\begin{align}
	L^2(\SS,\R^3)=H_+(\SS)\oplus H_-(\SS) \oplus D(\SS),\label{eqn:hhdecomp}
\end{align}
where $H_+(\SS)$ and $H_-(\SS)$ denote the Hardy spaces of inner and outer harmonic gradients, respectively, and $D(\SS)$ the space of tangential divergence-free vector fields (see, e.g., \cite{atfeh10,freedengerhards12}, with roots in terms of vector spherical harmonic representations going back to \cite{edmonds57}). Applications to inverse magnetization problems and the separation of magnetic fields into contributions of internal and external origin are various and can be found, e.g., in \cite{backus96,gubbins11,gubjia22,mayermaier06,olsen10b,plattnersimons17,verles18}.     
 Concerning the inverse magnetization problem, knowing the magnetic field in the exterior of a magnetized sphere, only the $H_+(\SS)$-component of the underlying magnetization can be reconstructed uniquely. However, if it is known a priori that the magnetization is locally supported in some proper subdomain of the sphere, also the  $H_-(\SS)$-component is determined uniquely (which may help for the reconstruction of certain properties of the magnetization, e.g., the direction of an underlying inducing field or the susceptibility in the case of a known direction of the inducing field, e.g., \cite{gerhards16a,gerhards16b,verles19,lima13,vervelidou16}). Therefore, it is of interest to have a suitable setup that allows the computation of the $H_-(\SS)$-component based on knowledge of the $H_+(\SS)$-component under the assumption that the underlying vector field is locally supported. A first step is to investigate appropriate approximation methods in $H_+(\SS)$ and $H_-(\SS)$, respectively, under such localization constraints. 

The latter is the main motivation and content of the paper at hand. More precisely, let $L^2(\Sigma^c,\R^3)$ be the space of square-integrable vector fields that are locally supported in a subdomain $\Sigma^c\subset\SS$, and let $P_+(L^2(\Sigma^c,\R^3))$ and $P_-(L^2(\Sigma^c,\R^3))$ be the subspaces obtained by orthogonal projection onto the corresponding Hardy space $H_+(\SS)$ and $H_-(\SS)$, respectively. Here, we will investigate approximation in these projected subspaces. The specific vectorial Slepian functions from, e.g., \cite{plattnersimons14b,plattnersimons17} can address this to a certain extent. However, their construction requires joint optimization of the $H_+(\SS)$-, $H_-(\SS)$-, and $D(\SS)$-contributions and often focuses on a spectral representation. Furthermore, the computations have to be redone for every new subdomain of the sphere. Spherical basis functions, on the other hand, have the advantage that only their centers need to be adapted if the subdomain under consideration changes. Additionally, the construction as presented here allows for a separate treatment of the $H_+(\SS)$- and $H_-(\SS)$-contributions, respectively.

The course of this paper will be as follows: It is known from \cite{gerhuakeg23} that the orthogonal projection of $L^2(\Sigma^c,\R^3)$ onto the Hardy space $H_+(\SS)$ produces the subspace $P_+(L^2(\Sigma^c,\R^3))=B_+(D_{+,\Sigma})$, where $B_+$ denotes a particular vectorial linear operator that is expressible via layer potentials (cf. Section \ref{sec:prelhardy} for details and notations) and $D_{+,\Sigma}\subset L^2(\SS)$ denotes the scalar function space 
\begin{align}\label{eqn:dspmrep0}
	D_{\zeroset,+}=L^2(\zeroset^c)+(K+\tfrac{1}{2}I)V_\zeroset
\end{align}
(cf. Section \ref{sec:locrecap} for details and notations; for now, we let $K$ denote the double layer potential and $V_\Sigma$ the space of functions whose single layer potential is harmonic in $\Sigma$).  Thus, since the operator $B_+$ is bounded and invertible, the problem of finding adequate vectorial spherical basis functions for the approximation of the $H_+(\SS)$-component of locally supported vector fields reduces to finding adequate (scalar) spherical basis functions for approximation in the subspace $D_{+,\Sigma}$. The latter is, in fact, what we focus on in this paper (a similar statement holds true for the $H_-(\SS)$-component and a related subspace $D_{-,\Sigma}$). The structure of $D_{+,\Sigma}$ according to \eqref{eqn:dspmrep0} indicates that it suffices to investigate approximation in $L^2(\Sigma^c)$ and approximation in $V_\Sigma$. The former is discussed in Section \ref{sec:approxl2s}, the latter in Section \ref{sec:approxvsigma}. An approximation result for locally supported spherical basis functions similar to the one in Section \ref{sec:approxl2s} has already been obtained, e.g., in \cite{LeGia2017}. However, in their case, the support of the used spherical basis functions does not necessarily lie within $\Sigma^c$, while we want to enforce this support condition in order to guarantee that the functions used for approximation are members of $L^2(\Sigma^c)$ themselves. In a Euclidean setup, the latter has been achieved in \cite{towwen13}. We simply transfer their result to the sphere by applying the spherical techniques used, e.g., in \cite{legnarwar06,LeGia2017,legia10} and the stereographic projection. For the study of approximation in $V_\Sigma$ we rely on regularized fundamental solutions of the Laplace-Beltrami operator, as described, e.g., in \cite{freedenschreiner06,freedengerhards10,freedenschreiner09}. These spherical basis functions have the advantage that they are locally harmonic and naturally lead to a function system that is contained in $V_\Sigma$ itself. Both results are then joined in Section \ref{sec:approxhardy} to obtain the desired approximation property in $D_{+,\Sigma}$, which is stated in Theorem \ref{thm:densedsigma+}. As indicated before, the new aspect here is not the approximation property in general but that the investigated spherical basis functions lead to vectorial functions that are themselves members of the projected subspace $P_+(L^2(\Sigma^c,\R^3))$ of $H_+(\SS)$. Sections \ref{sec:minnorm} and \ref{sec:minnormsbf} additionally provide some considerations on locally supported vector fields of minimum norm and their representation in the given context. Sections \ref{sec:back1} and \ref{sec:back2} gather the required notations and background material, some numerical illustration is provided in Section \ref{sec:numerics}.

\section{General preliminaries}\label{sec:back1}

We define some necessary notations required throughout the course of this paper and gather some known results on spherical basis function interpolation, cubature on the sphere, and the fundamental solution for the Laplace-Beltrami operator on the sphere.

\subsection{Sobolev spaces on the sphere}\label{sec:sobspace}

Let $\SS=\{x\in\R^3:|x|=1\}$ denote the unit sphere. We restrict ourselves to the three-dimensional case to simplify some considerations, especially in Sections \ref{sec:reggreen} and \ref{sec:approxvsigma}, although they should also hold in higher dimensional setups. Let $\{Y_{n,k}\}_{n\in\mathbb{N},k=-n,\ldots,n}$ denote an orthonormal set of real spherical harmonics of order $n$ and degree $k$ (each spherical harmonic of degree $n$ and  order $k$ is the restriction of a homogeneous, harmonic polynomial of degree $n$ to the sphere; details can be found, e.g., in \cite{freedenschreiner09}). For an infinitely often differentiable function $f:\SS\to\R$ of class $C^\infty(\SS)$, one can write $f=\sum_{n=0}^\infty\sum_{k=-n}^n\hat{f}_{n,k}Y_{n,k}$ with Fourier coefficients $\hat{f}_{n,k}=\int_\SS f(y)Y_{n,k}(y)d\omega(y)$, where $\omega$ denotes the surface measure on the sphere. When dealing with such Fourier coefficients, we often say that we are arguing in spectral domain. The Sobolev space $H^s(\SS)$ of smoothness $s\in\R$ is defined as the closure of $C^\infty(\SS)$ with respect to the norm $\|\cdot\|_{H^s(\SS)}$ induced by the inner product
\begin{align}\label{eqn:innerprodhs}
	\langle f,g\rangle_{H^s(\SS)}=\sum_{n=0}^\infty\sum_{k=-n}^n\left(n+\frac{1}{2}\right)^{2s}\hat{f}_{n,k}\,\hat{g}_{n,k},
\end{align}
i.e., $H^s(\SS)=\overline{C^\infty(\SS)}^{\|\cdot\|_{H^s(\SS)}}$. In particular, it holds that $H^0(\SS)=L^2(\SS)$, the space of square-integrable functions, and that $C^0(\SS)\subset H^s(\SS)$ for $s>1$, where $C^0(\SS)$ denotes the set of continuous functions on the sphere. By $\|\cdot\|_{C^0(\SS)}$ we denote the supremum norm. At some occasions, we use the notation $L^2(\SS)/\langle1\rangle$ to indicate the space of functions $f$ in $L^2(\SS)$ with vanishing mean $\langle f,1\rangle_{L^2(\SS)}=0$. 

We denote by $\grads$ the (surface) gradient on the sphere. For $f$ in $H^1(\SS)$, its gradient $\grads f$ is well-defined and lies in the space $L^2(\SS,\R^3)$ of square-integrable vector fields, which is equipped with the inner product
\begin{align}
		\langle \f,\g\rangle_{L^2(\SS)}=\int_\SS \f(y)\cdot\g(y)\,d\omega(y)
\end{align}
and the corresponding norm $\|\cdot\|_{L^2(\SS,\R^3)}$. Typically, we denote vector fields $\f,\g:\SS\to\R^3$ by bold-face letters. The (surface) divergence operator $\grads\cdot$ is defined as the negative adjoint $-\grads^*$ (with respect to the $L^2$-inner product) and we define the Laplace-Beltrami operator via $\Delta_\SS=\grads\cdot\grads=-\grads^*\grads$. 

If $\Sigma\subset\SS$ is a Lipschitz domain with connected boundary, then the space of square-integrable locally supported vector fields on $\Sigma$ is denoted by $L^2(\Sigma,\R^3)$, naturally embedded in $L^2(\SS,\R^3)$ via extension by zero. Furthermore, we define $H^s(\Sigma)$ as the closure of $C_0^\infty(\Sigma)=\{f\in C^\infty(\SS):\supp(f)\subset\Sigma\}$ with respect to the $\|\cdot\|_{H^s(\SS)}$-norm. Thus, $H^s(\Sigma)$ is naturally embedded in $H^s(\SS)$.

\begin{rem}\label{rem:spectrepdeltas}
	For $f$ in  $H^2(\SS)$ it holds that $(\Delta_\SS f)^\wedge_{n,k}=-n(n+1)\hat{f}_{n,k}$ and, consequently, that $((-\Delta_\SS+\frac{1}{4}) f)^\wedge_{n,k}=\big(n+\frac{1}{2})^2\hat{f}_{n,k}$. Thus, one can (formally) see from \eqref{eqn:innerprodhs} that a function $f$ is in $H^s(\SS)$ if and only if $(-\Delta_\SS+\frac{1}{4})^{\frac{s}{2}}f$ is in $\lts$.
\end{rem}

A function $f:\SS\to\R$ is called $\mathbf{p}$-zonal (for some fixed $\mathbf{p}\in \SS$) if there exists a function $F:[-1,1]\to\R$ such that $f(x)=F(x\cdot \mathbf{p})$ for all $x\in\SS$. In that case, we define $\Ro_{x}f(y)=f(z)$, for $z\in\SS$ chosen such that $x\cdot y=z\cdot \mathbf{p}$. This allows the definition of the convolution $f*g$ of two functions $f$ and $g$ via
\begin{equation}\label{eqn:conv}
	f\ast g (x)= \int_{\SS} \Ro_{x}f(y)\,g(y)d\omega(y),\quad x\in\SS.
\end{equation}
For $F:[-1,1]\to\R$, the notation $F*g$ is meant in the sense $F\ast g (x)= \int_{\SS} F(x\cdot y)\,g(y)d\omega(y)$, naturally relating to the definition \eqref{eqn:conv}.

\begin{prop}\label{thm:smoothnessofconvolution}
	If $f$ is a $\mathbf{p}$-zonal function in $\sobs{s}$ and $g$ is in $\sobs{k}$, for some $s,k\in\R$, then
\begin{equation}
	\|f\ast g\|_{\sobs{s+k+\nicefrac{1}{2}}} \leq C  \| f\|_{\sobs{s}} \| g\|_{\sobs{k}},
\end{equation}
where $C>0$ is a constant depending  on $s$ and $k$.
\end{prop}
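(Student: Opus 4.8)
The plan is to work entirely in spectral domain and reduce the estimate to scalar sequences indexed by the harmonic degree $n$, closing with an elementary inequality. First I would record the two spectral facts that drive the argument. Since $f$ is $\mathbf{p}$-zonal, I write $f(x)=F(x\cdot\mathbf{p})$ with Legendre expansion $F=\sum_n a_n P_n$; the addition theorem $\sum_{k=-n}^n Y_{n,k}(x)Y_{n,k}(y)=\frac{2n+1}{4\pi}P_n(x\cdot y)$ then gives $\hat{f}_{n,k}=a_n\,\frac{4\pi}{2n+1}\,Y_{n,k}(\mathbf{p})$. Summing the squares over $k$ and invoking the addition theorem once more at $x=y=\mathbf{p}$ collapses the angular dependence and leaves
\[
\|f\|_{\sobs{s}}^2 = 2\pi\sum_{n=0}^\infty \big(n+\tfrac{1}{2}\big)^{2s-1}a_n^2.
\]
The decisive feature here is the exponent $2s-1$ in place of $2s$: the zonal structure costs one power of $\big(n+\tfrac{1}{2}\big)$, and this is precisely the origin of the $+\tfrac{1}{2}$ smoothing.

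Next I would determine the spectrum of the convolution. By the Funk-Hecke formula, convolution against the zonal kernel $F$ acts diagonally on spherical harmonics, $(f\ast g)^\wedge_{n,k}=\lambda_n\,\hat{g}_{n,k}$ with multiplier $\lambda_n=2\pi\int_{-1}^1 F(t)P_n(t)\,dt=\frac{4\pi}{2n+1}a_n$. Writing $j$ for the degree index to avoid a clash with the smoothness parameter $k$ of $g$, this gives
\[
\|f\ast g\|_{\sobs{s+k+1/2}}^2 = \sum_{n=0}^\infty \big(n+\tfrac{1}{2}\big)^{2s+2k+1}\lambda_n^2\sum_{j=-n}^n\hat{g}_{n,j}^2.
\]
Substituting $\lambda_n^2=\frac{4\pi^2}{(n+1/2)^2}a_n^2$ turns the weight into $4\pi^2\,\big(n+\tfrac{1}{2}\big)^{2s+2k-1}a_n^2$, and since the exponent splits as $(2s-1)+2k$, the full summand factors exactly as $\big[\big(n+\tfrac{1}{2}\big)^{2s-1}a_n^2\big]\cdot\big[\big(n+\tfrac{1}{2}\big)^{2k}\sum_{j}\hat{g}_{n,j}^2\big]$. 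The first bracket is, up to the factor $2\pi$, the $n$-th summand of $\|f\|_{\sobs{s}}^2$, and the second is precisely the $n$-th summand of $\|g\|_{\sobs{k}}^2$.

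The remaining step is purely arithmetic: for non-negative sequences $(\alpha_n)$ and $(\beta_n)$ one has $\sum_n\alpha_n\beta_n\le\big(\sum_n\alpha_n\big)\big(\sum_n\beta_n\big)$, because the right-hand side equals the full double sum $\sum_{n,m}\alpha_n\beta_m$ and the discarded off-diagonal terms are non-negative. Applying this with $\alpha_n=\big(n+\tfrac{1}{2}\big)^{2s-1}a_n^2$ and $\beta_n=\big(n+\tfrac{1}{2}\big)^{2k}\sum_{j}\hat{g}_{n,j}^2$ yields $\|f\ast g\|_{\sobs{s+k+1/2}}^2\le 2\pi\,\|f\|_{\sobs{s}}^2\,\|g\|_{\sobs{k}}^2$, i.e.\ the asserted bound with $C=\sqrt{2\pi}$ — in fact independent of $s$ and $k$, which is marginally stronger than stated.

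I do not anticipate a genuine obstacle in any individual step; the real content is bookkeeping the normalization constants in the addition theorem and the Funk-Hecke formula and observing that the powers of $\big(n+\tfrac{1}{2}\big)$ line up so that the convolution weight separates into the two factor weights — the single power of $\big(n+\tfrac{1}{2}\big)$ lost to the zonal structure of $f$ being exactly what produces the gain of $\tfrac{1}{2}$ derivatives. The only point that warrants care is justifying these spectral manipulations for general real $s,k$ rather than for smooth $f,g$: I would first prove the inequality for finite spherical-harmonic expansions, where every series is a finite sum and Funk-Hecke applies termwise, and then pass to arbitrary $f\in\sobs{s}$ and $g\in\sobs{k}$ by density together with the a priori bound just derived.
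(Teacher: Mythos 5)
Your proposal is correct and follows essentially the same route as the paper's proof: both work in spectral domain, use the addition theorem to show that the zonal structure of $f$ costs one power of $(n+\tfrac{1}{2})$ in its norm, observe that convolution acts diagonally with multiplier proportional to the zonal coefficients, and close with the elementary term-wise bound (your $\sum_n\alpha_n\beta_n\leq(\sum_n\alpha_n)(\sum_n\beta_n)$ is the same mechanism as the paper's pointwise estimate $|\hat{f}_n|\leq\sqrt{2\pi}\,\|f\|_{\sobs{s}}(n+\tfrac{1}{2})^{-s-\nicefrac{1}{2}}$), arriving at the same constant $\sqrt{2\pi}$. The only differences are cosmetic: your normalization via Legendre coefficients of $F$ versus the paper's coefficients $\hat{f}_n$, and your explicit density argument for general real $s,k$, which the paper leaves implicit.
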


A proof of the proposition above is provided in Appendix \ref{sec:prfprop21}. Furthermore, the following estimate will be helpful.

\begin{prop}{\cite[Theorem 24]{Coulhon2001}}\label{thm:smoothnessofmultiplication}
Let $s>1$. Then $\sobs{s}$ is a Banach algebra, i.e., there exists a constant $C>0$, depending  on $s$, such that for any $f,g$ in $\sobs{s}$, the following inequality holds
\begin{equation}
	\|fg\|_{\sobs{s}}\leq C\|f\|_{\sobs{s}}\|g\|_{\sobs{s}}.
\end{equation}
\end{prop}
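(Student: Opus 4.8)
The plan is to exploit the spectral description of the Sobolev norm together with the embedding $\sobs{s}\hookrightarrow C^0(\SS)$, valid precisely for $s>1$, and to reduce the claimed bound to a Leibniz-type (tame) inequality. Writing $\Lambda^s=(-\DeltaS+\tfrac14)^{s/2}$, Remark \ref{rem:spectrepdeltas} gives $\|h\|_{\sobs{s}}=\|\Lambda^s h\|_{\lts}$ for every $h$, so it suffices to control $\|\Lambda^s(fg)\|_{\lts}$. The target reduction is the tame estimate
\[
\|fg\|_{\sobs{s}}\le C\big(\|f\|_{\sobs{s}}\|g\|_{C^0(\SS)}+\|f\|_{C^0(\SS)}\|g\|_{\sobs{s}}\big),
\]
after which the Banach-algebra bound follows at once by inserting $\|f\|_{C^0(\SS)}\le C\|f\|_{\sobs{s}}$ (and likewise for $g$), which holds exactly because $s>1$ exceeds half the dimension of $\SS$.

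To establish the tame estimate I would use a Littlewood--Paley (paraproduct) decomposition adapted to the Laplace--Beltrami operator. Let $Q_j$ denote the spectral projector onto the frequency band $n\in[2^{j},2^{j+1})$ (with $Q_{-1}$ the projection onto $n=0$), so that $\mathrm{Id}=\sum_{j\ge-1}Q_j$ and $\|h\|_{\sobs{s}}^2\simeq\sum_j 2^{2js}\|Q_j h\|_{\lts}^2$. Decomposing $fg=\sum_{j,k}(Q_jf)(Q_kg)$ and splitting into the three regimes $j\ll k$, $j\gg k$, and $j\simeq k$, the two ``low--high'' pieces are the easy ones: there one factor is frequency-localized below the output frequency, so after applying $\Lambda^s$ and resumming one gains a factor $\|f\|_{C^0(\SS)}$ or $\|g\|_{C^0(\SS)}$ (via $\|Q_jh\|_{C^0(\SS)}\lesssim\|h\|_{C^0(\SS)}$) paired against the full $\sobs{s}$-norm of the other factor.

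The main obstacle is the ``high--high'' regime $j\simeq k$, where both factors oscillate faster than the output frequency and one cannot simply extract an $L^\infty$-norm at the output scale. Here I would invoke the Bernstein-type inequalities for spectral projectors on $\SS$, namely $\|Q_j h\|_{C^0(\SS)}\lesssim 2^{j}\|Q_jh\|_{\lts}$ (the exponent $2^{jd/2}=2^j$ for $d=2$), together with the near-orthogonality of the $Q_j$, to sum the diagonal contributions. Tracking the scales, the diagonal term is controlled by a series of the form $\sum_j 2^{2j(1-s)}(\cdots)$, and this is exactly the step where the hypothesis $s>1$ is indispensable: the geometric factor $2^{2j(1-s)}$ is summable if and only if $s>d/2=1$, mirroring the same threshold as the embedding into $C^0(\SS)$. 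Once the three pieces are assembled, the tame estimate, and with it the Banach-algebra property, follows. Alternatively, one may prove the statement for integer $s$ directly from the Leibniz rule, Hölder's inequality, and Gagliardo--Nirenberg interpolation, and then recover all real $s>1$ by bilinear complex interpolation of the multiplication map between integer endpoints.
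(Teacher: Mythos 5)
The paper does not prove this proposition at all: it is imported verbatim from \cite[Theorem 24]{Coulhon2001}, so there is no internal argument to compare against. Your paraproduct proof is the standard self-contained route to this result and its architecture is sound: reduction to the tame estimate, disposal of the low--high interactions by a uniform $C^0$-bound on the low-frequency factor, and control of the diagonal via the Bernstein bound $\|Q_j h\|_{C^0(\SS)}\lesssim 2^{j}\|Q_j h\|_{\lts}$ (the exponent $d/2=1$ is correct for the two-dimensional sphere), with $s>1$ entering exactly where you say it does, in the summability of $2^{2j(1-s)}$ and in the embedding $\sobs{s}\hookrightarrow C^0(\SS)$.

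Two caveats. First, take the $Q_j$ to be smooth dyadic cutoffs $\phi\bigl(2^{-j}(-\DeltaS+\tfrac14)^{1/2}\bigr)$ rather than sharp spectral projectors onto the band $n\in[2^j,2^{j+1})$: the inequality $\|Q_j h\|_{C^0(\SS)}\lesssim\|h\|_{C^0(\SS)}$ that you use in the low--high regime fails uniformly in $j$ for sharp projectors (already the single-degree projector is convolution with $\frac{2n+1}{4\pi}P_n(x\cdot y)$, whose $L^1$-norm grows like $n^{1/2}$), whereas for smooth cutoffs it follows from standard kernel estimates on compact manifolds; the fact that the product of spherical harmonics of degrees $m$ and $n$ has frequency support in $[|m-n|,m+n]$ is what justifies the three-regime splitting. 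Second, the alternative you sketch at the end does not work as stated for $s\in(1,2)$: the nearest integer endpoint below is $s=1$, and $H^1(\SS)$ is \emph{not} a Banach algebra (it is the critical, non-embedding exponent in dimension two), so there are no two integer algebra endpoints between which bilinear interpolation could recover, say, $s=3/2$. Stick with the paraproduct argument, or simply keep the external citation as the paper does.
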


To conclude this section, we want to mention that occasionally it is convenient to rely on known results in the Euclidean setup by using the stereographic projection. Given a fixed pole $\mathbf{p}\in\SS$, let $\sgp:\SS\setminus\{\mathbf{p}\}\to\R^2$ be the stereographic projection as defined, e.g., in \cite[Def. 2.34]{freedengerhards12}. %with respect to that pole is defined as

Typically we do not indicate the pole explicitly in the notation of the stereographic projection. If it should be required for clarity, we write $\sgpx$ instead of $\sgp$. The stereographic projection of a set $\Sigma\subset \SS\setminus\{\mathbf{p}\}$ is given by $\sgp(\Sigma)=\{\sgp(x):x\in\Sigma\}$. Furthermore, the stereographic projection of a function $f:\SS\setminus\{\mathbf{p}\}\to\R$ is defined via
\begin{align}
	\sgp f:\R^2 \to \R,\qquad y \mapsto f(\sgp^{-1}(y)).
\end{align}
The following are some useful estimates involving the stereographic projection. 

\begin{prop}\label{prop:propertyofstereographicprojection}
	Let $\mathbf{p}\in\SS$ be fixed and $\Sigma\subset\SS$ be a Lipschitz domain with connected boundary whose closure satisfies $\overline{\Sigma}\subset\SS\setminus\{\mathbf{p}\}$. Then there exist constants $C>c>0$ such that, for any $x,y\in\Sigma$,
	\begin{align}
		c|x-y|&\leq |\sgp(x)-\sgp(y)|\leq C |x-y|,\label{eqn:upboundstereopt}
		\\c\,\omega(\Sigma)&\leq \mu(\sgp(\Sigma))\leq C\,\omega(\Sigma),
	\end{align}
	where $\mu$ denotes the Lebesgue measure in $\R^2$ and $\omega$ the surface measure on the sphere $\SS$.
	
	Let $s\geq 0$, then there exist constants $C\geq c>0$ (possibly different from above), depending  on $s$, such that for any $f\in H^s(\Sigma)$ the following inequalities hold true:
	\begin{align}
		c\|f\|_{H^{s}(\SS)} \leq \|\sgp f\|_{H^{s}(\R^2)}\leq C\|f\|_{H^{s}(\SS)}.
	\end{align}
\end{prop}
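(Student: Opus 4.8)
The plan is to exploit that, although the stereographic projection $\sgp$ degenerates at the pole $\mathbf{p}$, it restricts to a $C^\infty$-diffeomorphism on any compact subset of $\SS\setminus\{\mathbf{p}\}$. Since $\overline{\Sigma}\subset\SS\setminus\{\mathbf{p}\}$ is compact by hypothesis, every geometric distortion factor attached to $\sgp$ stays bounded above and below on $\overline{\Sigma}$, and all three asserted equivalences will flow from this single principle. Concretely, I would record the explicit coordinate expression of $\sgp$ together with its conformal Jacobian, the point being that the quantity $1-x\cdot\mathbf{p}$ occurring in every denominator is bounded away from zero uniformly for $x\in\overline{\Sigma}$, so that $\sgp$ and $\sgp^{-1}$ are the restrictions of maps that are smooth, with uniformly bounded derivatives of every order, on a neighborhood of $\overline{\Sigma}$ and of $\sgp(\overline{\Sigma})$, respectively.

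For the bi-Lipschitz estimate \eqref{eqn:upboundstereopt} I would simply apply the mean value inequality to these ambient extensions: the upper bound comes from the boundedness of $D\sgp$ on $\overline{\Sigma}$, the lower bound from writing $|x-y|=|\sgp^{-1}(\sgp(x))-\sgp^{-1}(\sgp(y))|$ and bounding $D\sgp^{-1}$ on $\sgp(\overline{\Sigma})$ (here $|x-y|$ is the chordal distance in $\R^3$, which needs no separate treatment). The measure comparison is then immediate, either as a consequence of the bi-Lipschitz property or by integrating the conformal Jacobian of $\sgp^{-1}$, a smooth positive function bounded above and below on the compact set $\sgp(\overline{\Sigma})$.

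The substantive part is the Sobolev equivalence. For $s=0$ it is immediate from the measure comparison after the change of variables $y=\sgp(x)$, since $\|f\|_{L^2(\SS)}^2$ and $\|\sgp f\|_{H^0(\R^2)}^2$ then differ only by the bounded Jacobian. For integer $s$ I would argue inductively: the chain rule expresses the Euclidean derivatives of $\sgp f=f\circ\sgp^{-1}$ up to order $s$ as bounded combinations of the surface derivatives of $f$ up to order $s$, with coefficients built from the bounded derivatives of $\sgp^{-1}$, and conversely; combined with the $s=0$ case this yields equivalence in both directions. At this step it is essential that $f\in H^s(\Sigma)$ is compactly supported in $\Sigma$, so that the global spectral norm of Remark \ref{rem:spectrepdeltas} coincides, up to constants, with an intrinsic norm assembled from iterated surface derivatives on a neighborhood of $\overline{\Sigma}$ --- a standard fact for Sobolev spaces on the compact manifold $\SS$. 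For non-integer $s\geq0$ I would close the remaining gaps by interpolation between consecutive integer orders, using that $\{H^s(\SS)\}$ and $\{H^s(\R^2)\}$ are interpolation scales and that the boundedly invertible pullback $f\mapsto\sgp f$ respects interpolation.

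The main obstacle is exactly this passage to fractional and globally-defined smoothness: the norm on $H^s(\SS)$ is prescribed spectrally through \eqref{eqn:innerprodhs}, a nonlocal object, whereas $\sgp$ only perceives the geometry near $\overline{\Sigma}$. Reconciling the two rests on the equivalence of the spectral definition with a chart-based local definition of $H^s(\SS)$ for compactly supported functions; for integer orders this is elementary, but for fractional orders it must be justified via interpolation or via the mapping properties of $(-\DeltaS+\tfrac14)^{s/2}$. I would either carry this out directly or cite the corresponding statement from the literature on function spaces on manifolds, noting that the cutoff provided by $\supp f\subset\Sigma$ is what makes the single stereographic chart sufficient.
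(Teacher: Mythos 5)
The paper itself states Proposition \ref{prop:propertyofstereographicprojection} without proof, so there is no in-text argument to compare against; judged on its own, your proposal is a correct and standard way to establish the result. The bi-Lipschitz bounds via the mean value inequality applied to smooth ambient extensions of $\sgp$ and $\sgp^{-1}$ (using that $1-x\cdot\mathbf{p}$ is bounded away from zero on $\overline{\Sigma}$), and the measure comparison via the bounded conformal Jacobian, are unobjectionable. You also correctly isolate the only genuinely delicate point, namely reconciling the global spectral definition \eqref{eqn:innerprodhs} of $\|\cdot\|_{H^s(\SS)}$ with a chart-local argument for fractional $s$. Two details are worth making explicit if you write this out. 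First, to interpolate the pullback between integer orders you need an operator defined on the whole scale, not just on $H^s(\Sigma)$; the standard fix is to insert a smooth cutoff $\chi$ with $\chi\equiv 1$ on $\overline{\Sigma}$ and $\supp\chi\subset\SS\setminus\{\mathbf{p}\}$ and interpolate $f\mapsto\sgp(\chi f):H^s(\SS)\to H^s(\R^2)$ (and symmetrically with a cutoff on $\R^2$ for the reverse direction), using that multiplication by $\chi$ is bounded on $H^s(\SS)$ for all $s\geq 0$. Second, the equivalence of the spectral norm with a chart-based norm for compactly supported functions, which you flag, does need a citation or an interpolation argument of its own at non-integer orders; it is not purely a chain-rule computation. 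With those two points addressed, the proof is complete.
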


Throughout the course of this article, we allow all appearing constants to depend on the subdomain $\Sigma\subset\SS$ without explicitly mentioning this. In cases where the constant does not depend on the subdomain, this should become clear from the context.

\subsection{Cubature rules on the sphere}\label{sec:cub}

Given a set of points $X=\{x_1,\ldots,x_N\}\subset\SS$ and weights $w_1,\ldots,w_N\in\R$, then the corresponding cubature rule $Q_X$ takes the form
\begin{equation}
	Q_{X}f=\sum_{i=1}^{N} w_i f(x_i).
\end{equation}
We say that $Q_X$ has polynomial precision of degree $L$ if $Q_Xf=\int_{\SS} f(y)d\omega(y)$ for all spherical polynomials of degree at most $L$, i.e., for all functions of the form $f=\sum_{n=0}^L\sum_{k=-n}^n\hat{f}_{n,k}Y_{n,k}$. If all weights are positive, i.e., if $w_i>0$ for all $i=1,,\ldots,N$, then we call $Q_X$ a cubature rule with positive weights. The spatial distribution of the points in $X$ is paramount for the existence of cubature rules with a prescribed polynomial precision and positive weights. An important parameter that provides information on the spatial distribution is the mesh width of $X$ relative to some subset $\Sigma\subset\SS$, which is defined by 
\begin{align}
	h_{X, \Sigma}=\sup_{y\in\Sigma}\min_{x_i\in X\cap \Sigma}\|y-x_i\|.
\end{align} 
If $\Sigma=\SS$, we typically abbreviate $h_{X, \Sigma}$ simply by $h_{X}$.
The separation distance is defined by 
\begin{align}
	q_{X,\Sigma}=\frac{1}{2}\min_{x_i,x_j\in X\cap\Sigma\atop x_i\not=x_j}\|x_i-x_j\|,
\end{align} 
again, we drop the index $\Sigma$ if $\Sigma=\SS$.

\begin{thm}{(Existence of cubature rules \cite[Theorem 6]{hesse15})}\label{thm:cubature2}
	There exists a constant $C>0$ such that following holds true: For every pointset $X\subset\SS$ and for every $L\in\mathbb{N}$ satisfying
	\begin{equation}
		h_{X}\leq \frac{C}{L},\label{eqn:meshnormcond}
	\end{equation}
	there exist positive weights $w_1, \ldots ,w_N>0$ such that the corresponding cubature rule $Q_X$ has polynomial precision of degree $L$.
\end{thm}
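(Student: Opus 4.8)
The plan is to reduce the statement to a finite-dimensional feasibility problem on the space $\Pi_L$ of spherical polynomials of degree at most $L$ (of dimension $(L+1)^2$), whose solvability is governed by a Marcinkiewicz--Zygmund (MZ) sampling inequality. First I would associate with $X$ a measurable partition $\{R_i\}_{i=1}^N$ of $\SS$ such that $x_i\in R_i$ and $\operatorname{diam}(R_i)\le c\,h_X$, and set $\mu_i=\omega(R_i)>0$; such a partition exists precisely because $h_X$ controls how densely $X$ fills $\SS$. The goal of the first part is the two-sided estimate
\[
 \tfrac12\|p\|_{L^2(\SS)}^2\le \sum_{i=1}^N \mu_i\,p(x_i)^2\le \tfrac32\|p\|_{L^2(\SS)}^2,\qquad p\in\Pi_L,
\]
valid once $h_X\le C/L$ with $C$ chosen small enough.

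To establish this MZ inequality I would control the elementary quadrature error $E(f)=\sum_i\mu_i f(x_i)-\int_\SS f\,d\omega=\sum_i\int_{R_i}\bigl(f(x_i)-f(y)\bigr)\,d\omega(y)$ by a mean-value argument on each cell, giving essentially $|E(f)|\le c\,h_X\,\|\grads f\|_{L^1(\SS)}$. Applying this to $f=p^2$ and using $\|\grads(p^2)\|_{L^1(\SS)}\le 2\|p\|_{L^2(\SS)}\|\grads p\|_{L^2(\SS)}$ together with the Bernstein inequality $\|\grads p\|_{L^2(\SS)}\le\sqrt{L(L+1)}\,\|p\|_{L^2(\SS)}$ --- which is immediate from the spectral characterization in Remark~\ref{rem:spectrepdeltas}, since $-\Delta_\SS$ has eigenvalues $n(n+1)\le L(L+1)$ on $\Pi_L$ --- yields $|E(p^2)|\le c\,h_X\,\sqrt{L(L+1)}\,\|p\|_{L^2(\SS)}^2$. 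Choosing the constant in $h_X\le C/L$ small enough forces $|E(p^2)|\le\tfrac12\|p\|_{L^2(\SS)}^2$, which is exactly the claimed MZ inequality.

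With the MZ inequality in hand, the bilinear form $B(p,r)=\sum_i\mu_i p(x_i)r(x_i)$ is an inner product on $\Pi_L$ equivalent to the $L^2$-inner product. By Riesz representation there is then a unique $q\in\Pi_L$ with $B(q,p)=\int_\SS p\,d\omega$ for every $p\in\Pi_L$; setting $w_i=\mu_i q(x_i)$ produces a rule $Q_Xf=\sum_i w_i f(x_i)$ that is exact on $\Pi_L$ by construction. Equivalently, one may phrase the existence of admissible weights as the linear feasibility problem $\sum_i w_i p_j(x_i)=\int_\SS p_j\,d\omega$, $w_i\ge 0$, for a basis $\{p_j\}$ of $\Pi_L$; by Farkas' lemma this is solvable unless some $P\in\Pi_L$ satisfies $P(x_i)\ge 0$ for all $i$ while $\int_\SS P\,d\omega<0$, so the remaining task is to exclude such a polynomial.

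The main obstacle is the positivity $w_i>0$, i.e.\ $q(x_i)>0$. Testing the identity $B(q-1,\cdot)=-E(\cdot)$ against $p=q-1$ and combining the MZ lower bound with the error estimate gives $\|q-1\|_{L^2(\SS)}\le c\,h_X L$, so $q$ is close to the constant $1$ in $L^2$; the difficulty is that upgrading this to a pointwise bound at the nodes through a naive Nikolskii inequality loses an extra factor $L$ and is not strong enough under $h_X\le C/L$. Controlling $q$ pointwise (or, in the dual formulation, the sign-constrained polynomial $P$) therefore requires the full strength of the MZ machinery and careful tracking of the absolute constants --- this is the delicate heart of the argument, and it is precisely the content we take from \cite[Theorem 6]{hesse15}.
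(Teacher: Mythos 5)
First, a structural point: the paper gives no proof of this statement at all --- Theorem \ref{thm:cubature2} is imported verbatim from \cite[Theorem 6]{hesse15} --- so there is no internal argument to compare yours against line by line. Judged on its own terms, your proposal correctly assembles the standard skeleton: the Voronoi-type partition with $\mu_i=\omega(R_i)$, the cell-wise quadrature error bound, the $L^2$ Bernstein inequality read off from Remark \ref{rem:spectrepdeltas}, the resulting Marcinkiewicz--Zygmund equivalence on $\Pi_L$, and the Riesz-representation construction of exact weights $w_i=\mu_i q(x_i)$. But it stops exactly where the theorem begins. Exactness alone is essentially trivial once the evaluation functionals span $\Pi_L^*$; the entire content of the cited result is the positivity $w_i>0$, and your final paragraph explicitly defers this to the reference. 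You also correctly diagnose why your own route cannot deliver it: $\|q-1\|_{L^2(\SS)}\leq c\,h_XL$ can be made a small constant by shrinking the constant in \eqref{eqn:meshnormcond}, but the Nikolskii upgrade to a pointwise bound at the nodes costs a factor of order $L$, so $\|q-1\|_{L^\infty(\SS)}$ is not controlled. As a proof, the proposal is therefore genuinely incomplete.

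The gap can be closed with one additional ingredient that your Farkas reformulation is already set up to receive: the $L^1$ Marcinkiewicz--Zygmund inequality, which rests on the $L^1$ Bernstein inequality $\|\grads P\|_{L^1(\SS)}\leq c\,L\,\|P\|_{L^1(\SS)}$ for $P\in\Pi_L$ (this does \emph{not} follow from the spectral argument, which only covers $p=2$, and is a genuinely extra input). The same cell-wise estimate applied to $P$ and to $|P|$ then gives both $|E(P)|\leq c\,h_XL\,\|P\|_{L^1(\SS)}$ and $\|P\|_{L^1(\SS)}\leq(1-c\,h_XL)^{-1}\sum_i\mu_i|P(x_i)|$. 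Now suppose $P\in\Pi_L$ is the separating polynomial from your Farkas alternative, i.e.\ $P(x_i)\geq0$ for all $i$. Then $|P(x_i)|=P(x_i)$, and with $\eta=c\,h_XL/(1-c\,h_XL)<1$ (achieved by shrinking the constant in \eqref{eqn:meshnormcond}) one obtains
\begin{equation*}
\int_\SS P\,d\omega\;\geq\;\sum_i\mu_iP(x_i)-|E(P)|\;\geq\;(1-\eta)\sum_i\mu_iP(x_i)\;\geq\;0,
\end{equation*}
contradicting $\int_\SS P\,d\omega<0$. This excludes the separating polynomial and yields nonnegative exact weights; the quantitative form of the same estimate places the integration functional in the interior of the cone generated by the point evaluations, which gives strictly positive weights comparable to $\mu_i$. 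This is essentially the Mhaskar--Narcowich--Ward argument on which \cite{hesse15} builds, and it is the piece your write-up would need to supply to count as a proof.
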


The existence of cubature rules with polynomial precision of degree $L$ and positive weights then guarantees the following error estimate for our later considerations in Section \ref{sec:approxvsigma}.  

\begin{thm}{(Error estimate for cubature rules \cite[Theorem 11]{hesse15})}\label{thm:cubature1}
	Let $s>1$ and $X\subset\SS$. Then there exist a constant $C>0$ that depends on $s$ such that, for every $L\in\mathbb{N}$ and for every cubature rule $Q_X$ with polynomial precision of degree $L$ and positive weights, the worst-case error in $H^s(\SS)$ can be estimated by
	\begin{equation}
		\sup_{f\in\sobs{s},\|f\|_{\sobs{s}}\leq 1}\left|Q_Xf-\int_{\SS} f(y)d\omega(y)\right|\leq \frac{C}{L^s}.
	\end{equation}
\end{thm}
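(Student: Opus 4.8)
The plan is to pass to the Hilbert-space dual. For $s>1$ the embedding $C^0(\SS)\subset\sobs{s}$ recorded above shows that point evaluations, and hence the error functional $E(f)=Q_Xf-\int_\SS f\,d\omega$, are bounded on $\sobs{s}$, and the worst-case error is precisely the operator norm $\|E\|_{(\sobs{s})^*}$. Thus $\sobs{s}$ is a reproducing kernel Hilbert space with kernel $K_s(x,y)=\sum_{n\ge0}\sum_{k=-n}^n(n+\tfrac12)^{-2s}Y_{n,k}(x)Y_{n,k}(y)$, whose diagonal series converges exactly because $s>1$. Identifying $E$ with the signed measure $\sigma=\sum_iw_i\delta_{x_i}-\omega$, Parseval in the norm \eqref{eqn:innerprodhs} gives
\[
\|E\|^2=\sum_{n=0}^\infty(n+\tfrac12)^{-2s}\sum_{k=-n}^n\hat\sigma_{n,k}^2,\qquad \hat\sigma_{n,k}=\sum_iw_iY_{n,k}(x_i)-\int_\SS Y_{n,k}\,d\omega.
\]
I would then use the two defining properties of $Q_X$. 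Polynomial precision of degree $L$ forces $\hat\sigma_{n,k}=0$ for all $0\le n\le L$ (the constant mode because $\sum_iw_i=Q_X1=4\pi=\int_\SS 1\,d\omega$, the modes $1\le n\le L$ directly), so only the tail survives:
\[
\|E\|^2=\sum_{n>L}(n+\tfrac12)^{-2s}a_n,\qquad a_n:=\sum_{k=-n}^n\Big(\sum_iw_iY_{n,k}(x_i)\Big)^2\ge0.
\]
Positivity of the weights, Cauchy--Schwarz in $i$, and the addition theorem $\sum_kY_{n,k}(x)^2=\tfrac{2n+1}{4\pi}$ give the crude bound $a_n\le(\sum_iw_i)\sum_iw_i\sum_kY_{n,k}(x_i)^2=4\pi(2n+1)$.

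The difficulty is that inserting this crude bound and summing the convergent tail $\sum_{n>L}(n+\tfrac12)^{1-2s}\sim L^{2-2s}$ yields only $\|E\|\lesssim L^{1-s}$, which is off by a full power of $L$ from the claim. This lost power is exactly the Sobolev-embedding (Nikolskii) cost, and any argument routed through the supremum norm or best polynomial approximation runs into the same barrier. Recovering the sharp exponent requires exploiting a gain of a factor $N\sim L^2$ coming from cancellation in the sums $\sum_iw_iY_{n,k}(x_i)$. The mechanism I would use is a Christoffel-type weight bound: applying exactness to the square of the degree-$\lfloor L/2\rfloor$ reproducing kernel $K_M(x,y)=\sum_{n\le M}\tfrac{2n+1}{4\pi}P_n(x\cdot y)$, with $M=\lfloor L/2\rfloor$, gives $\sum_jw_jK_M(x_i,x_j)^2=K_M(x_i,x_i)=(M+1)^2/(4\pi)$, and keeping only the term $j=i$ yields $w_i\le 4\pi/(M+1)^2\le C/L^2$ for every node. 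Consequently $\sum_iw_i^2\le(\max_iw_i)\sum_iw_i\le C/L^2$.

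It then remains to bound the quadratic form $\|E\|^2=\sum_{i,j}w_iw_j\Phi(x_i\cdot x_j)$, where $\Phi(t)=\sum_{n>L}(n+\tfrac12)^{-2s}\tfrac{2n+1}{4\pi}P_n(t)$ is the high-frequency part of $K_s$ (the low part being annihilated by exactness exactly as above). The diagonal is immediately under control: $\Phi(1)\le CL^{2-2s}$, so $\sum_iw_i^2\,\Phi(1)\le CL^{2-2s}\cdot L^{-2}=CL^{-2s}$, precisely the target rate. The main obstacle is the off-diagonal sum $\sum_{i\ne j}w_iw_j\Phi(x_i\cdot x_j)$: the trivial bound $|\Phi|\le\Phi(1)$ discards all the gain and returns $L^{1-s}$, so one genuinely needs the decay of $\Phi$ away from the diagonal. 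Here I would combine the oscillation estimates for $P_n$ (so that $\Phi(x\cdot y)$ decays in the geodesic distance on the scale $1/L$) with the weight bound $w_i\le C/L^2$ and a packing argument for the nodes, i.e.\ a Marcinkiewicz--Zygmund-type estimate, to show the off-diagonal contribution is also $O(L^{-2s})$. This coupling of positivity, exactness, and node distribution is the delicate point; once it is in place, taking square roots gives $\|E\|\le C/L^s$, and everything else is summation of convergent series in $n$.
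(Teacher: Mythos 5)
First, note that the paper does not prove this statement at all: it is imported verbatim from \cite[Theorem 11]{hesse15}, so there is no in-paper argument to compare against, and your attempt has to stand on its own.

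Your setup is correct and the easy pieces are handled properly: the identification of the worst-case error with the dual norm in the reproducing kernel Hilbert space $\sobs{s}$ (valid since $s>1$), the annihilation of all modes $n\le L$ by exactness, the Christoffel-type bound $w_i\le 4\pi/(M+1)^2\le C L^{-2}$ obtained by applying exactness to $K_M(x_i,\cdot)^2$ with $M=\lfloor L/2\rfloor$, and the resulting control of the diagonal contribution $\sum_i w_i^2\,\Phi(1)\le CL^{-2s}$. You also correctly diagnose that the crude bound only yields $L^{1-s}$. However, there is a genuine gap: the off-diagonal sum $\sum_{i\ne j}w_iw_j\Phi(x_i\cdot x_j)$ is not an auxiliary technicality but the entire content of the theorem, and the plan you sketch for it does not go through as stated. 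Two concrete obstructions: (i) the tail kernel $\Phi(t)=\sum_{n>L}(n+\tfrac12)^{-2s}\tfrac{2n+1}{4\pi}P_n(t)$ has a \emph{sharp} spectral cutoff at $n=L$, and such kernels are of Dirichlet type — they do not decay rapidly away from the diagonal on the scale $1/L$. One must first replace the sharp tail by a smooth dyadic (Littlewood--Paley/needlet-type) decomposition before any localization estimate is available. (ii) A ``packing argument for the nodes'' presupposes some separation of the points of $X$, which is not assumed: the theorem holds for arbitrary node sets admitting a positive-weight exact rule, and nodes may cluster arbitrarily. What is actually needed is a doubling or Marcinkiewicz--Zygmund property of the discrete measure $\sum_i w_i\delta_{x_i}$ on caps of radius $\sim 1/L$ (e.g.\ $\sum_{x_i\in\mathcal{C}}w_i\le C\max(r,1/L)^2$ for a cap $\mathcal{C}$ of radius $r$), which itself must be extracted from exactness by testing against well-localized nonnegative polynomial kernels — this is the delicate step in the proofs of Hesse--Sloan and of Brandolini et al., and it is absent from your argument.

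In short: the skeleton is the right one and every step you actually carry out is correct, but the claim ``the off-diagonal contribution is also $O(L^{-2s})$'' is asserted rather than proved, and the route you indicate for it needs to be replaced by a smooth frequency decomposition of $\Phi$ combined with an MZ-type cap estimate for the weights; without that, the argument only delivers the suboptimal rate $L^{1-s}$.
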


\subsection{Spherical basis functions and interpolation}\label{sec:spherbasis}

In this section, we briefly recapitulate some definitions and results for spherical basis functions derived from classical radial basis functions, as indicated, e.g., in \cite{legnarwar06,LeGia2017,legia10}.

\begin{defi}\label{def:dilatioylkernels}
	Let $\psi:[0,\infty)\to\R$ be compactly supported with $\supp(\psi)\subset[0,1]$ such that $\psi(|\cdot|):\R^3\to\R$ is a positive definite radial basis function. Then we define $\Psi:\SS\times\SS\to\R$ via $\Psi(x,y)=\psi(|x-y|)$ and introduce scaled versions, for $\delta>0$, via
	\begin{align}
		\Psi_\delta(x,y)=\frac{1}{\delta^2}\Psi\left(\frac{x}{\delta},\frac{y}{\delta}\right),\qquad x,y\in\SS.
	\end{align}
	We call these restrictions to the sphere \emph{spherical basis functions}. Often, we use the notation $\Psi_x$ and $\Psi_{\delta,x}$ to more concisely express the functions $\Psi(x,\cdot)$ and $\Psi_\delta(x,\cdot)$, respectively. 
	
	Further, observing that $\Psi(x,y)=\sum_{n=0}^{\infty}\sum_{k=-n}^{n}\hat{\Psi}_n\,Y_{n,k}(x)Y_{n,k}(y)$ for adequate coefficients $\hat{\Psi}_n\in[0,\infty)$, we define a norm $\|\cdot\|_\Psi$ via
	\begin{align}\label{eqn:nativenorm}
		\|f\|_{\Psi}^2 = \sum_{n=0}^\infty\sum_{k=-n}^{n}\frac{ \hat{f}_{n,k}^2 }{\hat{\Psi}_n},\qquad f\in C^\infty(\SS).
	\end{align}
	The native space corresponding to $\Psi$ is then given as the closure of $C^\infty(\SS)$ with respect to this norm $\|\cdot\|_\Psi$. Analogous definitions hold for $\Psi_\delta$, where the involved coefficients are denoted by $\left(\Psi_\delta\right)^\wedge_n$.
\end{defi}

A popular choice for the underlying $\psi$ in Definition \ref{def:dilatioylkernels} that we will use in our numerical examples is, e.g., the Wendland function (cf. \cite{wendland95,wendland05})
\begin{align}
	\psi(r)&=(1-r)_+^4(4r+1),\label{eqn:wf1}
\end{align}
which corresponds to the native space $H^{\frac{5}{2}}(\SS)$, although various other choices are equally valid. A closed-form representation of its Legendre coefficients is provided in \cite{hubjae21}. 

\begin{prop}{\cite[Thm. 4.7]{hubjae21}} Let $\Psi_\delta$ be given as in Definition \ref{def:dilatioylkernels}, and the underlying $\psi$ be given as in \eqref{eqn:wf1}. Then, $\Psi_\delta$ has the following spherical harmonic expansion
\begin{align}
	\Psi_\delta(x,y)=\sum_{n=0}^{\infty}  \sum_{k=-n}^{n}\hat{\Psi}^\delta_n\,\ynk(x)\ynk(y)=\sum_{n=0}^{\infty}  \frac{2n+1}{4\pi}\hat{\Psi}^\delta_n\, P_n(x\cdot y),
\end{align}
with coefficients
\begin{align}
\hat{\Psi}^\delta_n=\frac{\pi}{7}\,{}_3F_2\left(-n,n+1,\frac{5}{2};4,\frac{9}{2};\frac{\delta^2}{4}\right),
\end{align}
where ${}_3F_2$ denotes the general hypergeometric function.
\end{prop}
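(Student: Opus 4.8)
The plan is to reduce the whole statement to a single one-dimensional integral and then recognize the resulting terminating power series as the stated ${}_3F_2$. Since $\Psi_\delta(x,y)=\delta^{-2}\psi(|x-y|/\delta)$ depends on $x$ and $y$ only through $t=x\cdot y$ (because $|x-y|^2=2(1-t)$ on the sphere), it is a zonal kernel and therefore admits a Legendre expansion $\Psi_\delta(x,y)=\sum_{n=0}^\infty \tfrac{2n+1}{4\pi}\hat{\Psi}^\delta_n P_n(x\cdot y)$, where the second equality in the statement is just the addition theorem $\sum_{k=-n}^n Y_{n,k}(x)Y_{n,k}(y)=\tfrac{2n+1}{4\pi}P_n(x\cdot y)$. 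Applying the orthogonality relation $\int_{-1}^1 P_nP_m\,dt=\tfrac{2}{2n+1}\delta_{nm}$ to this expansion isolates the coefficients as
\[
	\hat{\Psi}^\delta_n=2\pi\int_{-1}^1 \frac{1}{\delta^2}\,\psi\!\left(\frac{\sqrt{2(1-t)}}{\delta}\right)P_n(t)\,dt .
\]

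First I would substitute $u=\sqrt{2(1-t)}/\delta$, i.e.\ $t=1-\tfrac{\delta^2u^2}{2}$, which (for $\delta\le 2$, so that the chordal support constraint coincides with $\supp\psi=[0,1]$) turns the coefficient into $\hat{\Psi}^\delta_n=2\pi\int_0^1 \psi(u)\,u\,P_n\!\big(1-\tfrac{\delta^2u^2}{2}\big)\,du$. Next I would expand the Legendre polynomial using the classical identity $P_n(1-2z)={}_2F_1(-n,n+1;1;z)$ with $z=\tfrac{\delta^2u^2}{4}$. Since this is a finite sum of degree $n$, interchanging it with the integral is trivially justified, and one obtains
\[
	\hat{\Psi}^\delta_n=2\pi\sum_{k=0}^n\frac{(-n)_k(n+1)_k}{(k!)^2}\left(\frac{\delta^2}{4}\right)^{\!k} M_k,\qquad M_k:=\int_0^1 (1-u)^4(4u+1)\,u^{2k+1}\,du .
\]

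The heart of the matter is evaluating the moments $M_k$ in closed form and matching them against the ${}_3F_2$. Writing $(4u+1)u^{2k+1}=4u^{2k+2}+u^{2k+1}$ and using the Beta integral $\int_0^1(1-u)^4u^m\,du=\tfrac{24\,m!}{(m+5)!}$, a short algebraic collapse of the two resulting fractions yields the compact form $M_k=\tfrac{120}{(2k+2)(2k+4)(2k+5)(2k+6)(2k+7)}$. Comparing the claimed series $\tfrac{\pi}{7}\,{}_3F_2(-n,n+1,\tfrac52;4,\tfrac92;\tfrac{\delta^2}{4})$ term by term against the display above, it suffices to verify the single identity $M_k=\tfrac{k!}{14}\,\tfrac{(5/2)_k}{(4)_k(9/2)_k}$. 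I would do this by induction: the base case $M_0=\tfrac{1}{14}$ is immediate, and from the closed form the ratio $\tfrac{M_{k+1}}{M_k}=\tfrac{(2k+2)(2k+5)}{(2k+8)(2k+9)}$ coincides with $\tfrac{(k+1)(k+5/2)}{(k+4)(k+9/2)}$, which is precisely the ratio of consecutive terms of the hypergeometric series with upper parameter $\tfrac52$ and lower parameters $4,\tfrac92$.

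I expect the only genuine obstacle to be the bookkeeping in this moment computation: collapsing the two Beta integrals into the single rational expression for $M_k$ and confirming that its term ratio reproduces exactly the $(\tfrac52;4,\tfrac92)$ parameter pattern. Everything else (zonality, the coefficient formula, the ${}_2F_1$ expansion of $P_n$, and the interchange of a finite sum with the integral) is routine. One point worth flagging is the restriction $\delta\le 2$: for larger $\delta$ the chordal distance never reaches $\delta$, the upper limit in $u$ becomes $2/\delta<1$, and the Beta-integral moments would have to be replaced by incomplete ones; in the localized regime relevant to the approximation results of this paper this case does not occur.
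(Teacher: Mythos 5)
The paper does not prove this proposition at all --- it is imported verbatim as a citation of \cite[Thm.~4.7]{hubjae21} --- so there is no internal proof to compare against; what you have written is a self-contained derivation of the cited result, and it is correct. Your reduction (zonality via $|x-y|^2=2(1-x\cdot y)$, the addition theorem for the second equality, Legendre orthogonality to isolate $\hat{\Psi}^\delta_n=2\pi\int_{-1}^1\delta^{-2}\psi(\sqrt{2(1-t)}/\delta)P_n(t)\,dt$, the substitution $t=1-\delta^2u^2/2$, and the terminating ${}_2F_1$ form of $P_n$) is the standard route, and the decisive moment computation checks out: $M_k=4\cdot\frac{24(2k+2)!}{(2k+7)!}+\frac{24(2k+1)!}{(2k+6)!}=\frac{24(2k+1)!(10k+15)}{(2k+7)!}=\frac{120}{(2k+2)(2k+4)(2k+5)(2k+6)(2k+7)}$, with $M_0=\tfrac{1}{14}$ and term ratio $\frac{(2k+2)(2k+5)}{(2k+8)(2k+9)}=\frac{(k+1)(k+5/2)}{(k+4)(k+9/2)}$, which is exactly the $(\tfrac52;4,\tfrac92)$ pattern needed to match $\tfrac{\pi}{7}\,{}_3F_2$ after cancelling the common factor $(-n)_k(n+1)_k/k!$ and one power of $k!$. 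Your caveat about $\delta\le 2$ is also well placed: the cited closed form is stated for the scaled kernels in the localized regime, and for $\delta>2$ the upper limit of the $u$-integral drops below the support of $\psi$, so the complete Beta moments would indeed be replaced by incomplete ones; since the paper only ever uses $\delta_n=\nu h_n\to 0$, this restriction is harmless.
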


Next, we state some known properties for interpolation via spherical basis functions.

\begin{prop}{\cite[Theorem 3.3]{legnarwar06}}\label{prop:zerolemma} Let $X\subset \SS$ have meshwidth $h_X$, and let $s>1$. Then there exists a constant $C>0$, depending  on $s$, such that, for all $f$ in $\sobs{s}$ with $f|_X=0$,  it holds
	\begin{align}
		\|f\|_{L^2(\SS)}\leq C h_X^s\|f\|_{\sobs{s}}.
	\end{align}
\end{prop}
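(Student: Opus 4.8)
The plan is to reduce the spherical statement to a classical Euclidean ``zeros lemma'' on planar Lipschitz domains via a partition of unity and the stereographic projection, using the paper's own transfer tools. First I would dispose of the case $h_X\geq h_0$ for a fixed threshold $h_0>0$ to be chosen below: since $\SS$ has diameter $2$ we always have $h_X\leq 2$, and $\|f\|_{L^2(\SS)}\leq\|f\|_{\sobs{s}}$ holds unconditionally, so for $h_X\geq h_0$ the claimed bound follows with constant $h_0^{-s}$. Hence it suffices to treat $h_X\leq h_0$. Throughout, the embedding $C^0(\SS)\subset\sobs{s}$ for $s>1$ guarantees that the pointwise constraint $f|_X=0$ is meaningful.

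Next I would localize. Fix, once and for all, a finite cover of $\SS$ by spherical caps $\Omega_1,\dots,\Omega_M$, each a Lipschitz domain with closure contained in $\SS\setminus\{\mathbf{p}_j\}$ for a suitable pole $\mathbf{p}_j$, together with a smooth partition of unity $\chi_1,\dots,\chi_M$ subordinate to it, so that $\chi_j\in C_0^\infty(\Omega_j)$ and $\sum_j\chi_j\equiv 1$; the number $M$ and the $\chi_j$ depend only on $\SS$, not on $X$. Writing $f=\sum_j\chi_j f$ gives $\|f\|_{L^2(\SS)}\leq\sum_j\|\chi_j f\|_{L^2(\Omega_j)}$. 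The two decisive features of this splitting are that each $\chi_j f$ lies in $H^s(\Omega_j)$, i.e.\ is supported in $\Omega_j$, and that it \emph{still vanishes on $X$}, since $(\chi_j f)(x_i)=\chi_j(x_i)f(x_i)=0$ for every $x_i\in X$. Moreover, by the Banach algebra property of Proposition~\ref{thm:smoothnessofmultiplication} (valid as $s>1$), $\|\chi_j f\|_{\sobs{s}}\leq C\|\chi_j\|_{\sobs{s}}\|f\|_{\sobs{s}}=C_j\|f\|_{\sobs{s}}$, with $C_j$ independent of $f$ and $X$.

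For each fixed $j$ I would then transfer the local piece to the plane via $\sgp$ (with pole $\mathbf{p}_j$). Since $\chi_j f$ is supported in $\Omega_j$ with $\overline{\Omega_j}\subset\SS\setminus\{\mathbf{p}_j\}$, the norm equivalences of Proposition~\ref{prop:propertyofstereographicprojection}, applied with smoothness $0$ and with smoothness $s$, yield $\|\chi_j f\|_{L^2(\Omega_j)}\leq C\|\sgp(\chi_j f)\|_{L^2(\R^2)}$ and $\|\sgp(\chi_j f)\|_{H^s(\R^2)}\leq C\|\chi_j f\|_{\sobs{s}}$. The projected node set $Y_j=\sgp(X\cap\Omega_j)$ has, by the bi-Lipschitz estimate \eqref{eqn:upboundstereopt}, fill distance comparable to $h_X$ relative to $U_j=\sgp(\Omega_j)$, and $\sgp(\chi_j f)$ vanishes on $Y_j$. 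At this point one invokes the classical Euclidean zeros lemma: for a planar domain satisfying an interior cone condition and $s>1=d/2$, any $u\in H^s$ vanishing on a set of fill distance $h$ obeys $\|u\|_{L^2}\leq C h^s\|u\|_{H^s}$. Chaining these three ingredients gives $\|\chi_j f\|_{L^2(\Omega_j)}\leq C h_X^s\|\chi_j f\|_{\sobs{s}}\leq C\,C_j\,h_X^s\,\|f\|_{\sobs{s}}$, and summing over the finitely many $j$ produces the assertion.

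The genuine content sits in the Euclidean zeros lemma, where the exact power $h^s$ originates; I would prove it by the standard Bramble--Hilbert route: cover $U_j$ by balls of radius $\sim h_X$, each satisfying a cone condition and containing a node of $Y_j$, so that on each ball the averaged Taylor polynomial $Qu$ of degree $<s$ satisfies $\|u-Qu\|_{L^2}\leq C h_X^s|u|_{H^s}$, while the vanishing of $u$ at the node together with a Markov/Nikolskii inequality controls the polynomial part $Qu$ in $L^2$ at the same order. The main obstacle, and the point requiring care, is precisely this local argument together with the fill-distance bookkeeping across patches: one must ensure that for $h_X\leq h_0$ the nodes $X$ genuinely remain $h_X$-dense on the support of each $\chi_j f$ \emph{within} $\Omega_j$, so that the nearest node to a relevant point does not escape the chart. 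It is this requirement that fixes the threshold $h_0$ in terms of the collar between $\supp\chi_j$ and $\partial\Omega_j$, and closes the argument.
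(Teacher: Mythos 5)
First, note that the paper itself does not prove this proposition: it is quoted verbatim from \cite[Theorem 3.3]{legnarwar06}, and the authors remark immediately afterwards that it is stated only for illustration and is not used later (they instead work in the plane via \cite[Theorem 2.12]{narwarwen05}). So there is no in-paper proof to compare against, and your proposal must be judged on its own. Your outer reduction is sound and entirely in the spirit of the paper's own toolkit: the trivial treatment of $h_X\geq h_0$, the partition of unity into finitely many caps, the observation that $\chi_j f$ still vanishes on $X$ and obeys $\|\chi_j f\|_{\sobs{s}}\leq C_j\|f\|_{\sobs{s}}$ by the Banach algebra property (Proposition~\ref{thm:smoothnessofmultiplication}), the transfer to $\R^2$ by Proposition~\ref{prop:propertyofstereographicprojection}, and the collar bookkeeping that keeps the relevant nodes inside each chart are all correct, provided you apply the Euclidean zeros lemma on a slightly shrunk Lipschitz domain containing $\sgp(\supp\chi_j)$ rather than on all of $\sgp(\Omega_j)$ (you flag this yourself). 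Had you stopped there and cited the Euclidean zeros lemma as a black box, exactly as the paper does with \cite[Theorem 2.12]{narwarwen05} in its proof of Theorem~\ref{thm:l2sigmaapprox}, the proof would be complete.

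The one genuine gap sits inside your sketch of the Euclidean zeros lemma itself. You claim that on each ball of radius $\sim h_X$ the vanishing of $u$ at \emph{the} node, combined with a Markov/Nikolskii inequality, controls the averaged Taylor polynomial $Qu$ in $L^2$. This fails as soon as $Qu$ has degree at least $1$, which is already the case for every admissible $s>1$ in dimension $d=2$: the Bramble--Hilbert remainder bound $\|u-Qu\|_{L^2}\leq Ch^s|u|_{H^s}$ requires $Q$ to reproduce polynomials of degree $\lceil s\rceil-1\geq 1$, and a single zero says nothing about an affine function. The correct local argument requires the nodes in each cell to form a \emph{norming set} for the relevant polynomial space; this is why the standard statements work with cells of radius proportional to $m^2 h$ (with $m$ the local polynomial degree), impose a fill-distance threshold depending on $m$, and first derive a local $L^\infty$ bound of the form $\|u\|_{L^\infty(B)}\leq Ch^{s-d/2}|u|_{H^s(B)}$ before integrating. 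With that repair -- or simply by invoking \cite{narwarwen05} for the planar lemma -- your argument closes.
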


We do not actually need the spherical estimate from Proposition \ref{prop:zerolemma} in the later course of this paper, it is stated mainly for illustration purposes. For the proof of Theorem \ref{thm:l2sigmaapprox}, we will infact use the stereogrpahic projection in order to retreat to the Euclidean setup and rely on related results from \cite{narwarwen05,towwen13}. 

\begin{prop}\label{prop:projectioninnativespace}
	Let $\Psi$ be as in Definition \ref{def:dilatioylkernels} with native space $H^{s}(\SS)$, for some $s>1$. Moreover, let $X\subset \SS$ and denote by $I_{X,\Psi}:H^{s}(\SS)\to \textnormal{span}\{\Psi_{x}: x\in X\}$ the interpolation operator that is characterized by $I_{X,\Psi}(f)|_X=f|_X$. Then, it holds
	\begin{align*}
		\|f\|_{\Psi}^2 =\|f-I_{X,\Psi}(f)\|_{\Psi}^2+\|I_{X,\Psi}(f)\|_{\Psi}^2.
	\end{align*}
\end{prop}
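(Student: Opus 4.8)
The plan is to exploit the reproducing kernel structure of the native space. First I would recall that the native space associated with $\Psi$ carries an inner product $\langle\cdot,\cdot\rangle_\Psi$ (the polarization of the norm $\|\cdot\|_\Psi$ defined via the Legendre coefficients $\hat\Psi_n$ in Definition \ref{def:dilatioylkernels}) for which the spherical basis function $\Psi_x=\Psi(x,\cdot)$ is the reproducing kernel, i.e.\ $\langle f,\Psi_x\rangle_\Psi=f(x)$ for every $f$ in the native space and every $x\in\SS$. This reproducing property is the engine of the whole argument and follows directly from the spectral expansion $\Psi(x,y)=\sum_{n,k}\hat\Psi_n Y_{n,k}(x)Y_{n,k}(y)$ together with the definition \eqref{eqn:nativenorm}: pairing $f$ against $\Psi_x$ in $\langle\cdot,\cdot\rangle_\Psi$ cancels one factor of $\hat\Psi_n$ and reproduces the Fourier series of $f$ evaluated at $x$.

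The key step is then to show that the interpolation residual $f-I_{X,\Psi}(f)$ is $\|\cdot\|_\Psi$-orthogonal to the interpolant $I_{X,\Psi}(f)$. Since $I_{X,\Psi}(f)$ lies in $\textnormal{span}\{\Psi_x:x\in X\}$, it suffices to check orthogonality against each generator $\Psi_{x}$ with $x\in X$. Using the reproducing property,
\begin{align*}
	\langle f-I_{X,\Psi}(f),\Psi_x\rangle_\Psi=(f-I_{X,\Psi}(f))(x)=f(x)-f(x)=0,
\end{align*}
where the last equality is exactly the interpolation condition $I_{X,\Psi}(f)|_X=f|_X$. By linearity this extends to orthogonality against the entire span, hence against $I_{X,\Psi}(f)$ itself. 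The claimed identity is then just the Pythagorean theorem applied to the orthogonal decomposition $f=(f-I_{X,\Psi}(f))+I_{X,\Psi}(f)$ in the native-space inner product.

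The only genuine subtlety, rather than a true obstacle, is justifying that the reproducing property holds and that $I_{X,\Psi}(f)$ is well-defined in the native space. Well-definedness follows because $\Psi(|\cdot|)$ is a positive definite radial basis function, so the interpolation (Gram) matrix $\big(\langle\Psi_{x_i},\Psi_{x_j}\rangle_\Psi\big)_{i,j}=\big(\Psi(x_i,x_j)\big)_{i,j}$ is positive definite and the interpolation problem is uniquely solvable. One should also note that the native space is contained in a Sobolev space $H^s(\SS)$ with $s>1$, so point evaluation is a bounded functional and all the expressions $f(x)$ make sense. Given these standard facts about reproducing kernel Hilbert spaces, the computation above is immediate and no lengthy estimates are needed.
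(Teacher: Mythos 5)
Your proof is correct and is exactly the standard reproducing-kernel argument that the paper implicitly relies on: Proposition \ref{prop:projectioninnativespace} is stated there without proof, as a known fact from the radial/spherical basis function literature. The computation $\langle f,\Psi_x\rangle_\Psi=\sum_{n,k}\hat{f}_{n,k}Y_{n,k}(x)=f(x)$ from \eqref{eqn:nativenorm}, the resulting orthogonality of the residual to $\textnormal{span}\{\Psi_x:x\in X\}$, and the Pythagorean conclusion are all sound, and your remarks on well-posedness (positive definiteness of the Gram matrix, $\hat{\Psi}_n>0$ by Proposition \ref{prop:spectraofdilatedkernel}, and boundedness of point evaluation since the native space is $H^s(\SS)$ with $s>1$) cover the only points that need care.
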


Some useful results on the spectral behaviour of the scaled spherical basis functions and on the boundedness of the corresponding interpolation operator are the following.

\begin{prop}{\cite[Theorem 6.2]{legia10}}\label{prop:spectraofdilatedkernel}
		Let $\Psi$ be as in Definition \ref{def:dilatioylkernels} where $\psi(|\cdot|)$ is a radial basis funtion in $\R^3$ with native space $H^{s+\nicefrac{1}{2}}(\R^3)$ for some $s>1$, then $\Psi$ is a spherical basis function with native space $H^{s}(\SS)$. Moreover, there exist constants $C\geq c>0$, depending  on $s$, such that  
	\begin{equation}\label{eqn:spectradecay}
		c(1+\delta n)^{-2s} \leq\left({\Psi}_{\delta}\right)^\wedge_n \leq C(1+\delta n)^{-2s}.
	\end{equation}
	As a result, there exists constants $\bar{C}\geq \bar{c}>0$, depending  on $s$, such that, for any $f$ in $H^{s}(\SS)$, 
	\begin{equation}
		\bar{c}\|f\|_{\Psi_{\delta}} \leq \|f\|_{\sobs{s}}\leq \bar{C} \delta^{-s} \|f\|_{\Psi_{\delta}}.
	\end{equation}
\end{prop}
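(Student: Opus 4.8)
\textbf{Proof proposal for Proposition \ref{prop:spectraofdilatedkernel}.}

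The plan is to transfer the spectral decay estimate \eqref{eqn:spectradecay} from the Euclidean native space $H^{s+\nicefrac{1}{2}}(\R^3)$ to the sphere and then read off the equivalence of norms. First I would recall that a positive definite radial basis function $\psi(|\cdot|)$ on $\R^3$ with native space $H^{s+\nicefrac{1}{2}}(\R^3)$ is characterized by its Fourier transform $\widehat{\psi}(\xi)$ satisfying a two-sided decay of the form $c(1+|\xi|^2)^{-(s+\nicefrac{1}{2})}\leq \widehat{\psi}(\xi)\leq C(1+|\xi|^2)^{-(s+\nicefrac{1}{2})}$; this is the standard characterization of Sobolev-type native spaces (as in \cite{legia10,wendland05}). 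The key step is then to relate the Legendre coefficients $\hat{\Psi}_n$ of the restriction $\Psi(x,y)=\psi(|x-y|)$ to samples of $\widehat{\psi}$ at the integer frequencies $n+\nicefrac{1}{2}$. For the unscaled kernel, a Funk--Hecke type argument together with the asymptotics of the relevant Bessel/Gegenbauer integrals yields $\hat{\Psi}_n \asymp \widehat{\psi}(n+\nicefrac12)\asymp (1+n)^{-2s}$, i.e. the spherical smoothness exponent drops by $\nicefrac12$ relative to the Euclidean one, which is exactly the assertion that the native space of $\Psi$ is $H^s(\SS)$.

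Next I would handle the scaling. Under the dilation $\Psi_\delta(x,y)=\delta^{-2}\Psi(x/\delta,y/\delta)$, the Euclidean Fourier transform rescales as $\widehat{\psi_\delta}(\xi)=\widehat{\psi}(\delta\xi)$ up to the normalizing power of $\delta$ absorbed into the definition, so that at frequency $n$ one obtains $\bigl(\Psi_\delta\bigr)^\wedge_n \asymp (1+(\delta n)^2)^{-s}\asymp (1+\delta n)^{-2s}$, which is precisely \eqref{eqn:spectradecay}. I would make the substitution $\xi \mapsto \delta n$ explicit and track that the constants $C\geq c>0$ depend only on $s$ and not on $\delta$, since the two-sided bound on $\widehat{\psi}$ is uniform in its argument.

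Finally, the norm equivalence follows by comparing \eqref{eqn:nativenorm} with \eqref{eqn:innerprodhs}. Writing both norms in spectral domain, for $f\in H^s(\SS)$ we have
\begin{align}
\|f\|_{\Psi_\delta}^2=\sum_{n=0}^\infty\sum_{k=-n}^n \frac{\hat{f}_{n,k}^2}{\bigl(\Psi_\delta\bigr)^\wedge_n},
\qquad
\|f\|_{\sobs{s}}^2=\sum_{n=0}^\infty\sum_{k=-n}^n \bigl(n+\tfrac12\bigr)^{2s}\hat{f}_{n,k}^2.
\end{align}
Inserting the two-sided bound \eqref{eqn:spectradecay} gives $\bigl(\Psi_\delta\bigr)^{\wedge,-1}_n \asymp (1+\delta n)^{2s}$, and it remains to compare $(1+\delta n)^{2s}$ with $(n+\nicefrac12)^{2s}$ termwise. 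The lower bound $\bar{c}\|f\|_{\Psi_\delta}\leq \|f\|_{\sobs{s}}$ is immediate since $(1+\delta n)\leq(1+\delta)(1+n)$ for $\delta\leq 1$, so the native-space norm is dominated by the Sobolev norm up to a $\delta$-independent constant; conversely $(n+\nicefrac12)\leq \delta^{-1}(1+\delta n)$ (up to a fixed factor) produces the factor $\delta^{-s}$ in the upper bound $\|f\|_{\sobs{s}}\leq \bar{C}\delta^{-s}\|f\|_{\Psi_\delta}$. I expect the main obstacle to be the first step: rigorously pinning down the asymptotic relation between the Legendre coefficients $\hat{\Psi}_n$ of the restricted kernel and the Euclidean Fourier transform $\widehat{\psi}$, including the clean half-integer shift of the smoothness index, since this requires care with the Funk--Hecke integral and the uniformity of the implied constants; the scaling and the final norm comparison are then routine once the termwise spectral bounds are in hand.

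\qed
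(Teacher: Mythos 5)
The paper offers no proof of this proposition: it is quoted directly from \cite[Theorem 6.2]{legia10}, so the only comparison available is with the argument in that reference, and your outline follows essentially the same route (Fourier characterization of the Euclidean native space, restriction asymptotics for the Legendre coefficients with the half-power loss of smoothness, dilation, and a termwise spectral comparison for the norm equivalence). The final norm-equivalence step is carried out correctly and completely, including the origin of the factor $\delta^{-s}$. One bookkeeping point in the dilation step deserves more care than your sketch gives it: with the normalization $\Psi_\delta(x,y)=\delta^{-2}\Psi(x/\delta,y/\delta)$ of Definition \ref{def:dilatioylkernels}, the three-dimensional Fourier transform of $\delta^{-2}\psi(|\cdot|/\delta)$ is $\delta\,\widehat{\psi}(\delta\,\cdot)$, not $\widehat{\psi}(\delta\,\cdot)$, so naively sampling at $n+\nicefrac{1}{2}$ would give $\delta(1+\delta n)^{-2s-1}$ rather than the asserted $(1+\delta n)^{-2s}$. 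The restriction-to-the-sphere integral contributes, in the scaled setting, a compensating factor of order $\delta^{-1}(1+\delta n)$ relative to naive sampling; the $\delta^{-1}$ cancels the normalization factor and the $(1+\delta n)$ is the half-integer shift, and only when both effects are tracked together do the constants come out independent of $\delta$. This is exactly the content of the technical lemmas preceding Theorem 6.2 in the cited work, and you rightly identify it as the main obstacle, so your proposal is a faithful sketch of the standard proof rather than a self-contained one.
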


\begin{prop}\label{Linftyboundedness}
	Let $\Psi$ satisfies the condition in Proposition \ref{prop:spectraofdilatedkernel}. Given $X\subset \SS$, let $c>1$ be such that $h_X< c q_X$, and let the scaling factor $\delta$ satisfy $\nu q_X<\delta< c\nu q_X$ for some $\nu\geq 1$. Then there exists a constant $C>0$, depending on $c$, $\nu$, and $s$, such that it holds for any $f$ in $H^{s}(\SS)$ that
	\begin{equation}\label{eqn:linftyestsph}
		\|I_{X, \Psi_{\delta}}(f) \|_{L^{\infty}(\SS)} \leq C \|f \|_{L^{\infty}(\SS)}.
	\end{equation}
\end{prop}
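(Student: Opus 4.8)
The plan is to reduce the assertion to a uniform bound on the Lebesgue constant of the interpolation operator. Writing $X=\{x_1,\dots,x_N\}$ and letting $\{u_j\}_{j=1}^N\subset\operatorname{span}\{\Psi_{\delta,x_i}:x_i\in X\}$ be the cardinal functions determined by $u_j(x_k)=\delta_{jk}$, any interpolant can be represented as $I_{X,\Psi_\delta}(f)=\sum_{j=1}^N f(x_j)\,u_j$, so that
\begin{align*}
	\|I_{X,\Psi_\delta}(f)\|_{L^\infty(\SS)}\leq \|f\|_{L^\infty(\SS)}\,\sup_{x\in\SS}\sum_{j=1}^N|u_j(x)|.
\end{align*}
It therefore suffices to bound the Lebesgue function $\Lambda_X(x)=\sum_{j=1}^N|u_j(x)|$ by a constant that is independent of $x$, of $X$, and of $\delta$ within the stated range.

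The core of the argument is a pointwise decay estimate for the cardinal functions. Expanding $u_j=\sum_i (A^{-1})_{ji}\,\Psi_{\delta,x_i}$ in terms of the interpolation matrix $A=(\Psi_\delta(x_i,x_k))_{i,k}$, I would first use the spectral bounds \eqref{eqn:spectradecay} from Proposition \ref{prop:spectraofdilatedkernel} together with the separation distance $q_X$ and the scaling $\delta$ comparable to $q_X$ to obtain a uniform lower bound on the smallest eigenvalue of $A$; this is a standard Narcowich--Ward type estimate and shows that $A$ is uniformly well conditioned. The second and decisive ingredient is to transfer the off-center decay of the individual scaled kernels --- recall that $\psi$ is supported in $[0,1]$, so $\Psi_{\delta,x_i}(x)$ vanishes once $|x-x_i|$ exceeds a fixed multiple of $\delta$, while \eqref{eqn:spectradecay} controls its size in general --- to the cardinal functions themselves. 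Concretely, I aim for an estimate of the form
\begin{align*}
	|u_j(x)|\leq C\,\Phi\!\left(\frac{|x-x_j|}{\delta}\right),\qquad x\in\SS,
\end{align*}
with a rapidly decaying profile $\Phi$ and a constant $C$ depending only on $c$, $\nu$, and $s$. The hypothesis $\nu q_X<\delta<c\nu q_X$ is exactly what makes this possible: it ties the localization length of the kernel to the separation distance, so that each center interacts with only boundedly many neighbours and the matrix $A$ has fast off-diagonal decay indexed by the well-separated point set.

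To finish, fix $x\in\SS$ and group the centers into annuli $\{x_j:m\delta\leq|x-x_j|<(m+1)\delta\}$. By the quasi-uniformity $h_X<c\,q_X$ and the disjointness of the geodesic balls of radius $q_X$ about distinct centers, a packing argument bounds the number of centers in the $m$-th annulus by $C(1+m)$, using once more that $\delta$ is comparable to $q_X$. Summing the pointwise bound yields $\Lambda_X(x)\leq C\sum_{m\geq0}(1+m)\,\Phi(m)$, which is finite and independent of $x$, $X$, and $\delta$ provided $\Phi$ decays faster than a fixed polynomial rate, as the derived profile does. This delivers the uniform Lebesgue constant and hence \eqref{eqn:linftyestsph}.

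The main obstacle is the decay estimate for the cardinal functions in the second step, equivalently the off-diagonal decay of $A^{-1}$. This rests on the inverse-closedness of matrix algebras with decay indexed by separated points (in the spirit of Demko--Moss--Smith for banded matrices, or of Jaffard/Baskakov for algebraically decaying matrices), combined with the uniform spectral gap for $A$; establishing these rates in the spherical, scaled setting is the technically heaviest part. An alternative that fits the methodology used elsewhere in this paper would be to transfer the corresponding Euclidean stability estimate to $\SS$ via the stereographic projection and Proposition \ref{prop:propertyofstereographicprojection}, at the cost of localizing the interpolation problem away from the projection pole.
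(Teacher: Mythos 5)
Your argument is correct in outline, but it takes a genuinely different route from the paper. The paper's proof (Appendix \ref{app:cinftest}) is a short reduction to the Euclidean stability result \cite[Theorem 3.5]{towwen13}: it augments $X$ to a three-dimensional point set $\tilde{X}$ in a ball $\mathcal{B}_r\subset\R^3$ whose extra points keep distance greater than $c\nu q_X>\delta$ from $\SS$, extends $f$ to $\tilde f$ with $\|\tilde f\|_{L^\infty(\mathcal{B}_r)}\leq 2\|f\|_{L^\infty(\SS)}$, and exploits the compact support of $\psi$ so that the ball interpolant restricted to $\SS$ coincides with $I_{X,\Psi_\delta}(f)$; the Euclidean bound then transfers verbatim. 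Your proposal instead re-derives the stability mechanism on the sphere itself, via a uniform bound on the Lebesgue constant obtained from uniform conditioning of the interpolation matrix at stationary scaling $\delta\sim q_X$, off-diagonal decay of $A^{-1}$ (Demko--Moss--Smith/Jaffard type), and a packing argument. All the ingredients you name are the right ones and the plan would go through, but be aware that the ``technically heaviest part'' you defer is essentially the entire content of the Euclidean theorem the paper reuses, so executed in full your proof would be considerably longer; what it buys in return is a self-contained spherical argument and the stronger, more explicit conclusion that the Lebesgue constant $\sup_x\Lambda_X(x)$ is uniformly bounded, from which \eqref{eqn:linftyestsph} follows trivially. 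Your closing remark about transferring the Euclidean estimate is closer to the paper's spirit, except that the paper avoids the stereographic projection (and its pole issue) by embedding into $\R^3$ rather than projecting onto $\R^2$.
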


Proposition \ref{Linftyboundedness} is a simplified spherical version of  \cite[Theorem 3.5]{towwen13}. A proof can be found the Appendix \ref{app:cinftest}.

\subsection{Regularized fundamental solution}\label{sec:reggreen}

The function $G(\DeltaS;\cdot,\cdot):\{(x,y)\in\SS\times\SS:x\not=y\}\to\R$ given by
\begin{align}
	G(\DeltaS;x,y)=\frac{1}{4\pi}\ln(1-x\cdot y)+\frac{1}{4\pi}(1-\ln2).
\end{align}
is called fundamental solution with respect to $\DeltaS$ and is twice continuously differentiable on its domain. It satisfies 
\begin{equation}
	\DeltaSS{y} G(\DeltaS;x,y) = -\frac{1}{4\pi},\quad x\not=y,
\end{equation}
and it has the Fourier expansion, for $x\not=y$,
\begin{align}\label{eqn:seriesrepG}
	G(\DeltaS;x,y)=\sum_{n=1}^{\infty}\sum_{k=-n}^{n}\frac{1}{-n(n+1)}Y_{n,k}(x)Y_{n,k}(y)=\frac{1}{4\pi}\sum_{n=1}^{\infty}\frac{2n+1}{- n(n+1)}P_n(x\cdot y),
\end{align}
where $P_n$ denotes the Legendre polynomial of degree $n$ (see, e.g., \cite{freedenschreiner09}). Furthermore, Green's third formula yields for $f$ in $H^{3+\kappa}(\SS)$, with $\kappa>0$, that 
\begin{align}
	f(x)=\frac{1}{4\pi}\int_{\SS}f(y)d\omega(y) +\int_{\SS} G(\DeltaS;x,y) \DeltaSS{y}f(y)d\omega(y),\quad x\in\SS.
\end{align}
From \eqref{eqn:seriesrepG}, we can easily check, for fixed $x\in\SS$, that $G(\DeltaS;x,\cdot)$ lies in the Sobolev space $H^{s}(\SS)$ for any $s<1$ but not for $s\geq 1$. Regularization of the fundamental solution around its singularity leads to higher smoothness and is considered here for later use. A once continuously differentiable regularization will suffice for our purposes and a particular choice is presented in Definition \ref{def:regularizedgreenfunction}, although various other choices are possible as well. The concept is described and applied in more detail, e.g., in \cite{freedenschreiner06,freedengerhards10,freedengerhards12,freedenschreiner09}. 

\begin{defi}\label{def:regularizedgreenfunction}
For fixed $\rho\in(0,2)$, we define the once continuously differentiable regularized fundamental solution $G^{\rho}(\DeltaS;\cdot,\cdot):\SS\times\SS\to\R$ via
\begin{align}\label{eqn:reggreen}
	G^{\rho}(\DeltaS;x,y)=
	\begin{cases}
	\frac{1}{4\pi} \ln(1-x\cdot y)+\frac{1}{4\pi}(1-\ln 2),& x\cdot y\leq1-\rho, \\
	\frac{1-x\cdot y}{4\pi\rho} +\frac{1}{4\pi}(\ln \rho-\ln 2), &x\cdot y>1-\rho .
	\end{cases}
\end{align}
A later on useful notation will be the following, with $x,\bar{x}\in\SS$ fixed,
\begin{align}\label{eqn:grxx}
	G^\rho_{x,\bar{x}}=G^{\rho}(\DeltaS;x,\cdot)-G^{\rho}(\DeltaS;\bar{x},\cdot).
\end{align}

\end{defi}

\begin{rem}\label{rem:relrhor}
It is directly obvious that $G^{\rho}(\DeltaS:x,\cdot)$ and $ G(\DeltaS:x,\cdot)$ only differ on the spherical cap \begin{align}
	\mathcal{C}_\rho(x)=\{y\in\SS:x\cdot y> 1-\rho\}
\end{align}
with center $x$ and polar radius $\rho\in(0,2)$. Thus, $\DeltaS G^\rho_{x,\bar{x}}$ is locally supported in $\mathcal{C}_\rho(x)\cup \mathcal{C}_\rho(\bar{x})$. One should note that the spherical basis functions in Section \ref{sec:spherbasis} as well the mesh width $h_X$  in Section \ref{sec:cub} are defined with respect to the Euclidean distance, i.e., they relate to the spherical ball 
\begin{align}
	\mathcal{B}_r(x)\cap\SS=\{y\in\SS:|x-y|< r\}
\end{align}
with center $x$ and (Euclidean) radius $r>0$. The distinction between these two notions of spherical caps and spherical balls is solely made to pay tribute to the origins of the two different setups, not because they are conceptually different. In order for $\mathcal{C}_\rho(x)$ and $\mathcal{B}_r(x)\cap\SS$ to cover the same area, one needs to choose the radii to be related via $r=(2\rho)^{\frac{1}{2}}$. 
\end{rem}

As already mentioned, the regularization \eqref{eqn:reggreen} is only one of many possible choices. It has been chosen solely so that we can rely on some already known explict computations that lead to the following properties.

\begin{prop}\label{prop:regularizedgreenfunction}
Let the regularized fundamental solution $G^\rho(\DeltaS;\cdot,\cdot)$ be given as in Definition \ref{def:regularizedgreenfunction}.
\begin{enumerate}
\item It holds that 
\begin{align}
	\sup_{x\in\SS}\| G^{\rho}(\DeltaS:x,\cdot)\|_{\sobs{2}}
	\sim\sup_{x\in\SS}\| \DeltaS G^{\rho}(\DeltaS:x,\cdot)\|_{L^2(\SS)}\sim \rho^{-\tfrac{1}{2}},	
\end{align}
where the notation $f_\rho\sim g_\rho$ is meant in the sense that there exist constants $C\geq c>0$ such that $c\, g_\rho\leq f_\rho\leq C\,g_\rho$ for all sufficiently small $\rho>0$.
	\item For $f$ of class $C^{0}(\SS)$, it holds that (cf. \cite[Remark 8.19]{freedengerhards12})
\begin{align}
		\sup_{x\in\SS}\left| \int_{\SS}\left(G^{\rho}(\DeltaS:x,y)	-G(\DeltaS;x,y)\right)	f(y)d\omega(y)\right| &\leq C\rho\ln(\rho)\|f\|_{C^0(\SS)},
\\
	\sup_{x\in\SS}\left|  \int_{\SS}\left(\gradss{x} G^{\rho}(\DeltaS:x,y)-\gradss{x} G(\DeltaS;x,y)\right)	f(y)d\omega(y)\right| &\leq C\rho^{\frac{1}{2}}\|f\|_{C^0(\SS)},
\end{align}
with some constant $C>0$. 
\item $G^{\rho}(\DeltaS;\cdot,\cdot)$ has the following spherical harmonic expansion (cf. \cite[Lemma 4.7]{freedenschreiner09})
\begin{align}
	G^{\rho}(\DeltaS;x,y)=\sum_{n=0}^{\infty}  \sum_{k=-n}^{n}\hat{G}^\rho_n\,\ynk(x)\ynk(y)=\sum_{n=0}^{\infty}  \frac{2n+1}{4\pi}\hat{G}^\rho_n\, P_n(x\cdot y),
\end{align}
with coefficients
\begin{align}
\hat{G}^\rho_0&=\frac{1}{4}\rho,\label{eqn:ghn1}\\
\hat{G}^\rho_1&=-\frac{1}{2}+\frac{1}{4}\rho -\frac{1}{24}\rho^2,\label{eqn:ghn2} \\
\hat{G}^\rho_n&=\frac{P_{n+1}(1-\rho)-P_{n-1}(1-\rho)}{2n(n+1)(2n+1)}-\frac{2-\rho}{2n(n+1)}P_n(1-\rho)\label{eqn:ghn3}
 \\&\quad+\frac{1}{2\rho}\frac{P_n(1-\rho)-P_{n+2}(1-\rho)}{(2n+1)(2n+3)}-\frac{1}{2\rho}\frac{P_{n-2}(1-\rho)-P_{n}(1-\rho)}{(2n+1)(2n-1)},\quad n\geq2. \nonumber
\end{align}
\end{enumerate}
\end{prop}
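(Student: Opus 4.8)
The plan is to reduce every assertion to one-dimensional integrals in the variable $t=x\cdot y$, exploiting that $G^\rho(\Delta_\SS;x,\cdot)$ is zonal about $x$ and that $G^\rho$ and $G$ coincide off the cap $\mathcal{C}_\rho(x)$ (Remark \ref{rem:relrhor}). Three identities for a zonal integrand $F(x\cdot y)$ will do all the work: the Laplace--Beltrami operator acts as $\Delta_{\SS,y}F(x\cdot y)=\frac{d}{dt}[(1-t^2)F'(t)]|_{t=x\cdot y}$; viewed as a function of $x$, the surface gradient is $\nabla_{\SS,x}F(x\cdot y)=F'(x\cdot y)\,(y-(x\cdot y)x)$, so that $|\nabla_{\SS,x}F(x\cdot y)|=|F'(t)|\sqrt{1-t^2}$; and integration reduces via $\int_\SS g(x\cdot y)\,d\omega(y)=2\pi\int_{-1}^1 g(t)\,dt$.

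For Part 1, I would first compute $\Delta_\SS G^\rho$ explicitly: on $\{x\cdot y\le 1-\rho\}$ it equals the unregularized value $-\tfrac{1}{4\pi}$, while on the cap the linear profile $\tfrac{1-t}{4\pi\rho}$ gives $\Delta_\SS G^\rho(x,y)=\tfrac{x\cdot y}{2\pi\rho}$. Squaring and integrating with $d\omega=2\pi\,dt$, the part outside the cap stays bounded whereas the cap contributes $\tfrac{1}{4\pi^2\rho^2}\int_{1-\rho}^1 t^2\,2\pi\,dt\sim\tfrac{1}{2\pi\rho}$, yielding $\|\Delta_\SS G^\rho(x,\cdot)\|_{L^2(\SS)}\sim\rho^{-1/2}$ uniformly in $x$. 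The equivalence with $\|G^\rho(x,\cdot)\|_{H^2(\SS)}$ then follows from Remark \ref{rem:spectrepdeltas}, since $\|f\|_{H^2(\SS)}=\|(-\Delta_\SS+\tfrac14)f\|_{L^2(\SS)}$ and $\|G^\rho(x,\cdot)\|_{L^2(\SS)}$ remains bounded as $\rho\to0$; the two quantities thus differ by an $O(1)$ term that is negligible against the blow-up $\rho^{-1/2}$.

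For Part 2, since $G^\rho-G$ and $\nabla_{\SS,x}(G^\rho-G)$ vanish off the cap, I would bound $|f|\le\|f\|_{C^0(\SS)}$ and reduce to estimating $\int_{\mathcal{C}_\rho(x)}|G^\rho-G|\,d\omega$ and $\int_{\mathcal{C}_\rho(x)}|\nabla_{\SS,x}(G^\rho-G)|\,d\omega$. Writing $u=1-x\cdot y\in(0,\rho)$, one finds $4\pi(G^\rho-G)=u/\rho+\ln(\rho/u)-1\ge0$, whose integral against $2\pi\,du$ over $(0,\rho)$ is $O(\rho)$, comfortably within the asserted $O(\rho\ln\rho)$ bound. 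For the gradient, the magnitude of the difference is $\tfrac{1}{4\pi}\bigl(\tfrac1u-\tfrac1\rho\bigr)\sqrt{1-t^2}$ with $\sqrt{1-t^2}=\sqrt{u(2-u)}\sim\sqrt{2u}$; the dangerous $1/u$ singularity is tamed by the $\sqrt{u}$ from the metric factor, and $\int_0^\rho\tfrac{\rho-u}{\rho\sqrt{u}}\,du=\tfrac43\rho^{1/2}$ delivers the $O(\rho^{1/2})$ estimate.

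Part 3 is the explicit Fourier expansion. Using the addition theorem one has $\hat G^\rho_n=2\pi\int_{-1}^1 G^\rho(t)P_n(t)\,dt$, which splits into a logarithmic integral over $[-1,1-\rho]$ and a linear one over $[1-\rho,1]$. The closed forms \eqref{eqn:ghn1}--\eqref{eqn:ghn3} then emerge from standard Legendre identities (the antiderivative $\int P_n=\tfrac{P_{n+1}-P_{n-1}}{2n+1}$, the recurrence for $\int tP_n$, and integration by parts to absorb $\ln(1-t)$); since this bookkeeping is carried out in \cite[Lemma 4.7]{freedenschreiner09}, I would record the identity $\hat G^\rho_n=2\pi\int_{-1}^1 G^\rho(t)P_n(t)\,dt$ and refer to that source. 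I expect the gradient estimate in Part 2 to be the main obstacle, as it is the one place where a genuine singularity must be controlled rather than merely integrated, and obtaining the exact power $\rho^{1/2}$ hinges on the precise cancellation between the $1/u$ blow-up of $\nabla_{\SS,x}G$ and the $\sqrt{u}$ decay of $\sqrt{1-t^2}$ on the cap.
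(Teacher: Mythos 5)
Your proposal is correct, but note that the paper does not actually prove this proposition: it explicitly states that the regularization was chosen ``so that we can rely on some already known explicit computations'' and simply cites \cite[Remark 8.19]{freedengerhards12} for Part 2 and \cite[Lemma 4.7]{freedenschreiner09} for Part 3. What you supply is therefore a genuine self-contained derivation where the paper offers only references. Your reduction to the variable $t=x\cdot y$ is sound, and the key computations check out: $\Delta_{\SS,y}G^{\rho}=\tfrac{x\cdot y}{2\pi\rho}$ on the cap and $-\tfrac{1}{4\pi}$ off it, giving $\|\DeltaS G^{\rho}(\DeltaS;x,\cdot)\|_{L^2(\SS)}^2\sim\tfrac{1}{2\pi\rho}$; the substitution $v=u/\rho$ gives $\int_0^\rho(\tfrac{u}{\rho}-\ln\tfrac{u}{\rho}-1)\,du=\tfrac{\rho}{2}$ (which also independently confirms $\hat{G}^{\rho}_0=\tfrac{\rho}{4}$ via your normalization $\hat{G}^{\rho}_n=2\pi\int_{-1}^1G^{\rho}(t)P_n(t)\,dt$, so that normalization is right); and $\int_0^\rho\tfrac{\rho-u}{\rho\sqrt{u}}\,du=\tfrac{4}{3}\rho^{1/2}$ is exactly the cancellation needed for the gradient bound. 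Two small points are worth making explicit. First, in Part 1 you should remark that the $C^1$ matching of the two branches at $x\cdot y=1-\rho$ (values and first derivatives both equal $\tfrac{1}{4\pi}(1+\ln\rho-\ln 2)$ and $-\tfrac{1}{4\pi\rho}$ respectively) ensures the distributional Laplacian carries no singular layer on that circle, so the piecewise formula really is the $L^2$ representative; this is what licenses the passage to the $\sobs{2}$ norm via Remark \ref{rem:spectrepdeltas} together with the uniform boundedness of $\|G^{\rho}(\DeltaS;x,\cdot)\|_{L^2(\SS)}$. Second, your first estimate in Part 2 actually yields the sharper bound $O(\rho)$ rather than $O(\rho|\ln\rho|)$; this is consistent with (and implies) the stated inequality for small $\rho$, but you should say so rather than leave the discrepancy unexplained --- the logarithmic factor in the paper's statement is inherited from the cited reference and is not needed for this particular regularization.
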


\section{Preliminaries on Hardy spaces and localization constraints}\label{sec:back2}

In order to state the desired relations between $H_+(\SS)$ and $H_-(\SS)$ under localization constraints, we very briefly recapitulate classical layer potentials and the definition of Hardy spaces. Subsequently, we state the subspaces of $H_+(\SS)$ and $H_-(\SS)$ in which we want to perform approximation. This forms the foundation for most considerations in the later course of this paper. 

\subsection{Layer potentials and Hardy spaces}\label{sec:prelhardy}

For \( f \in \lts \) we define the (boundary) single layer potential $S \colon \lts \to H^1(\SS)$ via
\begin{equation*}
	Sf(x)
	=-\frac{1}{4\pi}\int_{\SS} \frac{f(y)}{|x-y|} \ d \omega(y),
	\quad x\in  \SS.
\end{equation*}
$S$ is bounded and invertible. Furthermore, we  define the operators $K - \tfrac{1}{2} \id \colon \lts / \langle 1 \rangle \to \lts / \langle 1 \rangle$ and $K + \tfrac{1}{2} \id \colon \lts \to \lts$, where \( \id \) is the identity operator and \( K \) is the (boundary) double layer potential
\begin{equation*}
	Kf(x)= 	-\textnormal{p.v.}\, \frac{1}{4\pi} \, \int_{\SS} 	\frac{\n(y)\cdot (x-y)}{|x-y|^{3} } f(y) \ d \omega(y), \quad x\in  \SS,
\end{equation*}
with $\n(y)=y$ simply denoting the unit normal vector to the sphere at the location $y\in\SS$. Both operators $K + \tfrac{1}{2} \id$ and $K - \tfrac{1}{2} \id$ are invertible and self-adjoint. The operator \( K + \tfrac{1}{2} \id \) preserves constant functions while the operator \( K - \tfrac{1}{2}I \) annihilates them. Latter is the reason why we formally define it only on the space $\lts / \langle 1\rangle$, although we will not require this particular distinction throughout most parts of the paper. For more information on (boundary) layer potentials we refer the reader, e.g., to  \cite{fabmenmit98,ver84} and references therein. Here, we merely use them to define the operators $B_{+}\colon \lts / \langle 1 \rangle\to  L^2(\SS,\R^3)$ and $B_{-} \colon \lts\to L^2(\SS,\R^3)$ given by 
\begin{align}\label{cross_s}
		B_{+}=\n \ \big(K-\tfrac{1}{2} \id \big)+\grads S,&&
		B_{-}=\n \ \big(K+\tfrac{1}{2} I\big)+\grads S.
\end{align}
The Hardy spaces are then defined as the ranges of the corresponding operators $B_+$ and $B_-$, i.e.,
\begin{align}
	H_+(\SS)=B_+( \lts / \langle 1 \rangle),&& H_-(\SS)=B_-( \lts).
\end{align} 
This notation will be useful for the purposes of the paper at hand; in particular, it allows to reduce considerations in the vectorial Hardy spaces to considerations in the scalar $\lts$ space. For the statement that these definitions are equivalent to the more classical definition of Hardy spaces via non-tangential limits of harmonic gradients, we refer the reader to \cite{bargerkeg20}. There, one can also find the statement of the Hardy-Hodge decomposition
\begin{align}
	L^2(\SS,\R^3)=H_+(\SS)\oplus H_-(\SS) \oplus D(\SS),\label{eqn:hhdecomp2}
\end{align}  
where $D(\SS)=\{\f\in L^2(\SS,\R^3):\f\textnormal{ is tangential and }(\grads S)^*\f=0\}$ denotes the space of tangential, divergence-free vector fields on the sphere. Note again that $B_+$ can also be considered as acting on the full $L^2(\SS)$ where it annihilates constant functions. Furthermore, it should be mentioned that the operators $B_+$, $B_-$ closely resemble the operators $\tilde{o}^{(1)}$, $\tilde{o}^{(2)}$ in \cite{freedengerhards12,freedenschreiner09,gerhards16a} (however, the notation of $B_+$ and $B_-$ more naturally allows considerations on Lipschitz surfaces, e.g., in \cite{bargerkeg20}, while the notation of $\tilde{o}^{(1)}$ and $\tilde{o}^{(2)}$ is rather specific to the sphere).

\begin{rem}
	Some basic calculations show that, for $f\in L^2(\SS)$, it holds $(Sf)^\wedge_{n,k}=-\frac{1}{2n+1}\hat{f}_{n,k}$ (see, e.g., \cite[Sect. 2.5.2]{freedengerhards12}). Analogously, it holds $(Kf)^\wedge_{n,k}=\frac{1}{4n+2}\hat{f}_{n,k}$. Thus, the application of the single and double layer potential can be easily computed in spectral domain.
\end{rem}

\subsection{Localization constraints}\label{sec:locrecap}

\begin{defi}
For the remainder of this paper, we assume that $\Sigma\subset \SS$, with $\overline{\Sigma}\not=\SS$, is a Lipschitz domain with connected boundary. Let $\Sigma^c=\SS\setminus \overline{\Sigma}$ be the open complement of $\Sigma$, which again is a Lipschitz domain with connected boundary. By $P_+:L^2(\SS,\R^3)\to H_+(\SS)$ and $P_-:L^2(\SS,\R^3)\to H_-(\SS)$ we denote the orthogonal projections onto $H_+(\SS)$ and $H_-(\SS)$, respectively. For any vector field $\f$ in $L^2(\SS,\R^3)$, we abbreviate its Hardy components by $\f_+=P_+(\f)$ and $\f_-=P_-(\f)$. Additionally, $P_\Sigma:L^2(\SS)\to L^2(\Sigma)$ denotes the projection onto $L^2(\Sigma)$, and we define the auxiliary space 
\begin{align}\label{eqn:vs}
	V_\Sigma=\{f\in L^2(\SS):\grads\cdot(\grads Sf)|_\Sigma=0 \textnormal{ as distribution on }\Sigma\}
\end{align}
of functions whose single layer potential is harmonic in $\Sigma$.
\end{defi}

In order to comply with the notation in \cite{gerhuakeg23}, we will implicitly assume that $\supp(\mathbf{f})\subset\Sigma^c$ if we say that a vector field is locally supported, i.e., we assume that $\f$ is in $L^2(\Sigma^c,\R^3)$. The upcoming theorem (which is a collection of results from \cite{gerhuakeg23}) states the desired unique relation between the $H_+(\SS)$-contribution $\f_+$ and the $H_-(\SS)$-contribution $\f_-$ of such a locally supported vector field $\f$. In particular, it characterizes the relevant subspaces $P_+(L^2(\Sigma^c,\R^3))\subset H_+(\SS)$ and $P_-(L^2(\Sigma^c,\R^3))\subset H_-(\SS)$. 

\begin{thm}\label{thm:tauptm}
It holds that $P_+(L^2(\Sigma^c,\R^3))=B_+(D_{+,\Sigma})$ and $P_-(L^2(\Sigma^c,\R^3))=B_-(D_{-,\Sigma})$, with 
\begin{align}
	D_{+,\Sigma}&=L^2(\Sigma^c)+\left(K+\frac{1}{2}I\right)V_\Sigma,\label{eqn:dsp}
	\\D_{-,\Sigma}&=L^2(\Sigma^c)+\left(K-\frac{1}{2}I\right)V_\Sigma.\label{eqn:dsm}
\end{align}
Both $D_{+,\Sigma}$ and $D_{-,\Sigma}$ are dense but not closed in $L^2(\SS)$ and, consequently, $B_+(D_{+,\Sigma})$ and $B_-(D_{-,\Sigma})$ are dense but not closed in $H_+(\SS)$ and $H_-(\SS)$, respectively. Furthermore, we define the linear mappings $\tau_{ptm}:D_{+,\Sigma}\to D_{-,\Sigma}$ and $\mtp:D_{-,\Sigma}\to D_{+,\Sigma}/\langle1\rangle$ by
\begin{align}
	\ptm&=\left[P_\Sigma\left(K+\frac{1}{2}I\right)\right]^{-1}P_\Sigma-I\label{eqn:ptm}
	\\\mtp&=-\left[P_\Sigma\left(K-\frac{1}{2}I\right)\right]^{-1}P_\Sigma-I+\langle\cdot,1\rangle_{L^2(\SS)}.\label{eqn:mtp}
\end{align}
Both operators are surjective and unbounded; they are inverse to each other in the sense $\ptm\circ\mtp=I|_{D_{-,\Sigma}}$ and $\mtp\circ\ptm=I|_{D_{+,\Sigma}/\langle1\rangle}$. It holds that for $f\in D_{+,\Sigma}$, the function $g=\tau_{ptm}(f)$ is the unique function in $L^2(\SS)$ such that $B_+f+B_-g$ differs from a locally supported vector field in $L^2(\Sigma^c,\R^3)$ only by a tangential divergence-free vector field from $D(\SS)$. Vice versa, if $\f$ is in $L^2(\Sigma^c,\R^3)$ and $\f_+=B_+f$ with $f\in D_{+,\Sigma}$, then $\f_-=B_-\tau_{ptm}(f)$.
\end{thm}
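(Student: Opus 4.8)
The plan is to pass to the scalar picture supplied by $B_+$ and $B_-$ and to convert the support constraint $\supp(\f)\subset\Sigma^c$ into scalar conditions on the generators $f$ and $g$. Given $\f$ in $L^2(\Sigma^c,\R^3)$, I would start from the Hardy--Hodge decomposition $\f=B_+f+B_-g+\d$ with $\d\in\Dsphere$, and split the identity $\f|_\Sigma=0$ into its $\n$-normal and tangential parts. Since $\grads Sf$, $\grads Sg$ are tangential and $\d$ is tangential and divergence-free, the normal part is $(K-\tfrac12 I)f+(K+\tfrac12 I)g=0$ on $\Sigma$, while the tangential part is $\grads S(f+g)+\d=0$ on $\Sigma$. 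The whole argument then hinges on decoding these two equations.

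The first key step is to show the tangential condition is equivalent to $f+g\in V_\Sigma$. Necessity is immediate: applying $\grads\cdot$ and using $\grads\cdot\d=0$ gives $\laps S(f+g)|_\Sigma=0$. For sufficiency I would use that, under the standing assumptions, $\Sigma$ is a topological disk, so the tangential, $\Sigma$-divergence-free field $-\grads S(f+g)|_\Sigma$ admits a stream function that can be extended across $\SS$ and differentiated to produce a global $\d\in\Dsphere$ realizing the tangential identity. Writing $h:=f+g\in V_\Sigma$ and $g=h-f$, the normal condition collapses via $(K-\tfrac12 I)-(K+\tfrac12 I)=-I$ to $P_\Sigma f=P_\Sigma(K+\tfrac12 I)h$. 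Hence $f-(K+\tfrac12 I)h$ vanishes on $\Sigma$, i.e. $f\in L^2(\Sigma^c)+(K+\tfrac12 I)V_\Sigma=D_{+,\Sigma}$; conversely, every such $f$ yields admissible $g$ and $\d$, which gives $P_+(L^2(\Sigma^c,\R^3))=B_+(D_{+,\Sigma})$. The same bookkeeping with the roles of $K\pm\tfrac12 I$ exchanged produces the $H_-$ statement, and the formula $\ptm(f)=[P_\Sigma(K+\tfrac12 I)]^{-1}P_\Sigma f-f$ simply records $g=h-f$ with $h=[P_\Sigma(K+\tfrac12 I)]^{-1}P_\Sigma f\in V_\Sigma$, the mutual-inverse relations $\ptm\circ\mtp=I$, $\mtp\circ\ptm=I$ following by symmetry once injectivity is secured.

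For the functional-analytic claims I would argue as follows. Density of $B_+(D_{+,\Sigma})$ in $\Hp$ reduces, via the isomorphism $B_+\colon\lts/\langle1\rangle\to\Hp$, to density of $D_{+,\Sigma}$ in $\lts$: testing against $w$ orthogonal to both $L^2(\Sigma^c)$ and $(K+\tfrac12 I)V_\Sigma$ shows $w$ is supported in $\overline\Sigma$ while $(K+\tfrac12 I)w\perp V_\Sigma$ (using self-adjointness of $K+\tfrac12 I$), and a unique-continuation argument for the single layer potential forces $w=0$. The inverse appearing in $\ptm$ is that of the \emph{restriction} $P_\Sigma(K+\tfrac12 I)|_{V_\Sigma}\colon V_\Sigma\to L^2(\Sigma)$; its unboundedness, and hence the fact that $D_{+,\Sigma}$ is a proper (therefore non-closed) dense subspace, stems from this restriction having dense but non-closed range, a consequence of the smoothing, spectrally accumulating behaviour of the layer operators. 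Indeed, $\ptm$ is a closed operator, so if $D_{+,\Sigma}$ were closed it would equal $\lts$ and $\ptm$ would be bounded, a contradiction. Surjectivity of $\ptm,\mtp$ is then formal.

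The main obstacle is the uniqueness/injectivity input, which is analytic rather than algebraic. One must show that a locally supported field with vanishing $H_+$-component is purely tangential divergence-free -- equivalently, that $g\in V_\Sigma$ with $P_\Sigma(K+\tfrac12 I)g=0$ forces $g=0$ -- together with the dual statement used for density. These are exactly the assertions that make $\f_-$ a well-defined function of $\f_+$ under the localization constraint, and I expect them to rest on unique-continuation/Holmgren-type arguments for harmonic functions in the interior and exterior of the ball, combined with the injectivity of $B_\pm$; the extension of a $\Sigma$-divergence-free tangential field to a global member of $\Dsphere$ is the secondary technical point.
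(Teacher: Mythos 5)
The paper does not actually prove this theorem: it is introduced as ``a collection of results from \cite{gerhuakeg23}'' and imported without argument, so there is no in-paper proof to compare against. Your algebraic skeleton is correct and is very plausibly the same reduction used in that reference: splitting $\f|_\Sigma=0$ into the normal component $(K-\tfrac{1}{2}I)f+(K+\tfrac{1}{2}I)g=0$ on $\Sigma$ and the tangential component $\grads S(f+g)+\d=0$ on $\Sigma$, deducing $h:=f+g\in V_\Sigma$ by taking the surface divergence, and collapsing the normal condition to $P_\Sigma f=P_\Sigma(K+\tfrac{1}{2}I)h$ does reproduce \eqref{eqn:dsp} and \eqref{eqn:ptm}, and by the symmetric computation \eqref{eqn:dsm} and \eqref{eqn:mtp}.

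The difficulty is that the proposal defers exactly the statements that carry the analytic content of the theorem, as you concede in your final paragraph. Concretely: (i) injectivity of $P_\Sigma(K+\tfrac{1}{2}I)|_{V_\Sigma}$, without which the inverse in \eqref{eqn:ptm} is not defined and the uniqueness claim for $g=\ptm(f)$ fails; (ii) the sufficiency half of your ``equivalence'', namely that $-\grads S h|_\Sigma$ for $h\in V_\Sigma$ extends to a global element of $D(\SS)$ --- the paper treats this as a separate lemma (\cite{gerhuakeg23}, Lemma A.1, invoked in the proof of Proposition \ref{prop:minnorm}), and your one-sentence stream-function remark skips the matching of normal components across $\partial\Sigma$, which in the paper's construction is handled by solving a Neumann problem on $\Sigma^c$; (iii) density of $D_{+,\Sigma}$ in $L^2(\SS)$ and the non-closedness of the range of $P_\Sigma(K+\tfrac{1}{2}I)|_{V_\Sigma}$, which you reduce to ``unique-continuation/Holmgren-type arguments'' and ``spectrally accumulating behaviour'' without carrying them out. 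There is also a mild circularity in your closed-graph argument: you infer non-closedness of $D_{+,\Sigma}$ from unboundedness of $\ptm$, but unboundedness itself requires the independent input (iii). A small additional point: $K-\tfrac{1}{2}I$ annihilates constants, so the inverse in \eqref{eqn:mtp} must be taken modulo constants --- this is what the $\langle\cdot,1\rangle_{L^2(\SS)}$ correction repairs, and your sketch does not address it. As a blueprint the proposal is sound and matches the structure one would expect; as a proof it leaves the essential analysis to the same external facts the paper cites.
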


\begin{rem}\label{rem:scalarg}
Observing that the operators $B_+$ and $B_-$ are bounded and invertible, Theorem \ref{thm:tauptm} implies that, for the construction of vectorial spherical basis functions in the Hardy subspaces of interest, it suffices to find adequate spherical basis functions in the scalar-valued spaces $D_{+,\zeroset}$ and $D_{-,\zeroset}$. Therefore, for the remainder of the paper, in order to simplify notations, we focus on the scalar setup of the spaces $D_{+,\zeroset}$, $D_{-,\zeroset}$, and the operators $\ptm$, $\mtp$. The corresponding results for the vectorial setup are obtained immediately by application of the operators $B_+$ and $B_-$. 
\end{rem}

\subsection{Norm minimizing localized vector fields}\label{sec:minnorm}

Remembering that the original motivation to study localization constraints came from the study of non-uniqueness properties for inverse magnetization problems, we may ask the following question: given a magnetic field in the exterior of the sphere $\SS$, what is the magnetization of minimal norm that is localized within a given source region and that produces the given magnetic field? In terms of the paper at hand, the corresponding question would be: given the $H_+(\SS)$-contribution of some vector field $\f$ in $L^2(\Sigma^c,\R^3)$, what is the vector field $\tilde{\f}$ in $L^2(\Sigma^c,\R^3)$ of minimal norm such that $f_+=\tilde{f}_+$? This question is answered in the next proposition.

\begin{prop}\label{prop:minnorm}
	Let $f$ be in $D_{+,\Sigma}$ and set $\tilde{\f}=B_+f+B_-\ptm(f)+\tilde{\f}_{df}$ with
	\begin{align}\label{eqn:fdf}
		\tilde{\f}_{df}=\left\{\begin{array}{ll}-B_+f-B_-\ptm(f),&\textnormal{ on }\Sigma, \\-\nabla_\SS h,&\textnormal{ on }\Sigma^c,\end{array}\right.
	\end{align}
	and $h\in L^2(\Sigma^c)$ such that $\Delta_\SS h=0$ on $\Sigma^c$ and $\boldsymbol{\nu}\cdot\nabla_\SS h=\boldsymbol{\nu}\cdot(B_+f+B_-\ptm(f))$ on $\partial \Sigma$ (the latter two properties are meant in the distributional sense; and $\boldsymbol{\nu}$ denotes the unit vector that is tangential to $\SS$, normal to the boundary $\partial\Sigma$, and points into the exterior of $\Sigma$, i.e., into $\Sigma^c$). Then it holds that $\tilde{\f}$ is in $L^2(\Sigma^c,\R^3)$, $\tilde{\f}_{df}$ is in $H_{df}(\SS)$, and
	\begin{align}\label{eqn:minnorm}
		\scalemath{0.95}{\|\tilde{\f}\|_{L^2(\SS,\R^3)}=\min\left\{\|\f\|_{L^2(\SS,\R^3)}:\f=B_+f+\g+\mathbf{d} \in L^2(\Sigma^c,\R^3),\,\g\in H_-(\SS),\,\mathbf{d}\in H_{df}(\SS)\right\}.}
	\end{align}
\end{prop}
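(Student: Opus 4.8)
The plan is to exploit the orthogonality of the Hardy--Hodge decomposition \eqref{eqn:hhdecomp2} together with the uniqueness assertion of Theorem \ref{thm:tauptm} (recall that $\Divfree$ coincides with the tangential divergence-free space $\Dsphere$ appearing there). Any competitor $\f=B_+f+\g+\d\in L^2(\Sigma^c,\R^3)$ with $\g\in\Hm$ and $\d\in\Divfree$ has, by orthogonality, $\g$ and $\d$ equal to its $\Hm$- and $\Divfree$-components, so that
\begin{align*}
\|\f\|_{L^2(\SS,\R^3)}^2=\|B_+f\|_{L^2(\SS,\R^3)}^2+\|\g\|_{L^2(\SS,\R^3)}^2+\|\d\|_{L^2(\SS,\R^3)}^2.
\end{align*}
The first term is fixed. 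Moreover $B_+f+\g=\f-\d$ agrees with the locally supported field $\f$ up to the tangential divergence-free field $\d$, so the uniqueness part of Theorem \ref{thm:tauptm} forces $\g=B_-\ptm(f)$ for \emph{every} admissible $\f$. Hence the second term is fixed as well, and the problem collapses to minimizing $\|\d\|_{L^2(\SS,\R^3)}^2$ over $\d\in\Divfree$ subject to the support constraint $B_+f+B_-\ptm(f)+\d=0$ on $\Sigma$, i.e.\ over all tangential divergence-free fields with prescribed restriction $\d|_\Sigma=-(B_+f+B_-\ptm(f))|_\Sigma$.

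Writing $\mathbf{w}=B_+f+B_-\ptm(f)$, I would first verify the structural claims. By Theorem \ref{thm:tauptm}, $\mathbf{w}$ differs from an element of $L^2(\Sigma^c,\R^3)$ only by a field in $\Divfree$; restricting to $\Sigma$, where the locally supported part vanishes, shows that $\mathbf{w}|_\Sigma$ coincides with a tangential divergence-free field, so $\tilde{\f}_{df}=-\mathbf{w}$ is tangential and divergence-free on $\Sigma$. On $\Sigma^c$, $\tilde{\f}_{df}=-\nabla_\SS h$ is tangential and, since $\Delta_\SS h=0$ there, divergence-free. The only delicate point is the absence of a singular divergence along $\partial\Sigma$: the $\boldsymbol{\nu}$-trace from the $\Sigma$-side is $-\boldsymbol{\nu}\cdot\mathbf{w}$, that from the $\Sigma^c$-side is $-\boldsymbol{\nu}\cdot\nabla_\SS h$, and these agree precisely by the imposed condition $\boldsymbol{\nu}\cdot\nabla_\SS h=\boldsymbol{\nu}\cdot\mathbf{w}$ on $\partial\Sigma$; thus $\tilde{\f}_{df}\in\Divfree$. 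That $\tilde{\f}\in L^2(\Sigma^c,\R^3)$ is then immediate, as $\tilde{\f}=\mathbf{w}+\tilde{\f}_{df}=0$ on $\Sigma$, and $P_+(\tilde{\f})=B_+f$ by orthogonality, so $\tilde{\f}$ is admissible. Solvability of the Neumann problem defining $h$ follows from the compatibility condition $\int_{\partial\Sigma}\boldsymbol{\nu}\cdot\mathbf{w}\,ds=0$, which is the divergence theorem applied to the divergence-free field $\mathbf{w}|_\Sigma$ on $\Sigma$.

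It then remains to show that $\tilde{\f}_{df}$ is the norm-minimizing choice within the admissible affine class. Since $\d|_\Sigma$ is prescribed, the $L^2(\Sigma)$-part of the norm is fixed and only the $L^2(\Sigma^c)$-part is free; the admissible variations are $\d'\in\Divfree$ with $\d'|_\Sigma=0$, i.e.\ tangential divergence-free fields supported in $\Sigma^c$, which have vanishing $\boldsymbol{\nu}$-trace on $\partial\Sigma$. By the projection theorem it suffices to verify $\langle\tilde{\f}_{df},\d'\rangle_{L^2(\Sigma^c,\R^3)}=0$ for all such $\d'$. Integration by parts on $\Sigma^c$ yields
\begin{align*}
\langle\tilde{\f}_{df},\d'\rangle_{L^2(\Sigma^c,\R^3)}=-\int_{\Sigma^c}\nabla_\SS h\cdot\d'\,d\omega=\int_{\Sigma^c}h\,(\nabla_\SS\cdot\d')\,d\omega+\int_{\partial\Sigma}h\,(\boldsymbol{\nu}\cdot\d')\,ds=0,
\end{align*}
both terms vanishing because $\d'$ is divergence-free on $\Sigma^c$ and $\boldsymbol{\nu}\cdot\d'=0$ on $\partial\Sigma$. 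Together with $\|\f\|^2=\|\mathbf{w}\|^2+\|\d\|^2$ (using $\mathbf{w}\in\Hp\oplus\Hm\perp\d$), this gives the claimed minimum \eqref{eqn:minnorm}.

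The main obstacle I expect is the rigorous treatment of the interface $\partial\Sigma$ on the Lipschitz domain $\Sigma^c$: identifying membership in $\Divfree$ with the matching of normal traces across $\partial\Sigma$, justifying the integration by parts with the correct distributional boundary terms, and confirming well-posedness of the Neumann problem for $h$ in the appropriate energy space. By contrast, the algebraic reduction via Theorem \ref{thm:tauptm}, which pins down the $\Hm$-component and thereby removes it from the optimization, is the conceptual step that turns the constrained minimization into a standard Dirichlet-type energy minimization over the $\Divfree$-component alone.
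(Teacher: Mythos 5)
Your proposal is correct and follows essentially the same route as the paper: pin down $\g=B_-\ptm(f)$ via Theorem \ref{thm:tauptm}, use the orthogonality of the Hardy--Hodge decomposition to isolate the $H_{df}(\SS)$-component, observe that any admissible $\d$ differs from $\tilde{\f}_{df}$ by a divergence-free field supported in $\Sigma^c$, and kill the cross term because $\tilde{\f}_{df}$ is a gradient there. The only difference is that you re-derive the membership $\tilde{\f}_{df}\in H_{df}(\SS)$ (normal-trace matching and solvability of the Neumann problem), which the paper simply cites from \cite[Lemma A.1]{gerhuakeg23}.
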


\begin{proof}
	The statement that $\f_{df}$ of the form \eqref{eqn:fdf} exists and is in $H_{df}(\SS)$ has been shown in \cite[Lemma A.1]{gerhuakeg23}. By construction it is then clear that $\tilde{\f}$ vanishes on $\Sigma$, i.e., $\tilde{\f}$ is in $L^2(\Sigma^c,\R^3)$. It remains to show \eqref{eqn:minnorm}. From the previous section it is known that $\g=B_-\ptm (f)$ is necessary in order to enable $B_+f+\g+\mathbf{d}$ to be in $L^2(\Sigma^c,\R^3)$, so that only the choice of $\d$ needs to be investigated. Due to the orthogonality of $H_+(\SS)$, $H_-(\SS)$, and $H_{df}(\SS)$, we know
	\begin{align}
		\|B_+f+B_-\ptm(f)+\mathbf{d}\|_{L^2(\SS,\R^3)}^2=\|B_+f+B_-\ptm(f)\|_{L^2(\SS,\R^3)}^2+\|\mathbf{d}\|_{L^2(\SS,\R^3)}^2.
	\end{align}
	Furthermore, any $\mathbf{d}$ that guarantees $B_+f+B_-\ptm(f)+\mathbf{d}$ to vanish on $\Sigma$ must coincide with $\tilde{\f}_{df}$ on $\Sigma$. Thus, we obtain $\d=\tilde{\f}_{df}+\tilde{\d}$, for some $\tilde{\d}\in H_{df}(\SS)$ with $\supp(\tilde{\d})\subset\Sigma^c$. We can compute further,
	\begin{align}
		\|\mathbf{d}\|_{L^2(\SS,\R^3)}^2&=\|\tilde{\f}_{df}\|_{L^2(\SS,\R^3)}^2+\|\tilde{\mathbf{d}}\|_{L^2(\SS,\R^3)}^2+2\langle \tilde{\f}_{df},\tilde{\d}\rangle_{L^2(\SS,\R^3)}
		\\&=\|\tilde{\f}_{df}\|_{L^2(\SS,\R^3)}^2+\|\tilde{\mathbf{d}}\|_{L^2(\SS,\R^3)}^2.\nonumber
	\end{align} 
	The last summand in the first row vanishes due to the local support condition on $\tilde{\d}$ and the fact that $\tilde{\f}_{df}$ is expressible as a gradient on $\Sigma^c$. The minimization of the norm of $\d$ therefore leads to $\tilde{\d}=0$, which concludes the proof.
\end{proof}

\section{Spherical basis functions related to Hardy spaces}\label{sec:sbf}

We now consider spherical basis functions contained in and suited for approximation in $D_{+,\zeroset}$ and $D_{-,\zeroset}$, respectively. Remembering the characterizations  \eqref{eqn:dsp} and \eqref{eqn:dsm}, we first deal with approximation in the space $L^2(\zeroset^c)$, then with approximation in $V_\zeroset$, and, subsequently, in Section \ref{sec:approxhardy} we join both considerations to deal with approximation in the actual spaces $D_{+,\zeroset}$ and $D_{-,\zeroset}$. From Remark \ref{rem:scalarg} we know that this suffices for approximation in the corresponding subspaces of $H_+(\SS)$ and $H_-(\SS)$. Again, throughout, we assume that $\Sigma\subset \SS$ is a Lipschitz domain with connected boundary.

\subsection{Approximation in $L^2(\Sigma^c)$}\label{sec:approxl2s}

Simply for notational convenience, we drop the index "$c$" in this subsection and consider approximation in $L^2(\zeroset)$ instead of $L^2(\zeroset^c)$. Obviously, all results equally hold true on $\Sigma^c$ just by taking the complement. We prove the upcoming theorem following step by step the argumentation in the proof of the main result in \cite{towwen13} for the Euclidean setup, solely adapting it to the spherical setup where necessary. The main framework for the spherical setup has already been described in Section \ref{sec:spherbasis}, based on, e.g., \cite{legnarwar06,LeGia2017,legia10} and the stereographic projection.

\begin{thm}\label{thm:l2sigmaapprox}
Let $X_n\subset \SS$ be pointsets with associated mesh widths $h_n=h_{X_n,\Sigma}$ and separation widths $q_n=q_{X_n,\Sigma}$ that satisfy $c_1\gamma h_n\leq h_{n+1}\leq \gamma h_n$ and $q_n\leq h_n\leq c_2 q_n$, for some fixed constants $c_1,\gamma\in(0,1)$ and $c_2\geq 1$. Furthermore, let $\Psi$ be as in  Proposition \ref{prop:spectraofdilatedkernel} with native space $H^{s}(\SS)$, for some $s>1$, and choose scaling factors $\delta_n=\nu h_n$, for some fixed $\nu>1$. Eventually, set $\tilde{X}_n=\{x\in X_n:\mathcal{B}_{\delta_n}(x)\cap\SS\subset \Sigma\}$. Then, for sufficiently large $\nu$ and sufficiently small $\gamma,h_1$, the following holds true: For any $f$ in $H^s(\Sigma)$, there exists a $f_n$ in $\textnormal{span}\{\Psi_{\delta_i,x_i}:x_i\in\tilde{X}_i,\,i=1,\ldots,n\}\subset H^s(\Sigma)$ with
	\begin{align}\label{eqn:errl2wend}
		\|f-f_n\|_{L^2(\SS)}\leq C \eta^{n}\|f\|_{H^{s}(\SS)},
	\end{align}
	where the constants $C>0$ and  $\eta\in(0,1)$ depend on $\nu$, $\gamma$, $c_2$, $h_1$, and $s$.
\end{thm}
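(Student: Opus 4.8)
The plan is to follow the multilevel, compactly supported approximation scheme of \cite{towwen13}, transferred to the sphere via stereographic projection, and to close a geometric error recursion. First I would set up the iteration: put $e_0=f$ and, for $n\ge 1$, define
\begin{align*}
f_n = f_{n-1} + I_{\tilde{X}_n,\Psi_{\delta_n}}(e_{n-1}), \qquad e_n = e_{n-1} - I_{\tilde{X}_n,\Psi_{\delta_n}}(e_{n-1}),
\end{align*}
where $I_{\tilde{X}_n,\Psi_{\delta_n}}$ is the interpolation operator from Proposition \ref{prop:projectioninnativespace} associated with the scaled kernel $\Psi_{\delta_n}$ and the reduced pointset $\tilde{X}_n$. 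By construction $f_n$ lies in $\textnormal{span}\{\Psi_{\delta_i,x_i}:x_i\in\tilde{X}_i,\,i=1,\ldots,n\}$, and since every center $x\in\tilde{X}_i$ satisfies $\mathcal{B}_{\delta_i}(x)\cap\SS\subset\Sigma$, each $\Psi_{\delta_i,x}$ is supported in $\Sigma$; hence $f_n\in H^s(\Sigma)$ and, as $f\in H^s(\Sigma)$ as well, every residual $e_n$ again lies in $H^s(\Sigma)$ and in particular vanishes (in the trace sense) on $\partial\Sigma$.

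Second, I would establish the one-level $L^2$-reduction $\|e_n\|_{\lts}\le C\,h_n^s\,\|e_n\|_{\sobs{s}}$. The residual $e_n$ vanishes on $\tilde{X}_n$ by the interpolation property, so on the interior part of $\Sigma$ it is an $H^s$-function vanishing on a set of mesh width comparable to $h_n$, and the zeros estimate of Proposition \ref{prop:zerolemma} (imported to $\R^2$ through the norm equivalences of Proposition \ref{prop:propertyofstereographicprojection} and the Euclidean sampling inequalities of \cite{narwarwen05,towwen13}) applies there. The only points of $X_n\cap\Sigma$ not used as centers are those in the boundary layer $X_n\cap\Sigma\setminus\tilde{X}_n$, all lying within distance $\sim\delta_n=\nu h_n$ of $\partial\Sigma$; since $e_n\in H^s(\Sigma)$ vanishes on $\partial\Sigma$, its values there are controlled by this proximity, and the boundary term can be absorbed to yield the claimed sampling inequality over all of $\Sigma$.

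Third comes the stability step that prevents the Sobolev norm of the residual from blowing up. Proposition \ref{prop:projectioninnativespace} gives $\|e_n\|_{\Psi_{\delta_n}}\le\|e_{n-1}\|_{\Psi_{\delta_n}}$, the spectral equivalence of Proposition \ref{prop:spectraofdilatedkernel} converts native-space norms into $\sobs{s}$-norms at the cost of a factor $\delta_n^{-s}$, and the geometric scale decay $\delta_{n+1}\le\gamma\delta_n$ (from $h_{n+1}\le\gamma h_n$) lets these factors telescope rather than accumulate. Combined with the second step this produces an estimate of the form $\|e_n\|_{\lts}\le C\,(h_n/\delta_n)^s\,\|e_{n-1}\|_{\Psi_{\delta_n}} = C\,\nu^{-s}\,\|e_{n-1}\|_{\Psi_{\delta_n}}$, so choosing $\nu$ large makes the prefactor $\nu^{-s}$ a genuine contraction; a final bookkeeping of the native norms across consecutive levels (using $q_n\le h_n\le c_2 q_n$ to keep mesh and separation comparable on $\tilde{X}_n$, so that Proposition \ref{Linftyboundedness} and the interpolation bounds hold with uniform constants) closes the recursion and delivers $\|e_n\|_{\lts}\le C\eta^n\|f\|_{\sobs{s}}$ with $\eta\in(0,1)$.

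The hard part will be the boundary layer in the second step: unlike the unconstrained multilevel theory, no basis functions are centered within $\sim\delta_n$ of $\partial\Sigma$, so the residual is only forced to vanish on $\tilde{X}_n$ and not on a mesh filling all of $\Sigma$. Turning the vanishing trace of $e_n\in H^s(\Sigma)$ on $\partial\Sigma$ into a quantitative control of its values in this uncovered collar --- uniformly across levels as $h_n\to0$ --- is exactly where the support constraint makes the argument depart from the purely Euclidean interior estimates, and it is where the choices of $\nu$ large and of $\gamma,h_1$ small enter to guarantee simultaneously that $\tilde{X}_n$ is nonempty with a controlled covering of the interior and that the collar contribution stays subordinate to the interior gain $\nu^{-s}$.
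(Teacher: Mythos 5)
Your multiscale setup, the interior mechanism (zeros estimate after stereographic projection, native-space norms, the contraction factor $\nu^{-s}$ coming from $(h_n/\delta_n)^s$), and the role of $q_n\le h_n\le c_2q_n$ all match the paper's proof. The genuine gap is your second step: the claim that the vanishing trace of $e_n$ on $\partial\Sigma$ lets you ``absorb'' the boundary collar and obtain the full sampling inequality $\|e_n\|_{L^2(\Sigma)}\le C h_n^{s}\|e_n\|_{H^{s}(\Sigma)}$. This cannot work. In the collar of width $\sim\delta_n=\nu h_n$ there are no centers of $\tilde X_n$, so the only information available is that $e_n\in H^{s}(\Sigma)$ with zero boundary values; via the embedding $H^{s}\hookrightarrow C^{0,\min(s-1,1)}$ this gives at best $|e_n|\le C\,\delta_n^{\min(s-1,1)}\|e_n\|_{H^{s}(\SS)}$ pointwise in the collar, hence $\|e_n\|_{L^2(\mathrm{collar})}\le C\,\delta_n^{\min(s-1,1)+\nicefrac{1}{2}}\|e_n\|_{H^{s}(\SS)}$. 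Feeding this into the recursion via $\|e_n\|_{H^{s}(\SS)}\le C\delta_n^{-s}\|e_{n-1}\|_{\Psi_{\delta_n}}$ produces a factor $\delta_n^{\min(s-1,1)+\nicefrac{1}{2}-s}\le\delta_n^{-\nicefrac{1}{2}}$ that blows up as $h_n\to0$, so the purely multiplicative contraction you want does not close.

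The paper (following \cite{towwen13}) handles the collar differently: it bounds the collar contribution by $\mu(\mathrm{collar})\cdot\|e_i\|_{L^\infty(\SS)}^2\le C\,\delta_i(1+\bar C)^{2i}\|f\|_{L^\infty(\SS)}^2$, using Proposition \ref{Linftyboundedness} to get the uniform stability $\|e_i\|_{L^\infty(\SS)}\le(1+\bar C)^{i}\|f\|_{L^\infty(\SS)}$, and then uses $\delta_i\le C\gamma^{i}\delta_1$ so that $\bigl(\sqrt{\gamma}(1+\bar C)\bigr)^{2i}$ decays for $\gamma$ small. This turns the error recursion into an \emph{inhomogeneous} one, $\|e_i\|_{\Psi_{\delta_{i+1}}}^2\le\alpha^2\|e_{i-1}\|_{\Psi_{\delta_i}}^2+C'\eta^{2i}\|f\|_{H^{s}(\SS)}^2$ with $\alpha^2\sim\nu^{-2s}+\gamma^{2s}$ (the $\gamma^{2s}$ coming from the high-frequency part of the native norm when passing from scale $\delta_i$ to $\delta_{i+1}$, a term your sketch also glosses over), and the final rate is $\eta=\sqrt{\gamma}(1+\bar C)$ rather than $\nu^{-s}$. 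You correctly identified the collar as the hard part, but the mechanism you propose for it does not suffice; the measure-times-$L^\infty$ argument (or an equivalent additive treatment) is needed, which is precisely why the $L^\infty$-stability of the interpolants and the smallness of $\gamma$ enter the hypotheses.
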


Before we proceed to the proof, we mention the following proposition that states some technical properties of Lipschitz domains with a tube around the boundary removed. 

\begin{prop}\cite[Lemma 4.2, Lemma 4.3]{towwen13}\label{prop:seperationofLipschitzdomain}
	Let $\Omega\subset\R^2$ be a bounded Lipschitz domain that satisfies the interior cone condition with parameters $r$ and $\theta$ (reflecting the radius and the opening angle of the cone). Let $c>\max\{2,\frac{\pi}{\theta \sin(\pi/8)}\}$ be fixed. Then there exist constants $C,\delta_0>0$ so that, for any $\delta\in(0,\delta_0)$, there exists a Lipschitz domain $K_{\delta}\subset\Omega$ satisfying
	\begin{enumerate}
		\item $\Omega_{c\delta}\subset K_{\delta}\subset \Omega_\delta$, where $\Omega_\delta=\{x\in\Omega:\inf_{y\in\partial \Omega}\|x-y\|>\delta\}$,
		\item $\mu(\Omega\setminus K_{\delta}) \leq C\,c\delta$, where $\mu$ denotes the Lebesgue measure,
		\item $ K_{\delta}$ satisfies the interior cone condition with parameters $\tilde{\theta} =\min\{\frac{\pi}{5},\frac{\theta}{2}\}$ and $\tilde{r}=\frac{3r\sin\tilde{\theta}}{8(1+1\sin\tilde{\theta})}$.
	\end{enumerate}
\end{prop}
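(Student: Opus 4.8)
The plan is to construct $K_\delta$ by eroding $\Omega$ near its boundary in a way that is transparently Lipschitz, and then to read off the three claimed properties from the construction together with the interior cone condition of $\Omega$. Write $d(x)=\operatorname{dist}(x,\partial\Omega)$ for $x\in\Omega$, so that $\Omega_\delta=\{x\in\Omega:d(x)>\delta\}$. The two containments in item~(1) amount to placing the new boundary $\partial K_\delta$ entirely inside the shell $\{x\in\Omega:\delta\le d(x)\le c\delta\}$, and everything else will follow from this and the hypotheses.

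First I would reduce to a finite Lipschitz atlas. Since $\Omega\subset\R^2$ is a bounded Lipschitz domain, $\partial\Omega$ is compact and can be covered by finitely many coordinate cylinders in each of which, after a rigid motion, $\Omega$ coincides with a subgraph region $\{x_2>\phi(x_1)\}$ for a Lipschitz function $\phi$ of constant $L$, where $L$ is controlled by the cone parameters $r,\theta$ of $\Omega$. In such a chart the inward vertical shift $\{x_2>\phi(x_1)+s\}$ is again a Lipschitz graph domain with the same constant $L$, it is contained in $\Omega$, and a shifted boundary point $(x_1,\phi(x_1)+s)$ has distance to $\partial\Omega$ lying in the interval $[s(1+L^2)^{-1/2},\,s]$. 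Choosing the shift $s$ of size comparable to $c\delta$ therefore places the local shifted boundary into the target shell exactly when $c$ exceeds a fixed multiple of $(1+L^2)^{1/2}$; this is the origin of the lower bound $c>\max\{2,\pi/(\theta\sin(\pi/8))\}$, which encodes the passage from the graph (vertical) shift to the Euclidean shell width.

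Then comes the gluing, which I expect to be the main obstacle. The charts overlap and their ``vertical'' directions differ, so the locally shifted regions must be amalgamated into a single domain without destroying the Lipschitz property. I would patch them using lattice operations on the chart-transported defining functions, subordinate to the finite cover, since the pointwise maximum of finitely many $L$-Lipschitz functions is again $L$-Lipschitz; the resulting open set $K_\delta$ is then a Lipschitz domain by construction. Keeping every local shift within the shell secures $\Omega_{c\delta}\subset K_\delta\subset\Omega_\delta$, i.e.\ property~(1). It is precisely this amalgamation, together with the bookkeeping that keeps all transported Lipschitz constants and shell placements uniform over the finite atlas, that accounts for the two separate lemmas being invoked: one to produce the locally shifted Lipschitz pieces, one to assemble them.

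Finally the remaining properties fall out. For the measure bound~(2): from property~(1) we have $\Omega\setminus K_\delta\subset\{x\in\Omega:d(x)\le c\delta\}$, and the Lebesgue measure of this inner tube of a bounded cone-condition domain in $\R^2$ is at most $C\,c\delta$ by the standard tubular-neighbourhood (coarea / Lipschitz-graph volume) estimate, with $C$ depending only on $\Omega$. For the interior cone condition~(3): given $x\in K_\delta\subset\Omega_\delta$ we have $B(x,\delta)\subset\Omega$, so I would take the $\Omega$-cone $C(x,\xi_x,\theta,r)$ and pass to its axial subcone of half-angle $\tilde\theta=\min\{\pi/5,\theta/2\}$. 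A point of this subcone at axial distance $t$ sits at distance $\gtrsim t\sin(\theta-\tilde\theta)\ge t\sin\tilde\theta$ from the lateral wall of the full cone, so once $t$ is a suitable multiple of $\delta$ (and $t\le r-\delta$) the $\delta$-ball around it still lies in $\Omega$ and, by the shell placement, the point lies in $K_\delta$. Truncating the subcone to these admissible $t$ and sliding the apex back to $x$ yields a genuine interior cone at $x$ inside $K_\delta$; optimizing the admissible range of $t$ against $r$ and $\sin\tilde\theta$ produces exactly the radius $\tilde r=\frac{3r\sin\tilde\theta}{8(1+\sin\tilde\theta)}$, which establishes~(3) and completes the proof.
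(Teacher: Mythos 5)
First, a remark on the comparison itself: the paper does not prove this proposition at all --- it is imported verbatim from \cite{towwen13} (Lemmas 4.2 and 4.3), so there is no in-paper argument to match your sketch against, and it has to stand on its own. It does not, for two reasons.

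The first gap is the gluing step, which you yourself flag as the main obstacle and then dispose of in one sentence. The fact that the pointwise maximum of finitely many $L$-Lipschitz functions is $L$-Lipschitz only helps when all pieces are graphs over a \emph{common} coordinate direction; across different boundary charts the ``vertical'' directions differ, and there is no single frame in which to take lattice operations. What you are really forming is a finite union/intersection of Lipschitz graph domains, and such a set is in general \emph{not} a Lipschitz domain: in an overlap the two shifted boundaries both lie in the shell $\{\delta\le d\le c\delta\}$ but may cross nearly tangentially, producing a cusp (zero interior angle), and nothing in your construction excludes this --- let alone uniformly in $\delta$, which is what the proposition requires. A quantitative transversality argument in the chart overlaps (or an entirely different construction, e.g.\ via a regularized distance function or a cone-based erosion) is exactly the content of the two cited lemmas; asserting ``Lipschitz by construction'' skips it.

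The second gap is in your derivation of the cone condition (3). Your subcone computation shows that a point of the axial subcone at parameter $t$ lies in $\Omega_{c\delta}\subset K_\delta$ once $t\sin\tilde\theta\geq c\delta$ (and $t\le r-c\delta$). But the apex-adjacent part of the cone, $t\in(0,\,c\delta/\sin\tilde\theta)$, is only known to lie in $\Omega$; containment (1) gives $K_\delta\subset\Omega_\delta$ as an \emph{upper} bound, so points of the shell $\delta<d\le c\delta$ need not belong to $K_\delta$, and ``sliding the apex back to $x$'' silently assumes the connecting segments stay in $K_\delta$ --- a star-shapedness along cone directions that must come from the explicit construction of $K_\delta$, not from the sandwich. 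Indeed (1) alone is far too weak to imply (3): a set squeezed between $\Omega_{c\delta}$ and $\Omega_\delta$ can have arbitrarily irregular boundary inside the shell. This confirms that both the Lipschitz property and the specific constants $\tilde\theta=\min\{\pi/5,\theta/2\}$, $\tilde r=3r\sin\tilde\theta/(8(1+\sin\tilde\theta))$, and the threshold $c>\pi/(\theta\sin(\pi/8))$ are outputs of a concrete construction which your sketch gestures at (``optimizing the admissible range of $t$'') but never carries out. The measure bound (2), by contrast, is fine: $\Omega\setminus K_\delta\subset\{d\le c\delta\}$ does follow from (1), and the tube estimate for a bounded Lipschitz domain in $\R^2$ is standard.
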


\begin{prfof}{Theorem \ref{thm:l2sigmaapprox}} We choose $f_n$ to be the function obtained by a multiscale interpolation procedure as indicated, e.g., in \cite{legia10}. Namely, set $e_0=f$, iterate by setting, for $i=1,\ldots, n$,
	\begin{align}
		s_i=I_{\tilde{X}_i, \Psi_{\delta_{i}}}(e_{i-1}), \qquad e_i=e_{i-1}-s_i,
	\end{align}
	and eventually choose $f_n=\sum_{i=1}^n s_i=f-e_n$. By $I_{\tilde{X}_i, \Psi_{\delta_{i}}}$ we mean the interpolation operator with respect to the set $\tilde{X}_i$ and the scaled spherical basis function $\Psi_{\delta_i}$, as indicated in Section \ref{sec:spherbasis}. We now follow the steps from \cite{towwen13} in order to estimate $\|e_n\|_{L^2(\SS)}$ in our spherical setup. 
	
	First, we observe the following chain of inequalities:
	\begin{align}
		\|e_{i+1}\|_{\Psi_{\delta_{i+1}}} \leq C\|e_{i+1}\|_{\sobs{s}}\leq C\delta_{i+1}^{-s} \|e_{i+1}\|_{\Psi_{\delta_{i+1}}} \leq C\delta_{i+1}^{-s} \|e_i\|_{\Psi_{\delta_{i+1}}},
	\end{align}
	where the first two inequalities follow form Proposition \ref{prop:spectraofdilatedkernel}, and the last one follows form Proposition \ref{prop:projectioninnativespace} since $e_{i+1}=e_i-I_{\tilde{X}_{i+1},\Psi_{\delta_{i+1}}}(e_i)$. The constant $C>0$ depends on $s$ (throughout the course of this proof, we consider it a generic constant that may change; and only if its dependencies change, we will mention this explicitly). We can split $\|e_i\|_{\Psi_{\delta_{i+1}}}$ into
	\begin{align}\label{eqn:eip1est}
		\|e_i\|_{\Psi_{\delta_{i+1}}}^2 & =  \sum_{n=0}^\infty\sum_{k=-n}^{n}\frac{|(e_{i})^\wedge_{n,k}|^2}{\left(\Psi_{\delta_{i+1}}\right)^\wedge_n}\\ \nonumber
		&\leq c^{-1} \Bigg(\underbrace{\sum_{n\leq \delta_{i+1}^{-1}}\sum_{k=-n}^n|(e_{i})^\wedge_{n,k}|^2 (1+\delta_{i+1}n)^{2s}}_{=I_1} +\underbrace{\sum_{n> \delta_{i+1}^{-1}}\sum_{k=-n}^n|(e_{i})^\wedge_{n,k}|^2 (1+\delta_{i+1}n)^{2s}}_{=I_2}\Bigg),
	\end{align}
	where the constant $c>0$ ist the lower bound in \eqref{eqn:spectradecay} of Proposition \ref{prop:spectraofdilatedkernel} that depends on $s$. 
	
	For the low frequency part $I_1$, we get 
	\begin{align}
		I_1 \leq 2^{2s} \|e_{i}\|_{L^2(\SS)}^2 =2^{2s} \|e_{i}\|_{L^2(\Sigma)}^2 \leq C\,2^{2s} \|\sgp e_{i}\|_{L^2(\sgp(\Sigma))}^2.
	\end{align}
	The second equality follows from the observation that $e_i$ is locally supported in $\Sigma$ and the last inequality from Proposition \ref{prop:propertyofstereographicprojection}. Setting $\Omega=\sgp(\Sigma)$ and letting $K_{\tilde{C}(\delta_i+h_i)}$ be the set from Proposition \ref{prop:seperationofLipschitzdomain} (where $\tilde{C}>0$ denotes the constant from the upper bound in \eqref{eqn:upboundstereopt} of Proposition \ref{prop:propertyofstereographicprojection}), we can now further estimate% that depends on $s$
	\begin{align}
		I_1  \leq \underbrace{C 2^{2s} \|\sgp e_{i}\|_{L^2(K_{\tilde{C}(\delta_i+h_i)})}^2}_{=I_{1,1}} + \underbrace{C 2^{2s}\|\sgp e_{i}\|_{L^2(\Omega \setminus K_{\tilde{C}(\delta_i+h_i)})}^2}_{=I_{1,2}}
	\end{align}
	Using the notation from Proposition \ref{prop:seperationofLipschitzdomain} $i)$, we get
	\begin{align}
		K_{\tilde{C}(\delta_i+h_i)} &\subset \Omega_{\tilde{C}(\delta_i+h_i)}\subset \bigcup_{x\in\tilde{X}_i}\sgp(\mathcal{B}_{h_i}(x)\cap\SS)\subset \bigcup_{x\in\tilde{X}_i}\mathcal{B}_{\tilde{C}h_i}(\sgp(x)).
	\end{align}
	The above implies $h_{\sgp(\tilde{X}_i),K_{\tilde{C}(\delta_i+h_i)}}\leq \tilde{C}h_i$ for the mesh width. Additionally, we have by construction that $\sgp e_i|_{\sgp(\tilde{X}_i)}=0$. Therefore, we can use \cite[Theorem 2.12]{narwarwen05} to obtain the estimate $\|\sgp e_{i}\|_{L^2(K_{\tilde{C}(\delta_i+h_i)})}\leq \hat{C}(\tilde{C}h_i)^s \|\sgp e_{i}\|_{H^s(K_{\tilde{C}(\delta_i+h_i)})}$, where the constant $\hat{C}>0$ depends on $s$ and $\Omega$ but indendent of $i$, due to the interior cone property from Proposition \ref{prop:seperationofLipschitzdomain} $iii)$ and the discussion in \cite{narwarwen05}. Thus, we can continue to estimate
	\begin{align}\label{eqn:I11}
		I_{1,1} & \leq C h_i^{2s} \|\sgp e_i\|^2_{H^{s}(K_{\tilde{C}(\delta_i+h_i)})}\nonumber 
		\leq C h_i^{2s} \|\sgp e_i\|^2_{H^{s}(\Omega)}\\ 
		& \leq C h_i^{2s} \|e_i\|^2_{H^{s}(\SS)}\leq C \left(\frac{h_i}{\delta_i}\right)^{2s}  \|e_{i}\|_{\Psi_{\delta_{i}}}^2 \\ \nonumber
		&\leq  C \left(\frac{h_i}{\delta_i}\right)^{2s}  \|e_{i-1}\|_{\Psi_{\delta_{j}}}^2= C \nu^{-2s}  \|e_{i-1}\|_{\Psi_{\delta_{j}}}^2,
	\end{align}
	where the third inequality follows from Proposition \ref{prop:propertyofstereographicprojection}, the fourth from Proposition \ref{prop:spectraofdilatedkernel}, and the fifth from Proposition \ref{prop:projectioninnativespace}.	$I_{1,2}$ can be controlled using Propositions \ref{prop:seperationofLipschitzdomain} $ii)$, \ref{prop:propertyofstereographicprojection}, and \ref{Linftyboundedness}:
	\begin{align}\label{eqn:I12}
		I_{1,2} & \leq C  \mu\left(\Omega\setminus K_{\tilde{C}(\delta_{i}+h_i)}\right)\|\sgp e_i\|_{L^{\infty}(\Omega)}^2 \leq C  (\delta_i+h_i)\|e_i\|_{L^{\infty}(\SS)}^2 \nonumber\\ 
		& \leq C(\delta_i+h_i) \left(1+\bar{C}\right)^{2i}\|e_0\|_{L^{\infty}(\SS)}^2=  C\delta_i(1+\nu^{-1}) \left(1+\bar{C}\right)^{2i}\|f\|_{L^{\infty}(\SS)}^2\\ \nonumber
		& \leq C\frac{\delta_1}{\gamma}(1+\nu^{-1}) \left(\sqrt{\gamma}(1+\bar{C})\right)^{2i}\|f\|_{H^s(\SS)}^2=C\frac{\delta_1}{\gamma}(1+\nu^{-1}) \,\eta^{2i}\|f\|_{H^s(\SS)}^2,
	\end{align}
	where we have set $\eta=\sqrt{\gamma}(1+\bar{C})$. For the third inequality one needs to observe that $e_i=e_{i-1}-I_{\tilde{X}_i,\tilde{\Psi}_{\delta_i}}(e_{i-1})$ before applying Proposition \ref{Linftyboundedness} (the constant $\bar{C}>$ depends on $c_2$, $\nu$, and $s$). For the last inequality we have used that $H^s(\SS)$ is compactly embedded in $C^0(\SS)$, for $s>1$. 
	
	For the high frequency contribution $I_2$ we observe that, for $n>\delta_{i+1}^{-1}$,
	\begin{align*}
		(1+\delta_{i+1} n)^{2s}\leq (2 \delta_{i+1} n)^{2s}\leq 2^{2s} \left(\frac{\delta_{i+1}}{\delta_{i}}\right)^{2s}(1+\delta_{i}n)^{2s}.
	\end{align*}	
	Thus, it holds
	\begin{align}\label{eqn:I2}
		I_2  &\leq  2^{2s}\left(\frac{\delta_{i+1}}{\delta_{i}}\right)^{2s} \sum_{n=0}^\infty \sum_{k=-n}^{n}(e_i)^\wedge_{n,k}|^2(1+\delta_i n)^{2s}\nonumber \\ 
		&\leq  C\left(\frac{\delta_{i+1}}{\delta_{i}}\right)^{2s} \sum_{n=0}^\infty \sum_{k=-n}^{n}\frac{|(e_i)^\wedge_{n,k}|^2}{\left(\Psi_{\delta_{i}}\right)^\wedge_n}\\ \nonumber
		&\leq  C\gamma^{2s} \|e_{i}\|_{\Psi_{\delta_{i}}}^2 \leq C\gamma^{2s} \|e_{i-1}\|_{\Psi_{\delta_{i}}}^2 
	\end{align}
	For the second inequality we have used Proposition \ref{prop:spectraofdilatedkernel} and for the last one Proposition \ref{prop:projectioninnativespace}.
	
	Eventually, combining \eqref{eqn:I11}, \eqref{eqn:I12}, \eqref{eqn:I2} with \eqref{eqn:eip1est}, we obtain 
	\begin{align}\label{eqn:concatingI}
		\|e_i\|_{\Psi_{\delta_{i+1}}}^2 \leq C(\nu^{-2s} + \gamma^{2s})  \|e_{i-1}\|_{\Psi_{\delta_{i}}}^2 + C\frac{\delta_1(1+\nu^{-1})}{\gamma} \eta^{2i}\|f\|_{H^{s}(\SS)}^2.
	\end{align}
	For brevity, we set $\alpha^2=C(\nu^{-2s} + \gamma^{2s})$ and $C'=C\frac{\delta_1(1+\nu^{-1})}{\gamma}$. Iterating the estimate with respect to $i$ until $i=0$ is reached, this leads to
	\begin{align}\label{eqn:temp}
		\|e_i\|_{\Psi_{\delta_{i+1}}}^2 & \leq \alpha^{2(i-1)}\|f\|_{\Psi_{\delta_{1}}}^2+	C'\|f\|_{H^{s}(\SS)}^2 \sum_{k=1}^{i}\eta^{2k}\alpha^{2(i-k)}\\
		&\leq C \alpha^{2(i-1)}\|f\|_{H^{s}(\SS)}^2+	C'\frac{\eta^2(\eta^{2i}-\alpha^{2i})}{\eta^{2}-\alpha^{2}}\|f\|_{H^{s}(\SS)}^2, \nonumber
	\end{align}
	where Proposition \ref{prop:spectraofdilatedkernel} is used for the second inequality. Remembering that $\eta=\sqrt{\gamma}(1+\bar{C})$, one can choose $\nu$ sufficiently large and $\gamma$
	sufficiently small such that $\alpha<\eta$ and $\eta<1$ (note that $\bar{C}$ depends on $\nu$, so that one would first choose $\nu$ and subsequently adapt $\gamma$). Possibly increasing the constants $C$ and $C'$ appropriately, one can now cancel the contributions in \eqref{eqn:temp} that contain a factor $\alpha^{2i}$, and using the fact that $\|\cdot\|_{\Psi_{\delta_{i}}}$ uniformly controls $\|\cdot\|_{L^2(\SS)}$ regardless of varying $i$ by \eqref{eqn:spectradecay}, we end up with the desired result
	\begin{align}\label{eqn:errsimple}
		\|f-f_n\|_{L^2(\SS)}=\|e_n\|_{L^2(\SS)}\leq C\eta^{n}\|f\|_{H^{s}(\SS)}
	\end{align}
	The final constant $C$ as well as the previously fixed $\eta\in(0,1)$ depend on $\nu$, $\gamma$, $s$, $c_2$, and $h_1$(since $\delta_1=\nu h_1$). 
\end{prfof}

\subsection{Approximation in $\divfreezeroset$}\label{sec:approxvsigma}

We consider approximation in $V_\zeroset$ based on regularized fundamental solutions as introduced in Section \ref{sec:reggreen}.  First, a general approximation property of the regularized fundamental solution in Sobolev spaces on the sphere is shown. Afterwards, we proceed to locally harmonic functions on Lipschitz domains $\Sigma\subset\SS$.

\begin{prop}\label{lem:approximationbyregularizedgreenfunction}
	Let $L\in \mathbb{N}$ and $X\subset \SS$ be such that condition \eqref{eqn:meshnormcond} is satisfied. Furthermore, let $\kappa\in(0,\frac{1}{2}]$ and $\bar{x}\in\SS$ be fixed. Then, for $\rho\in(0,2)$, the following holds true: For every  $f$ in $\sobs{3+\kappa}$, there exists a $f_{\rho,L}$ in $\textnormal{span}\{1_{\SS},G^{\rho}_{x,\bar{x}}(\DeltaS;\cdot): x\in  X\}$ such that
	\begin{align}
		\|f-f_{\rho,L}\|_{\sobs{1}}\leq C\left(\rho^{\frac{1}{2}}+\rho ^{-\frac{1}{2}}L^{-(1+\kappa)}\right)\|\DeltaS f\|_{\sobs{1+\kappa}},\label{eqn:estsvsigma}
	\end{align}
    with a constant $C>0$ depending  on $\kappa$.
\end{prop}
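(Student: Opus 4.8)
The plan is to build $f_{\rho,L}$ out of Green's third formula, replacing the fundamental solution by its regularization $G^{\rho}(\DeltaS;\cdot,\cdot)$ and discretizing the resulting potential with a positive-weight cubature rule. Since $f\in\sobs{3+\kappa}$, Green's third formula gives $f(x)=\frac{1}{4\pi}\int_{\SS}f(y)\,d\omega(y)+\int_{\SS}G(\DeltaS;x,y)\,\DeltaSS{y}f(y)\,d\omega(y)$, and $\DeltaS f\in\sobs{1+\kappa}\subset C^0(\SS)$, so its point values exist. The mesh condition \eqref{eqn:meshnormcond} with Theorem \ref{thm:cubature2} supplies a cubature rule $Q_X$ with positive weights $w_i$ and polynomial precision $L$; I then set $f_{\rho,L}=\frac{1}{4\pi}\int_{\SS}f\,d\omega+\sum_i w_i\,\DeltaS f(x_i)\,G^{\rho}_{x_i,\bar{x}}$, which by \eqref{eqn:grxx} lies in $\textnormal{span}\{1_{\SS},G^{\rho}_{x,\bar{x}}:x\in X\}$ as required. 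Using $G^{\rho}_{x_i,\bar{x}}=G^{\rho}(\DeltaS;x_i,\cdot)-G^{\rho}(\DeltaS;\bar{x},\cdot)$ and the symmetry of $G^{\rho}$, the error splits as $f-f_{\rho,L}=A+B+C$, where $A(x)=\int_{\SS}\big(G(\DeltaS;x,y)-G^{\rho}(\DeltaS;x,y)\big)\DeltaS f(y)\,d\omega(y)$ is the regularization error, $B(x)=\int_{\SS}G^{\rho}(\DeltaS;x,y)\DeltaS f(y)\,d\omega(y)-\sum_i w_i G^{\rho}(\DeltaS;x,x_i)\DeltaS f(x_i)$ is the cubature error, and $C(x)=\big(\sum_i w_i\DeltaS f(x_i)\big)G^{\rho}(\DeltaS;\bar{x},x)$ is a correction arising from the passage to the differenced kernels.

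For the regularization error I would estimate $\|A\|_{\sobs{1}}$ through $\|A\|_{C^0(\SS)}+\|\grads A\|_{C^0(\SS)}$: by the spectral definition \eqref{eqn:innerprodhs} the $\sobs{1}$-norm is equivalent to $\|A\|_{\lts}+\|\grads A\|_{\lts}$, and on the finite-measure sphere these $L^2$-norms are dominated by the corresponding supremum norms. Both supremum norms are exactly what Proposition \ref{prop:regularizedgreenfunction}(2) controls, applied with density $\DeltaS f$, yielding $\|A\|_{C^0(\SS)}\leq C\rho|\ln\rho|\,\|\DeltaS f\|_{C^0(\SS)}$ and $\|\grads A\|_{C^0(\SS)}\leq C\rho^{\frac{1}{2}}\|\DeltaS f\|_{C^0(\SS)}$. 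Since $\rho|\ln\rho|\leq C\rho^{\frac{1}{2}}$ for small $\rho$ and $\sobs{1+\kappa}$ embeds into $C^0(\SS)$ because $1+\kappa>1$, this already gives $\|A\|_{\sobs{1}}\leq C\rho^{\frac{1}{2}}\|\DeltaS f\|_{\sobs{1+\kappa}}$, namely the first term of \eqref{eqn:estsvsigma}.

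The cubature error $B$ is the crux, and the step I expect to be the main obstacle. Estimating $\grads B$ directly in $C^0(\SS)$ via Theorem \ref{thm:cubature1} would require the $\sobs{1+\kappa}$-norm of $\gradss{x}G^{\rho}(\DeltaS;x,\cdot)$, which behaves like $\rho^{-(1+\kappa)/2}$ and thus loses a factor $\rho^{-\kappa/2}$ against the target rate. To recover the sharp $\rho^{-\frac{1}{2}}$ I would argue by duality, writing $\|B\|_{\sobs{1}}=\sup\{\langle B,g\rangle_{\lts}:g\in C^{\infty}(\SS),\,\|g\|_{\sobs{-1}}\leq1\}$. For such $g$, interchanging the order of integration and using the symmetry of $G^{\rho}$ shows that $\langle B,g\rangle_{\lts}$ is, up to sign, the cubature error of the single scalar function $h=(\DeltaS f)\,(G^{\rho}\ast g)$, with $G^{\rho}\ast g$ the zonal convolution \eqref{eqn:conv}. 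Theorem \ref{thm:cubature1} then gives $|\langle B,g\rangle_{\lts}|\leq C L^{-(1+\kappa)}\|h\|_{\sobs{1+\kappa}}$; Proposition \ref{thm:smoothnessofmultiplication} bounds $\|h\|_{\sobs{1+\kappa}}\leq C\|\DeltaS f\|_{\sobs{1+\kappa}}\|G^{\rho}\ast g\|_{\sobs{1+\kappa}}$; Proposition \ref{thm:smoothnessofconvolution} with $s=\nicefrac{3}{2}+\kappa$ and $k=-1$ gives $\|G^{\rho}\ast g\|_{\sobs{1+\kappa}}\leq C\|G^{\rho}(\DeltaS;x,\cdot)\|_{\sobs{\nicefrac{3}{2}+\kappa}}\|g\|_{\sobs{-1}}$; and since $\nicefrac{3}{2}+\kappa\leq2$ the last norm is controlled by $\|G^{\rho}(\DeltaS;x,\cdot)\|_{\sobs{2}}\sim\rho^{-\frac{1}{2}}$ from Proposition \ref{prop:regularizedgreenfunction}(1). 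Taking the supremum over $g$ delivers $\|B\|_{\sobs{1}}\leq C\rho^{-\frac{1}{2}}L^{-(1+\kappa)}\|\DeltaS f\|_{\sobs{1+\kappa}}$.

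Finally, the correction $C$ factors as $\big(\sum_i w_i\DeltaS f(x_i)\big)\,G^{\rho}(\DeltaS;\bar{x},\cdot)$; since $\int_{\SS}\DeltaS f\,d\omega=0$, the scalar prefactor is precisely the cubature error of $\DeltaS f$, hence bounded by $C L^{-(1+\kappa)}\|\DeltaS f\|_{\sobs{1+\kappa}}$ by Theorem \ref{thm:cubature1}, while $\|G^{\rho}(\DeltaS;\bar{x},\cdot)\|_{\sobs{1}}\leq\|G^{\rho}(\DeltaS;\bar{x},\cdot)\|_{\sobs{2}}\sim\rho^{-\frac{1}{2}}$ by Proposition \ref{prop:regularizedgreenfunction}(1), so $\|C\|_{\sobs{1}}\leq C\rho^{-\frac{1}{2}}L^{-(1+\kappa)}\|\DeltaS f\|_{\sobs{1+\kappa}}$. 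Summing the three bounds and absorbing constants yields \eqref{eqn:estsvsigma}. Apart from the duality step for $B$, the remaining work is routine; the only technicalities to verify are the Fubini interchange and the density argument letting $g$ range over smooth functions in the $\sobs{-1}$-duality, together with the elementary identity $\int_{\SS}\DeltaS f\,d\omega=0$ that turns the prefactor in $C$ into a genuine cubature error.
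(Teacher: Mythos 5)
Your proof is correct and follows essentially the same route as the paper: discretize the regularized Green representation with a positive-weight cubature rule, bound the regularization error via the sup-norm estimates of Proposition \ref{prop:regularizedgreenfunction}(2), and bound the cubature error by $\sobs{1}$--$\sobs{-1}$ duality combined with Theorem \ref{thm:cubature1}, the Banach algebra property, and the zonal convolution estimate, yielding the same $\rho^{-\nicefrac{1}{2}}L^{-(1+\kappa)}$ rate. The only (cosmetic) difference is that the paper absorbs the $\bar{x}$-term into the cubature error by noting that $\int_{\SS}\DeltaS f\,d\omega=0$ makes the corresponding continuous integral vanish, whereas you isolate it as a separate correction term and bound it as a scalar cubature error times $\|G^{\rho}(\DeltaS;\bar{x},\cdot)\|_{\sobs{1}}$; both give the same bound.
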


\begin{proof}
	We can assume $\langle f,1\rangle_{L^2(\SS)}=0$ for the remainder of the proof since constant functions are obviously contained in $\textnormal{span}\{1_{\SS},G^{\rho}_{x,\bar{x}}(\DeltaS;\cdot): x\in  X\}$. For ease of notation, we write $X=\{x_1,\ldots,x_N\}$, and $Q_X$ be a cubature rule with corresponding (not yet specified) weights $w_1,\ldots, w_N$. Setting $F=\DeltaS f$, we get $F\in\sobs{1+\kappa}$ and by Green's third theorem, we have $f=\int_{\SS}G(\DeltaS; y,\cdot)F(y)d\omega(y)$. In partiuclar, $F$ is continuous and allows pointwise evaluations. Finally, observing that $\int_{\SS}G^\rho(\DeltaS; \bar{x},\cdot)F(y)d\omega(y)=0$, we can write
	\begin{align}
		&	\left\|f-\sum_{x_i\in X}F(x_i)w_i\, G^{\rho}_{x_i,\bar{x}}\right\|_{\sobs{1}}\nonumber\\
		&=  \left\|\int_{\SS}G(\DeltaS; y,\cdot)F(y)d\omega(y)-\sum_{x_i\in X}F(x_i)w_i \, G^{\rho}_{x_i,\bar{x}}(\DeltaS;\cdot)\right\|_{\sobs{1}}\nonumber\\
		&\leq \underbrace{\left\|\int_{\SS}G(\DeltaS; y,\cdot)F(y)d\omega(y)-\int_{\SS}G^{\rho}(\DeltaS: y,\cdot)F(y)d\omega(y)\right\|_{\sobs{1}}}_{=I_1}\label{eqn:i1i2}\\
		&\quad+\underbrace{\left\|\int_{\SS}G^{\rho}_{y,\bar{x}}(\DeltaS: \cdot)F(y)d\omega(y)-\sum_{x_i\in  X}F(x_i)w_i\, G^{\rho}_{x_i,\bar{x}}(\DeltaS: \cdot)\right\|_{\sobs{1}}}_{=I_2}.\nonumber
	\end{align}
	The $I_1$ contribution can be estimated using Proposition \ref{prop:regularizedgreenfunction} ii):
	\begin{align}
		I_1^2&\leq \left\|\int_{\SS}G(\DeltaS; y,\cdot)F(y)d\omega(y)-\int_{\SS}G^{\rho}(\DeltaS: y,\cdot)F(y)d\omega(y)\right\|_{L^2(\SS)}^2\nonumber\\
		&\quad +\left\|\gradS\int_{\SS}G(\DeltaS; y,\cdot)F(y)d\omega(y)-\gradS\int_{\SS}G^{\rho}(\DeltaS: y,\cdot)F(y)d\omega(y)\right\|_{L^2(\SS,\R^3)}^2\nonumber\\
		&\leq C^2\|F\|_{C^{0}(\SS)}^2 \left(\rho^2 |\ln(\rho)|^2+\rho\right)\leq C^2\|F\|_{C^{0}(\SS)}^2\rho \leq C^2\|F\|_{H^{1+\kappa}(\SS)}^2\rho,\label{eqn:esti1}
	\end{align}
where the last estimate follows from the compact embedding of $H^s(\SS)$ in $C^{0}(\SS)$ for $s>1$. Throughout the course of the proof, $C>0$ is a generic constant that may change. In order to estimate $I_2$, we first observe that, since condition \eqref{eqn:meshnormcond} is satisfied by assumption, Theorem \ref{thm:cubature2} guarantees the existence of a cubature rule $Q_X$ with polynomial precision of degree $L$ and positive weights $w_1,\ldots, w_N>0$. From now on we fix the weights appearing in our computations to those corresponding to such a particular cubature rule. We can estimate the $\sobs{1}$ norm using the principle of duality 
	\begin{align}
		I_2&=\left\|\int_{\SS}G^{\rho}_{y,\bar{x}}F(y)d\omega(y)-\sum_{x_i\in  X}F(x_i)w_i\, G^{\rho}_{x_i,\bar{x}}(\DeltaS;\cdot)\right\|_{\sobs{1}}\nonumber\\
		&=  \sup_{g\in C^{\infty}(\SS),\atop \|g\|_{\sobs{-1}}\leq 1} \left|\int_{\SS} g(z)\left(\int_{\SS}G^{\rho}_{y,\bar{x}}(\DeltaS; z)F(y)d\omega(y)-\sum_{x_i\in  X}F(x_i)w_i\, G^{\rho}_{x_i,\bar{x}}(\DeltaS;z)\right) d\omega(z) \right|\nonumber
		\\&=  \sup_{g\in C^{\infty}(\SS),\atop\|g\|_{\sobs{-1}}\leq 1} \left|\int_{\SS}\left(\int_{\SS} g(z)G^{\rho}_{y,\bar{x}}(\DeltaS; z)d\omega(z)\right)F(y)d\omega(y)-\sum_{x_i\in  X}F(x_i)w_i \int_{\SS} g(z)G^{\rho}_{x_i,\bar{x}}(\DeltaS; z)d\omega(z) \right|\nonumber\\
		&=  \sup_{g\in C^{\infty}(\SS),\atop\|g\|_{\sobs{-1}}\leq 1} \left|\int_{\SS}H_g^\rho(y)F(y)d\omega(y)-\sum_{x_i\in  X}F(x_i)w_i \,H_g^\rho(x_i) \right|,\label{eqn:esti2}
	\end{align}\normalsize
	where we have used the abbreviation $H_g^\rho=\int_{\SS} g(z)G^{\rho}_{\cdot,\bar{x}}(\DeltaS; z)d\omega(z)$. The last expression in \eqref{eqn:esti2} is precisely the cubature error for the integrand $H_g^\rho \,F$. Thus we can use Theorem \ref{thm:cubature1} to estimate
	\begin{align}
		I_2\leq \frac{C}{L^{1+\kappa}}\|H_g^\rho\,F\|_{\sobs{1+\kappa}}\label{eqn:esti22}
	\end{align}
	The constant $C>0$ appearing here depends on the choice of $\kappa$. Applying Proposition \ref{thm:smoothnessofconvolution} and \ref{thm:smoothnessofmultiplication}, we see that $H_g^\rho \, F$ lies in $\sobs{1+\kappa}$ since we have assumed $\kappa\leq \frac{1}{2}$. In particular, together with H\"older's inequality, we get% for all $g$ with $ \|g\|_{\sobs{-1}}\leq 1$,
	\begin{align}
		\|H^{\rho}_{g}\, F \|_{\sobs{1+\kappa}} & \leq C \|H^{\rho}_{g}\|_{\sobs{1+\kappa}} \|F\|_{\sobs{1+\kappa}}\nonumber\\
		& \leq C \|H^{\rho}_{g}\|_{\sobs{1+\frac{1}{2}}} \|F\|_{\sobs{1+\kappa}}\nonumber\\
		&\leq C \left(\|G^{\rho}(\DeltaS;\bar{x},\cdot)\|_{\sobs{2}}+\|G^{\rho}(\DeltaS;\bar{x},\cdot)\|_{\sobs{1}}\right) \|g\|_{\sobs{-1}} \|F\|_{\sobs{1+\kappa}}\nonumber\\
		&\leq C\rho^{-\tfrac{1}{2}}\|F\|_{\sobs{1+\kappa}}.
	\end{align}
	The last inequality follows from Proposition \ref{prop:regularizedgreenfunction} $i)$ and from the condition in \eqref{eqn:esti2} that $\|g\|_{\sobs{-1}}\leq 1$. Therefore, combining the above with \eqref{eqn:i1i2}, \eqref{eqn:esti1}, and \eqref{eqn:esti2} yields the desired estimate \eqref{eqn:estsvsigma}.
\end{proof}

Based on the above, we can verify the following approximation property in $S\divfreezeroset$, the space of functions that are locally harmonic in a subdomain $\Sigma$. Note that for Proposition \ref{lem:approximationbyregularizedgreenfunction} we would not need to use $G^{\rho}_{x,\bar{x}}(\DeltaS;\cdot)$, with the auxiliary fixed center $\bar{x}\in\SS$, to obtain the desired estimate; solely approximating with $G^{\rho}(\DeltaS;x,\cdot)$ would yield the same estimate. However, when interested in locally harmonic functions, using $G^{\rho}_{x,\bar{x}}(\DeltaS;\cdot)$ is more appropriate because it is locally harmonic itself, namely, $\DeltaSS{y} G^{\rho}_{x,\bar{x}}(\DeltaS;y)=0$ for all $y\notin \mathcal{C}_\rho(x)\cup \mathcal{C}_\rho(\bar{x})$.

\begin{cor}\label{lem:estsvsigma}
Let $L\in \mathbb{N}$ and $X\subset \SS$ be such that the condition \eqref{eqn:meshnormcond} is satisfied, and $\bar{x}\in\SS$ be fixed. Furthermore, let $\kappa\in (0,\frac{1}{2}]$.  Then, for $\rho\in(0,2)$, the following holds true: For every $f$ in $S\divfreezeroset\cap \sobs{3+\kappa}$, there exist a $f_{\rho,L}$ in $\textnormal{span}\{1_\SS,G^{\rho_n}_{x,\bar{x}}: x\in \zeroset^{c}\cap X\}$ such that
	\begin{align}
		\|f-f_{\rho,L}\|_{\sobs{1}}\leq C\left(\rho^{\nicefrac{1}{2}}+\rho ^{-\nicefrac{1}{2}}L^{-(1+\kappa)}\right)\|\DeltaS f\|_{\sobs{1+\kappa}},\label{eqn:estsvsigma2}
	\end{align}
	with a constant $C>0$ depending  on $\kappa$.
\end{cor}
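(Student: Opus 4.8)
The plan is to obtain Corollary \ref{lem:estsvsigma} as an immediate consequence of Proposition \ref{lem:approximationbyregularizedgreenfunction}, using the hypothesis $f\in S\divfreezeroset$ only to discard the superfluous centers. The error bound \eqref{eqn:estsvsigma2} is verbatim the bound \eqref{eqn:estsvsigma}, so the sole new task is to check that the approximant produced by the proposition already lives in the smaller span $\textnormal{span}\{1_\SS,G^{\rho}_{x,\bar{x}}:x\in\zeroset^c\cap X\}$; once this is established, no estimate has to be redone.

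First I would unpack the membership $f\in S\divfreezeroset$. By the definition \eqref{eqn:vs} of $V_\Sigma$, writing $f=Sg$ with $g\in V_\Sigma$ means precisely that $\Delta_\SS f=\Delta_\SS(Sg)=0$ on the open set $\Sigma$. Setting $F=\Delta_\SS f$, exactly as in the proof of Proposition \ref{lem:approximationbyregularizedgreenfunction}, we therefore have $F=0$ on $\Sigma$. Since $f\in\sobs{3+\kappa}$ gives $F\in\sobs{1+\kappa}$ and $1+\kappa>1$, the compact embedding $H^s(\SS)\hookrightarrow C^0(\SS)$ for $s>1$ shows that $F$ is continuous. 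A continuous function that vanishes on the open set $\Sigma$ vanishes on its closure $\overline{\Sigma}$; in particular $F(x_i)=0$ for every node $x_i\in X\cap\overline{\Sigma}$, including any node that happens to lie on the boundary $\partial\Sigma$.

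With this observation I would invoke the approximant constructed inside the proof of Proposition \ref{lem:approximationbyregularizedgreenfunction}, namely $f_{\rho,L}=c+\sum_{x_i\in X}F(x_i)\,w_i\,G^{\rho}_{x_i,\bar{x}}$, where $c$ is the constant function absorbing the mean of $f$ (permissible since $1_\SS$ lies in the span). Because $F(x_i)=0$ for all $x_i\in X\cap\overline{\Sigma}$, every surviving summand has its center in $\SS\setminus\overline{\Sigma}=\zeroset^c$, so that $f_{\rho,L}=c+\sum_{x_i\in X\cap\zeroset^c}F(x_i)\,w_i\,G^{\rho}_{x_i,\bar{x}}$ already belongs to $\textnormal{span}\{1_\SS,G^{\rho}_{x,\bar{x}}:x\in\zeroset^c\cap X\}$. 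Since truncating the vanishing terms does not change the value of $f_{\rho,L}$, the estimate \eqref{eqn:estsvsigma2} is literally \eqref{eqn:estsvsigma} for this $f$, completing the argument.

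There is essentially no genuine obstacle here; the entire argument rests on upgrading the distributional vanishing of $\Delta_\SS f$ on $\Sigma$ to pointwise vanishing on the closure $\overline{\Sigma}$, which is exactly what the continuity of $F$ --- guaranteed by the Sobolev embedding together with the assumption $\kappa>0$ --- delivers. The one point worth stating carefully is the treatment of nodes of $X$ lying on $\partial\Sigma$: such nodes belong to neither $\Sigma$ nor $\zeroset^c$, yet continuity forces $F$ to vanish on them as well, so they contribute nothing and need not appear in the final span. This is precisely why the reasoning is phrased through $\overline{\Sigma}$ rather than through $\Sigma$ itself.
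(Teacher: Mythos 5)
Your proposal is correct and follows essentially the same route as the paper: both observe that $F=\Delta_\SS f$ vanishes on $\Sigma$ for $f\in S\divfreezeroset\cap\sobs{3+\kappa}$, so the approximant from Proposition \ref{lem:approximationbyregularizedgreenfunction} loses no terms when the centers in $\Sigma$ are discarded, and the estimate \eqref{eqn:estsvsigma} carries over verbatim. Your additional remark that continuity of $F$ (via the Sobolev embedding, using $\kappa>0$) upgrades the vanishing to $\overline{\Sigma}$ and thus also disposes of nodes on $\partial\Sigma$ is a careful touch the paper leaves implicit, but it does not change the argument.
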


\begin{proof}
We have $F(x)=\DeltaS f(x)=0$ for all $x\in\zeroset$ due to our assumption that $f$ is in $S\divfreezeroset\cap \sobs{3+\kappa}$. Thus, observing \eqref{eqn:i1i2} in the proof of Proposition \ref{lem:approximationbyregularizedgreenfunction}, no $G^{\rho}_{x_i,\bar{x}}$ with $x_i\in \zeroset\cap X$ is required to achieve the estimate from Proposition \ref{lem:approximationbyregularizedgreenfunction}, which already yields the result desired here.
\end{proof}

Before proceeding to the main assertion, we consider the following proposition that helps to guarantee a favorable behaviour of pointsets near the boundary of Lipschitz domains.

\begin{prop}\label{prop:movingpointsinLipschitzdomain}
	Let $\Sigma\subset\SS$, with $\overline{\Sigma}\not=\SS$, be a Lipschitz domain with connected boundary. Then there exist constants $C,r_0>0$ such that, for any $r<r_0$ and $x\in\Sigma$, there exists a $x'\in\Sigma$ satisfying $\mathcal{B}_{r}(x')\cap\SS\subset\Sigma$ and $|x-x'|\leq Cr$. 
\end{prop}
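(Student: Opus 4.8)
The plan is to note that the asserted property is trivial away from $\partial\Sigma$ and to reduce the essential case to the interior cone condition enjoyed by Lipschitz domains. Let $\kappa\geq 1$ be the (harmless, $r_0$-dependent) comparability constant between chordal and geodesic distance on $\SS$ for points at distance below $r_0<2$. If $\operatorname{dist}(x,\partial\Sigma)\geq\kappa r$, I would simply take $x'=x$: then no point of $\partial\Sigma$ lies within Euclidean distance $r$ of $x$, so the spherical cap $\mathcal{B}_r(x)\cap\SS$, being connected and containing $x\in\Sigma$, cannot meet $\partial\Sigma$ and therefore lies in $\Sigma$. The entire content of the statement thus concerns points $x$ within distance $\kappa r$ of the boundary.

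For such $x$, the plan is to exploit a \emph{uniform interior cone condition}: there are an opening angle $\theta\in(0,\tfrac\pi2)$ and a height $h>0$, independent of the boundary point, such that at every $\xi\in\partial\Sigma$ there is a tangential direction $\mathbf v_\xi$ pointing into $\Sigma$ for which the geodesic cone $K(\xi,\mathbf v_\xi,\theta,h)$ is contained in $\overline\Sigma$; uniform parameters exist because $\partial\Sigma$ is compact and Lipschitz. I would then pick $\xi\in\partial\Sigma$ realizing $\operatorname{dist}(x,\partial\Sigma)$ and let $x'$ be the point reached from $\xi$ along $\mathbf v_\xi$ at distance $t=\kappa r/\sin\theta$. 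Elementary cone geometry gives $\operatorname{dist}(x',\partial K)=\min\{t\sin\theta,\,h-t\}$, so the open ball of that radius about $x'$ lies in the interior of $K$, hence in $\Sigma$. Choosing $r_0$ so small that $h-t\geq\kappa r$ for all $r<r_0$, this radius is at least $\kappa r$, whence $\operatorname{dist}(x',\partial\Sigma)\geq\kappa r$ and, as in the first paragraph, $\mathcal{B}_r(x')\cap\SS\subset\Sigma$. The distance bound follows from the triangle inequality $\operatorname{dist}(x,x')\leq\operatorname{dist}(x,\xi)+t\leq\kappa r+\kappa r/\sin\theta$, giving $|x-x'|\leq Cr$ with $C=\kappa\,(1+1/\sin\theta)$.

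The main obstacle is making the two spherical ingredients rigorous and uniform: the comparability constant $\kappa$ (innocuous for $r<r_0<2$) and, more seriously, the passage from the graph form of the Lipschitz condition to uniform cone parameters $(\theta,h)$ together with the cone computation, which on $\SS$ would require some spherical trigonometry. I would circumvent this by transferring to the plane. Since $\overline\Sigma\neq\SS$, fix a pole $\mathbf p\in\Sigma^c$, so that $\overline\Sigma\subset\SS\setminus\{\mathbf p\}$ and $\Omega=\sgp(\Sigma)\subset\R^2$ is a bounded Lipschitz domain. In $\R^2$ the interior cone condition holds with \emph{straight} cones (exactly as used in Proposition \ref{prop:seperationofLipschitzdomain}), the cone computation above is exact, and the resulting planar statement—for every $u\in\Omega$ and small $\rho$ there is $u'\in\Omega$ with $B_\rho(u')\subset\Omega$ and $|u-u'|\leq C_\Omega\rho$—is clean. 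Pulling this back through the bi-Lipschitz estimates of Proposition \ref{prop:propertyofstereographicprojection}, applied to $x$, $x'$ and to the radii, yields the claim on the sphere, with $C$ and $r_0$ absorbing the bi-Lipschitz factors of that proposition.
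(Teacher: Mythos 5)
Your proposal is correct and, in its final form (third paragraph), follows essentially the same route as the paper: project stereographically to a bounded planar Lipschitz domain, invoke the interior cone condition there to find the shifted center with an interior ball (the paper simply cites \cite[Lemma 3.7]{wendland05} for this planar step rather than redoing the cone computation), and pull back via the bi-Lipschitz estimates of Proposition \ref{prop:propertyofstereographicprojection}. The preliminary case split and the sketched geodesic-cone argument are an unnecessary detour, but since you correctly identify their difficulties and replace them by the planar transfer, the argument is sound.
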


\begin{proof}
Let $R$ and $\theta$ be the radius and angle parameters of the interior cone condition satisfied by $\sgp(\Sigma)\subset \R^2$. Then, for any $h\in(0,(1+\sin \theta)^{-1}R)$ and any $x\in\Sigma$, there exists $y\in\sgp(\Sigma)$ with $\mathcal{B}_{h \sin\theta}(y)\subset\sgp(\Sigma)$ and $\|y-\sgp(x)\|=h$ (see, e.g., \cite[Lemma 3.7]{wendland05}). Now, let $r=\bar{C}^{-1}h\sin\theta$ (where $\bar{C}>0$ is the constant from the upper bound in \eqref{eqn:upboundstereopt} of Proposition \ref{prop:propertyofstereographicprojection}) and take $x'=\sgp^{-1}(y)$. Then it holds that $\mathcal{B}_{r}(x')\cap\SS \subset \sgp^{-1}\mathcal{B}_{h \sin\theta}(y)\subset\Sigma$ and $\|x'-x\|\leq \bar{c}^{-1}\|y-\sgp(x)\| \leq \bar{c}^{-1}\bar{C}(\sin\theta)^{-1} r$  (where $\bar{c}>0$ is the constant from the lower bound in \eqref{eqn:upboundstereopt} of Proposition \ref{prop:propertyofstereographicprojection}). Choosing  $C=\bar{c}^{-1}\bar{C}(\sin\theta)^{-1}$ and $r_0=\bar{C}^{-1}\sin\theta(1+\sin \theta)^{-1}R$ finishes the proof.
\end{proof}

\begin{thm}\label{thm:densevsigma}
	Let $X_n\subset \SS$ and $L_n\in\mathbb{N}$ (with $L_n\to\infty$ monotonically) be such that condition  \eqref{eqn:meshnormcond} is satisfied and, additionally, such that there exists a constant $\bar{c}>0$ with $h_{X_n}>\bar{c}L_n^{-1}$ for all $n\in\mathbb{N}$. Furthermore, we define $\rho_n=L_n^{-2}$ and $\tilde{X}_n=\{x\in X_n:\mathcal{C}_{\rho_n}(x)\subset\Sigma^c\}$, and we assume $\bar{x}\in\Sigma^c$ with $\mathcal{C}_{\rho_1}(\bar{x})\subset\Sigma^c$ and $\kappa\in(0,\frac{1}{2}]$ to be fixed. If $\rho_1$ is sufficiently small, then the following holds true: For every $f$ in $H^{2+\kappa}(\SS)\cap V_\Sigma$, there exists a $f_n$ in $\textnormal{span}\{1_\SS,S^{-1}G^{\rho_n}_{x,\bar{x}}:x\in\tilde{X}_n\}\subset V_\Sigma$ with
	\begin{align}
		\|f-f_n\|_{L^2(\SS)}\leq C h_{X_n}^{\kappa}\|f\|_{H^{2+\kappa}(\SS)},\label{eqn:estreghx}
	\end{align}
where the constant $C>0$ depends on $\kappa$ and $\bar{c}$.
\end{thm}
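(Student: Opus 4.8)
The goal is to approximate an arbitrary $f\in H^{2+\kappa}(\SS)\cap V_\Sigma$ in the $L^2(\SS)$-norm by elements of $\operatorname{span}\{1_\SS,S^{-1}G^{\rho_n}_{x,\bar{x}}:x\in\tilde{X}_n\}$. The key structural observation is that membership in $V_\Sigma$ means $Sf$ is harmonic on $\Sigma$, i.e., $Sf\in S V_\Sigma$ is locally harmonic. The plan is therefore to transfer the problem through the single layer potential $S$: since $S$ is bounded and invertible $L^2(\SS)\to H^1(\SS)$, approximating $f$ in $L^2(\SS)$ is equivalent (up to bounded constants) to approximating $Sf$ in $H^1(\SS)$. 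So I would set $g=Sf$, apply Corollary \ref{lem:estsvsigma} to the locally harmonic function $g$ to produce an approximant $g_{\rho_n,L_n}\in\operatorname{span}\{1_\SS,G^{\rho_n}_{x,\bar{x}}:x\in\zeroset^c\cap X_n\}$, and then pull back via $S^{-1}$ to obtain $f_n=S^{-1}g_{\rho_n,L_n}$, which automatically lands in the claimed span and in $V_\Sigma$ (since each $S^{-1}G^{\rho_n}_{x,\bar{x}}$ has single layer potential $G^{\rho_n}_{x,\bar{x}}$, harmonic away from the caps, and the centers are chosen in $\Sigma^c$).

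The main steps, in order, are as follows. First I would verify that $g=Sf$ has the regularity required by Corollary \ref{lem:estsvsigma}, namely $g\in S V_\Sigma\cap H^{3+\kappa}(\SS)$: the condition $f\in V_\Sigma$ gives $g\in S V_\Sigma$, and the smoothing property of $S$ (which raises Sobolev order by $1$, as seen from $(Sf)^\wedge_{n,k}=-\tfrac{1}{2n+1}\hat f_{n,k}$) turns $f\in H^{2+\kappa}(\SS)$ into $g\in H^{3+\kappa}(\SS)$. Second, I would invoke Corollary \ref{lem:estsvsigma} with the given $\rho_n,L_n,\bar{x}$ to obtain $g_{\rho_n,L_n}$ with
\begin{align}
	\|g-g_{\rho_n,L_n}\|_{\sobs{1}}\leq C\left(\rho_n^{\nicefrac{1}{2}}+\rho_n^{-\nicefrac{1}{2}}L_n^{-(1+\kappa)}\right)\|\DeltaS g\|_{\sobs{1+\kappa}}.
\end{align}
Third, I would substitute the choice $\rho_n=L_n^{-2}$, which balances the two error terms, giving $\rho_n^{1/2}=L_n^{-1}$ and $\rho_n^{-1/2}L_n^{-(1+\kappa)}=L_n^{-\kappa}$, so that the bracket is $\leq C(L_n^{-1}+L_n^{-\kappa})\leq C L_n^{-\kappa}$ for $\kappa\le 1$. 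Finally, using the lower bound $h_{X_n}>\bar{c}L_n^{-1}$ to convert $L_n^{-\kappa}\leq \bar{c}^{-\kappa}h_{X_n}^{\kappa}$, applying the boundedness of $S^{-1}$ from $H^1(\SS)$ to $L^2(\SS)$ to pass from $\|g-g_{\rho_n,L_n}\|_{H^1(\SS)}$ to $\|f-f_n\|_{L^2(\SS)}$, and bounding $\|\DeltaS g\|_{H^{1+\kappa}(\SS)}=\|\DeltaS Sf\|_{H^{1+\kappa}(\SS)}\leq C\|f\|_{H^{2+\kappa}(\SS)}$ (again by the spectral behaviour of $S$ and $\DeltaS$), I would assemble the estimate \eqref{eqn:estreghx}.

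There are two points requiring care. The first is the relationship between the index set $\tilde{X}_n$ appearing in the theorem, defined via spherical caps $\mathcal{C}_{\rho_n}(x)\subset\Sigma^c$, and the index set $\zeroset^c\cap X_n$ delivered by Corollary \ref{lem:estsvsigma}: I must ensure that $g_{\rho_n,L_n}$ can be expressed using only centers $x$ with $\mathcal{C}_{\rho_n}(x)\subset\Sigma^c$, so that the resulting $f_n$ genuinely lies in $V_\Sigma$ (each basis function must have its Laplacian supported in $\Sigma^c$). The function $g_{\rho_n,L_n}$ is built from the cubature values $F(x_i)=\DeltaS g(x_i)$, and since $g$ is harmonic on $\Sigma$ we have $F(x_i)=0$ for centers in $\Sigma$; thus only centers in $\Sigma^c$ contribute, and shrinking to those whose full cap $\mathcal{C}_{\rho_n}$ lies in $\Sigma^c$ is handled by Proposition \ref{prop:movingpointsinLipschitzdomain}, which lets us relocate near-boundary points inward by a controlled amount while preserving the mesh-width bounds — this is the technical heart and the step I expect to be the main obstacle, since it requires that the discarded boundary layer contributes negligibly to the cubature sum and that the relocation does not destroy the cubature error estimate. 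The second, milder point is confirming that the constant $C$ ends up depending only on $\kappa$ and $\bar{c}$ (the domain-dependence being absorbed into the standing convention that constants may depend on $\Sigma$), which follows by tracking dependencies through Corollary \ref{lem:estsvsigma} and the fixed operator norms of $S,S^{-1}$.
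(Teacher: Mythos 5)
Your proposal is correct and follows essentially the same route as the paper: transfer to $g=Sf\in H^{3+\kappa}(\SS)$, apply Corollary \ref{lem:estsvsigma}, balance the two error terms via $\rho_n=L_n^{-2}$ to get $\rho_n^{\kappa/2}=L_n^{-\kappa}\leq\bar{c}^{-\kappa}h_{X_n}^{\kappa}$, and pull back with the bounded invertibility of $S$. On the one step you flag as the main obstacle, the paper's mechanism is slightly cleaner than "the discarded boundary layer contributes negligibly to the cubature sum": one forms the thinned set $X_n'=(X_n\cap\Sigma)\cup\tilde{X}_n$, uses Proposition \ref{prop:movingpointsinLipschitzdomain} to show $h_{X_n'}\leq\tilde{C}h_{X_n}$, and then runs Corollary \ref{lem:estsvsigma} with a fresh cubature rule on $X_n'$ (of comparable polynomial precision), whereupon the vanishing of $\DeltaS Sf$ on $\Sigma$ leaves exactly the centers in $\tilde{X}_n$.
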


\begin{proof}
	Due to the definition of $\tilde{X}_n$ and the choice of $\bar{x}$, the properties of $G^{\rho_n}_{x,\bar{x}}$ yield that $S^{-1}G^{\rho_n}_{x,\bar{x}}$ is contained in $V_\zeroset$. 
	
	If $\rho_1$ is sufficiently small, it holds that $\sqrt{2\rho_n}+h_{X_n}<r_0$, where $r_0$ is the constant from Proposition \ref{prop:movingpointsinLipschitzdomain}. Thus, for every $x\in \{y\in\Sigma^c:\mathcal{C}_{\rho_n}(y)\not\subset\Sigma^c\}$, there exists a $x'\in \Sigma^c$ with $\mathcal{B}_{\sqrt{2\rho_n}+h_{X_n}}(x')\cap\SS\subset\Sigma^c$ and $|x-x'|\leq C'(\sqrt{2\rho_n}+h_{X_n})$ (where $C'>0$ is the constant from Proposition \ref{prop:movingpointsinLipschitzdomain}). For this particular $x'$, by construction, there must exist a $\tilde{x}\in\tilde{X}_n$ with $|x-\tilde{x}|<h_{X_n}$. In consequence, this means that, for every $x\in\Sigma^c$, there must exist some $\tilde{x}\in\tilde{X}_n$ with $|x-\tilde{x}|\leq 2h_{X_n}+C'(\sqrt{2\rho_n}+h_{X_n})\leq (2+C'+\sqrt{2}C'\bar{c}^{-1})h_{X_n}$. This leads us to 
	\begin{align}
		h_{\tilde{X}_n,\Sigma^c}\leq \tilde{C} h_{X_n},
	\end{align}
	where $\tilde{C}>0$ depends on $\bar{c}$. Applying a similar argument to $\Sigma$ instead of $\Sigma^c$, we obtain for the overall set $X_n'=\{x\in X_n: x\in \Sigma \textnormal{ or }x\in\tilde{X}_n\}$ that $h_{X'_n}\leq \tilde{C} h_{X_n}$.
	
	From Corollary \ref{lem:estsvsigma} and the bounded invertibility of $S:L^2(\SS)\to \sobs{1}$, we now get the existence of a $f_n$ in $\textnormal{span}\{1_\SS,S^{-1}G^{\rho_n}_{x,\bar{x}}:x\in\tilde{X}_n\}\subset V_\Sigma$ such that 
	\begin{align}
		\|f-f_n\|_{L^2(\SS)}&\leq C\|Sf-Sf_n\|_{\sobs{1}}\nonumber
		\\&\leq C\left(\rho_n^{\frac{1}{2}}+\rho_n ^{-\frac{1}{2}}L_n^{-(1+\kappa)}\right)\|\DeltaS Sf\|_{\sobs{1+\kappa}}\nonumber
		\\&=C\left(\rho_n^{\frac{1}{2}}+\rho_n ^{-\frac{1}{2}} \rho_n^{\frac{1+\kappa}{2}}\right)\|\DeltaS Sf\|_{\sobs{1+\kappa}}\nonumber
		\\&\leq C\rho_n^{\frac{\kappa}{2}}\|f\|_{\sobs{2+\kappa}},\label{eqn:estsvsigma3}
	\end{align}
	where $C>0$ is a generic constant, which may change in every step of estimation but which eventually depends on $\kappa$ and $\bar{c}$. The desired assertion \eqref{eqn:estreghx} follows from the observation that $\rho_n=L_n^{-2}\leq\bar{c}^{-2}h_{X_n}^2$. 
\end{proof}

\subsection{Approximation in the subspaces $D_{\pm,\Sigma}$}\label{sec:approxhardy}

Due to the structure of $D_{+,\Sigma}$ and $D_{-,\Sigma}$, the considerations in the two previous Sections \ref{sec:approxl2s} and \ref{sec:approxvsigma} directly provide the following theorem.

\begin{thm}\label{thm:densedsigma+}
Let $X'_n\subset\SS$ and $\delta_n>0$ satisfy the conditions of Theorem \ref{thm:l2sigmaapprox} (with $\Sigma$ substituted by $\Sigma^c$). Furthermore, let $X_n\subset\SS$ and $\rho_n>0$ satisfy the conditions of Theorem \ref{thm:densevsigma}, and $\bar{x}\in\Sigma^c$ with $\mathcal{C}_{\rho_1}(\bar{x})\subset\Sigma^c$ be fixed. We define the sets $\tilde{X}'_n=\{x'\in X'_n:\mathcal{B}_{\delta_n}(x')\cap\SS\subset\Sigma^c\}$ and $\tilde{X}_n=\{x\in X_n:\mathcal{C}_{\rho_n}(x)\subset\Sigma^c\}$. Additionally, let $\Psi$ be as in Definition \ref{def:dilatioylkernels} with native space $H^{s}(\SS)$, for $s=2+\kappa$ with a fixed $\kappa\in(0,\frac{1}{2}]$. Then the following holds true: For every $f$ in $D_{+,\Sigma}/\langle1\rangle$, there exists a $f_n$ in $\textnormal{span}\big\{(K+\frac{1}{2}I)S^{-1}G^{\rho_n}_{x,\bar{x}},\,\Psi_{\delta_i,x_i'}:\,x\in \tilde{X}_n,\,x_i'\in\tilde{X}'_i,\,i=1,\ldots,n\big\}\subset D_{+,\Sigma}$ with
\begin{align}
	\|f-f_n\|_{L^2(\SS)}\leq C \left(\eta^{n}+h_{X_n}^{\kappa}\right)\left(\|f\|_{\sobs{s}}+\|\ptm (f)\|_{\sobs{s}}\right).\label{eqn:dsigmaest}
\end{align}
Analogously, for every $g$ in $D_{-,\Sigma}$, there exists a $g_n$ in $\textnormal{span}\big\{(K-\frac{1}{2}I)S^{-1}G^{\rho_n}_{x,\bar{x}},\,\Psi_{\delta_i,x_i'}:\,x\in \tilde{X}_n,\,x_i'\in\tilde{X}'_i,\,i=1,\ldots,n\big\}\subset D_{-,\Sigma}$ with
\begin{align}
	\|g-g_n\|_{L^2(\SS)}\leq C \left(\eta^{n}+h_{X_n}^{\kappa}\right)\left(\|\mtp (g)\|_{\sobs{s}}+\|g\|_{\sobs{s}}\right).\label{eqn:dsigmaest2}
\end{align}
The constants $C>0$ and  $\eta\in(0,1)$ depend on the parameters listed in Theorems \ref{thm:l2sigmaapprox} and \ref{thm:densevsigma}.
\end{thm}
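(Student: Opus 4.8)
The plan is to split $f$ along the structure \eqref{eqn:dsp}, $D_{+,\Sigma}=L^2(\Sigma^c)+(K+\tfrac{1}{2}I)V_\Sigma$, and to approximate the $L^2(\Sigma^c)$-summand with Theorem \ref{thm:l2sigmaapprox} and the $V_\Sigma$-summand with Theorem \ref{thm:densevsigma}. The delicate point is to arrange this splitting so that both summands are controlled simultaneously by $\|f\|_{\sobs{s}}$ and $\|\ptm(f)\|_{\sobs{s}}$, and here the operator $\ptm$ furnishes exactly the right choice. Concretely, I would set $u=f+\ptm(f)$; by the formula \eqref{eqn:ptm} this equals $[P_\Sigma(K+\tfrac{1}{2}I)]^{-1}P_\Sigma f$, so that $u\in V_\Sigma$ (this membership is precisely the mapping property underlying the construction of $\ptm$ in Theorem \ref{thm:tauptm} and \cite{gerhuakeg23}) and, by definition of the inverse, $P_\Sigma(K+\tfrac{1}{2}I)u=P_\Sigma f$. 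Putting $f_{(1)}=f-(K+\tfrac{1}{2}I)u$, this last identity gives $P_\Sigma f_{(1)}=0$, i.e. $f_{(1)}\in L^2(\Sigma^c)$; since $f_{(1)}$ is moreover of class $\sobs{s}$ with support in $\overline{\Sigma^c}$, it lies in $H^{s}(\Sigma^c)$. Thus $f=f_{(1)}+(K+\tfrac{1}{2}I)u$ is the desired decomposition.

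Next I would record the norm bounds. Because $K$ acts in spectral domain as $(Kf)^\wedge_{n,k}=\tfrac{1}{4n+2}\hat f_{n,k}$, it is bounded on every $\sobs{s}$ and on $\lts$, whence $K+\tfrac{1}{2}I$ is too; consequently $\|u\|_{\sobs{s}}\leq\|f\|_{\sobs{s}}+\|\ptm(f)\|_{\sobs{s}}$ and $\|f_{(1)}\|_{\sobs{s}}\leq C(\|f\|_{\sobs{s}}+\|\ptm(f)\|_{\sobs{s}})$. With $s=2+\kappa$, Theorem \ref{thm:l2sigmaapprox} (applied with $\Sigma$ replaced by $\Sigma^c$, as permitted by the standing hypotheses) produces $p_n\in\textnormal{span}\{\Psi_{\delta_i,x_i'}:x_i'\in\tilde X_i',\,i=1,\ldots,n\}\subset L^2(\Sigma^c)$ with $\|f_{(1)}-p_n\|_{\lts}\leq C\eta^{n}\|f_{(1)}\|_{\sobs{s}}$, while Theorem \ref{thm:densevsigma} produces $q_n\in\textnormal{span}\{1_\SS,S^{-1}G^{\rho_n}_{x,\bar x}:x\in\tilde X_n\}\subset V_\Sigma$ with $\|u-q_n\|_{\lts}\leq C h_{X_n}^{\kappa}\|u\|_{\sobs{s}}$.

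Setting $f_n=p_n+(K+\tfrac{1}{2}I)q_n$, the triangle inequality and the $\lts$-boundedness of $K+\tfrac{1}{2}I$ give
\[
\|f-f_n\|_{\lts}\leq\|f_{(1)}-p_n\|_{\lts}+\|(K+\tfrac{1}{2}I)(u-q_n)\|_{\lts}\leq C\eta^{n}\|f_{(1)}\|_{\sobs{s}}+C h_{X_n}^{\kappa}\|u\|_{\sobs{s}},
\]
and substituting the bounds for $\|f_{(1)}\|_{\sobs{s}}$ and $\|u\|_{\sobs{s}}$ yields \eqref{eqn:dsigmaest}. That $f_n$ belongs to the stated span follows once one notes that $K+\tfrac{1}{2}I$ preserves constants, so the constant part $(K+\tfrac{1}{2}I)(c\,1_\SS)=c\,1_\SS$ contributed by $q_n$ is discarded in the quotient $D_{+,\Sigma}/\langle1\rangle$; the same observation shows $f_n\in D_{+,\Sigma}$, as $p_n\in L^2(\Sigma^c)$ and $(K+\tfrac{1}{2}I)q_n\in(K+\tfrac{1}{2}I)V_\Sigma$. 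The estimate \eqref{eqn:dsigmaest2} for $g\in D_{-,\Sigma}$ is obtained verbatim, replacing $\ptm$ by $\mtp$: by \eqref{eqn:mtp} the element $v=-g-\mtp(g)$ (up to the harmless constant $\langle g,1\rangle_{\lts}\,1_\SS$) lies in $V_\Sigma$ and satisfies $g-(K-\tfrac{1}{2}I)v\in L^2(\Sigma^c)$, after which one approximates the $V_\Sigma$-part by $(K-\tfrac{1}{2}I)S^{-1}G^{\rho_n}_{x,\bar x}$.

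The only genuinely non-routine step is the decomposition of the first paragraph: everything hinges on knowing that $[P_\Sigma(K+\tfrac{1}{2}I)]^{-1}$ maps into $V_\Sigma$ rather than merely into $\lts$, for this is what makes $u=f+\ptm(f)$ an admissible argument for Theorem \ref{thm:densevsigma}. This is exactly the content built into $\ptm$ by Theorem \ref{thm:tauptm}, so I would invoke it directly; the remainder reduces to triangle inequalities and the elementary boundedness of $K$.
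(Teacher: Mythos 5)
Your proposal is correct and follows essentially the same route as the paper's proof: both identify the $V_\Sigma$-component of the decomposition as $k=f+\ptm(f)$ (you derive this directly from the formula \eqref{eqn:ptm}, the paper derives it by applying $\ptm$ to the decomposition guaranteed by Theorem \ref{thm:tauptm}), bound both summands by $\|f\|_{\sobs{s}}+\|\ptm(f)\|_{\sobs{s}}$ via the boundedness of $K\pm\tfrac{1}{2}I$, and then invoke Theorems \ref{thm:l2sigmaapprox} and \ref{thm:densevsigma} on the respective pieces. Your additional remark on how the constant $1_\SS$ contributed by the span in Theorem \ref{thm:densevsigma} is absorbed in the quotient $D_{+,\Sigma}/\langle1\rangle$ is a detail the paper leaves implicit.
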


\begin{proof}
According to Theorem \ref{thm:tauptm} we can write $f=h+\left(K+\frac{1}{2}I\right)k$ with $h\in L^2(\Sigma^c)$ and $k\in V_\Sigma$. Applying $\tau_{ptm}$ yields $\tau_{ptm}(f)=-h+k-(K+\frac{1}{2}I)k=k-f$. Thus, we obtain
\begin{align}
k=f+\ptm (f) && h=f-\left(K+\frac{1}{2}I\right)k=-\left(K-\frac{1}{2}I\right)f-\left(K+\frac{1}{2}I\right)\ptm (f)
\end{align}
Since $K\pm\frac{1}{2}I:H^s(\SS)\to H^s(\SS)$ is bounded, we have $\|k\|_{\sobs{s}}\leq C \left(\|f\|_{\sobs{s}}+\|\ptm f\|_{\sobs{s}}\right)$ and $\|h\|_{\sobs{s}}\leq C\left(\|f\|_{\sobs{s}}+\|\ptm f\|_{\sobs{s}}\right)$, for some constant $C>0$. Now, let $h_n\in \textnormal{span}\bigcup_{i=1}^n\{\Psi_{\delta_i,x'}:x'\in\tilde{X}'_i\}$ be an approximation of $h$ according to Theorem \ref{thm:l2sigmaapprox} and $k_n\in\textnormal{span}\{S^{-1}G^{\rho_n}_{x,\bar{x}}:x\in \tilde{X}_n\}$ an approximation to $k$ according to Theorem \ref{thm:densevsigma}, then $f_n=h_n+(K+\frac{1}{2}I)k_n$ is in $D_{+,\Sigma}$ and satisfies the estimate \eqref{eqn:dsigmaest}. The estimate  \eqref{eqn:dsigmaest2} follows along the same lines.
\end{proof}

\begin{rem}\label{rem:ptmrbf}
	It can easily be seen from \eqref{eqn:ptm} that 
	\begin{align}
		\tau_{ptm}\left((K+\tfrac{1}{2}I)S^{-1}G^{\rho_n}_{x,\bar{x}}\right)=-(K-\tfrac{1}{2}I)S^{-1}G^{\rho_n}_{x,\bar{x}},\quad \tau_{ptm}\left(\Psi_{\delta_n,x'}\right)=-\Psi_{\delta_n,x'},\nonumber
	\end{align}
	under the assumptions of Theorem \ref{thm:densedsigma+}. Therefore, for any $f$ in the finite-dimensional subspace $\textnormal{span}\{(K+\frac{1}{2}I)S^{-1}G^{\rho_n}_{x,\bar{x}},\,\Psi_{\delta_i,x_i'}:\,x\in \tilde{X}_n,\,x_i'\in\tilde{X}'_i,\,i=1,\ldots,n\}\subset D_{+,\Sigma}$, one directly obtains the corresponding $\ptm(f)$ in $D_{-,\Sigma}$ without much additional computational effort. Yet, one should keep in mind the unboundedness of the operator $\ptm$ when mapping between the infinite-dimensional spaces $D_{+,\Sigma}$ and $D_{-,\Sigma}$.

\end{rem}

\subsection{Norm minimization and vectorial approximation}\label{sec:minnormsbf}

In Section \ref{sec:minnorm} we have briefly discussed norm minimizing vector fields with localization constraints. A corresponding statement with respect to the spherical basis functions considered above is presented next. For a given function $f$ in $D_{+,\Sigma}$ that is expanded in terms of those spherical basis functions, the norm minimizing vector field $\tilde{\f}$ that is supported in $\Sigma^c$ and that satisfies $\tilde{f}_+=f$ can be expressed fairly explicitly.

\begin{defi}\label{def:greenfunc}
	Let $\rho\in(0,2)$ and $x,\bar{x}\in\SS$ be fixed. Then we denote by $N^{\rho}_{x,\bar{x}}:\Sigma^c\to\R$ the solution in $L^2(\Sigma^c)/\langle1\rangle$ to the following Neumann boundary value problem: $\Delta_\SS N^{\rho}_{x,\bar{x}}=0$ in $\Sigma^c$ and $\boldsymbol{\nu}\cdot\nabla_\SS N^{\rho}_{x,\bar{x}}=\boldsymbol{\nu}\cdot\nabla_\SS G^{\rho}_{x,\bar{x}}$ on $\partial\Sigma$. 
\end{defi}

\begin{rem}
If $x,\bar{x}\in \Sigma$, then clearly $N^{\rho}_{x,\bar{x}}=G^{\rho}_{x,\bar{x}}$. Otherwise, if $x,\bar{x}\in \Sigma^c$, which is the relevant case for our setup, an explicit expression for $N^{\rho}_{x,\bar{x}}$ is more tedious to obtain. However, for the special case of $\Sigma^c$ being a spherical cap, which is sufficient for many considerations, an explicit expression of $N^{\rho}_{x,\bar{x}}$ is known: namely, $N^{\rho}_{x,\bar{x}}(y)=\Phi^{(N)}(x,y)-\Phi^{(N)}(\bar{x},y)$, with $\Phi^{(N)}$ as derived in \cite[Sec. 6.4.5]{freedengerhards12} for the construction of a Neumann Green function on spherical caps.
\end{rem}

\begin{prop}\label{prop:minnormsbf}
	Let the general setup be as in Theorem \ref{thm:densedsigma+}. Furthermore, let $f$ be in $\textnormal{span}\big\{(K+\frac{1}{2}I)S^{-1}G^{\rho_n}_{x,\bar{x}},\,\Psi_{\delta_i,x_i'}:\,x\in \tilde{X}_n,\,x_i'\in\tilde{X}'_i,\,i=1,\ldots,n\big\}\subset D_{+,\Sigma}$, i.e.,
	\begin{align}
		f=\sum_{\ell=1}^M c_\ell \left(K+\frac{1}{2}I\right)S^{-1}G^{\rho_n}_{x_\ell,\bar{x}}+\sum_{i=1}^n\sum_{k=1}^{K_i} c_{i,k} \Psi_{\delta_i,x_{i,k}'},
	\end{align} 
	with $c_\ell,c_{i,k}\in\R$ and $M,K_1,\ldots,K_n\in\mathbb{N}$. For $\tilde{\f}=B_+f+B_-\tilde{f}_-+\tilde{\f}_{df}$ with
	\begin{align}\label{eqn:repsbftildeg}
		\tilde{f}_-=-\sum_{\ell=1}^M c_\ell \left(K-\frac{1}{2}I\right)S^{-1}G^{\rho_n}_{x_\ell,\bar{x}}-\sum_{i=1}^n\sum_{k=1}^{K_i} c_{i,k} \Psi_{\delta_i,x_{j,k}}
	\end{align} 
	and
	\begin{align}\label{eqn:fdf2}
		\tilde{\f}_{df}=\left\{\begin{array}{ll}-\sum_{\ell=1}^M c_\ell \nabla_\SS G^{\rho_n}_{x_\ell,\bar{x}},&\textnormal{ on }\Sigma, \\-\sum_{\ell=1}^M c_\ell \nabla_\SS N^{\rho_n}_{x_\ell,\bar{x}},&\textnormal{ on }\Sigma^c,\end{array}\right.
	\end{align}
	the following holds true: $\tilde{\f}$ is in $L^2(\Sigma^c,\R^3)$, $\tilde{\f}_{df}$ is in $H_{df}(\SS)$, and
	\begin{align}\label{eqn:minnorm2}
		\scalemath{0.95}{\|\tilde{\f}\|_{L^2(\SS,\R^3)}=\min\left\{\|\f\|_{L^2(\SS,\R^3)}:\f=B_+f+\g+\mathbf{d} \in L^2(\Sigma^c,\R^3),\,\g\in H_-(\SS),\,\mathbf{d}\in H_{df}(\SS)\right\}.}
	\end{align}
\end{prop}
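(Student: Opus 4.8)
The plan is to show that the explicitly constructed $\tilde{\f}$ coincides with the abstract norm minimizer from Proposition \ref{prop:minnorm}; once this identification is made, \eqref{eqn:minnorm2} is immediate. The first step is to check that $\tilde{f}_-$ as given in \eqref{eqn:repsbftildeg} equals $\ptm(f)$. By Remark \ref{rem:ptmrbf} we have $\ptm((K+\tfrac{1}{2}I)S^{-1}G^{\rho_n}_{x,\bar{x}}) = -(K-\tfrac{1}{2}I)S^{-1}G^{\rho_n}_{x,\bar{x}}$ and $\ptm(\Psi_{\delta_i,x'}) = -\Psi_{\delta_i,x'}$, so applying the linear map $\ptm$ term by term to the expansion of $f$ reproduces exactly \eqref{eqn:repsbftildeg}. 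Hence $B_-\tilde{f}_- = B_-\ptm(f)$, and it remains to identify the explicit field \eqref{eqn:fdf2} with the divergence-free field \eqref{eqn:fdf} from Proposition \ref{prop:minnorm}.

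The crux is an explicit evaluation of $B_+ f + B_-\ptm(f)$. Writing $B_+ = \n(K-\tfrac{1}{2}I) + \grads S$ and $B_- = \n(K+\tfrac{1}{2}I) + \grads S$, I would treat the two families of basis functions separately. For a single Green-function term $u_\ell = S^{-1}G^{\rho_n}_{x_\ell,\bar{x}}$, the normal contributions involve $\n[(K-\tfrac{1}{2}I)(K+\tfrac{1}{2}I) - (K+\tfrac{1}{2}I)(K-\tfrac{1}{2}I)]u_\ell$, which vanishes since $K$ commutes with itself, while the tangential contributions telescope through $(K+\tfrac{1}{2}I)-(K-\tfrac{1}{2}I) = I$ to give $\grads S u_\ell = \grads G^{\rho_n}_{x_\ell,\bar{x}}$. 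For a scaled spherical basis function one finds $B_+\Psi_{\delta_i,x'} - B_-\Psi_{\delta_i,x'} = -\n\,\Psi_{\delta_i,x'}$, a purely normal field supported in $\Sigma^c$. Summing, one obtains the identity
\begin{align}
	B_+ f + B_-\ptm(f) = \sum_{\ell=1}^M c_\ell\,\grads G^{\rho_n}_{x_\ell,\bar{x}} - \sum_{i=1}^n\sum_{k=1}^{K_i} c_{i,k}\,\n\,\Psi_{\delta_i,x'_{i,k}}. \nonumber
\end{align}

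With this in hand, matching $\tilde{\f}_{df}$ becomes routine. On $\Sigma$ the normal $\Psi$-contribution vanishes because each $\Psi_{\delta_i,x'_{i,k}}$ is supported in $\Sigma^c$, so $-(B_+ f + B_-\ptm(f)) = -\sum_\ell c_\ell\grads G^{\rho_n}_{x_\ell,\bar{x}}$, which is precisely \eqref{eqn:fdf2} restricted to $\Sigma$. On $\Sigma^c$ it suffices to verify that $h = \sum_\ell c_\ell N^{\rho_n}_{x_\ell,\bar{x}}$ solves the Neumann problem required in Proposition \ref{prop:minnorm}: harmonicity $\Delta_\SS h = 0$ on $\Sigma^c$ holds by Definition \ref{def:greenfunc}, and on $\partial\Sigma$ the boundary data satisfies $\boldsymbol{\nu}\cdot\grads h = \sum_\ell c_\ell\,\boldsymbol{\nu}\cdot\grads G^{\rho_n}_{x_\ell,\bar{x}} = \boldsymbol{\nu}\cdot(B_+ f + B_-\ptm(f))$, where the first equality uses Definition \ref{def:greenfunc} applied to each $N^{\rho_n}_{x_\ell,\bar{x}}$ and the second uses the displayed identity together with $\boldsymbol{\nu}\cdot\n = 0$ (as $\boldsymbol{\nu}$ is tangential to $\SS$ while $\n$ is normal) to discard the normal $\Psi$-contribution. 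Thus $-\grads h = -\sum_\ell c_\ell\grads N^{\rho_n}_{x_\ell,\bar{x}}$ matches \eqref{eqn:fdf2} on $\Sigma^c$.

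Having shown that $\tilde{f}_- = \ptm(f)$ and that \eqref{eqn:fdf2} agrees with \eqref{eqn:fdf}, the field $\tilde{\f}$ defined here is identical to the minimizer in Proposition \ref{prop:minnorm}; in particular $\tilde{\f}\in L^2(\Sigma^c,\R^3)$ and $\tilde{\f}_{df}\in H_{df}(\SS)$ (the latter guaranteed by \cite[Lemma A.1]{gerhuakeg23}), and \eqref{eqn:minnorm2} follows directly. The main obstacle is the explicit evaluation of $B_+ f + B_-\ptm(f)$ together with the correct identification of its boundary trace on $\partial\Sigma$: the cancellation of the normal layer-potential terms, the telescoping of the tangential ones, and the observation $\boldsymbol{\nu}\cdot\n = 0$ are what make the abstract Neumann datum of Proposition \ref{prop:minnorm} collapse onto the explicit function $h$.
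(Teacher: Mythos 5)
Your proof is correct and follows essentially the same route as the paper: identify $\tilde{f}_-$ with $\ptm(f)$ via Remark \ref{rem:ptmrbf}, compute $B_+f+B_-\ptm(f)$ explicitly (cancellation of the normal layer-potential products, telescoping of the tangential ones), read off the restriction to $\Sigma$ and the Neumann datum on $\partial\Sigma$ using $\boldsymbol{\nu}\cdot\boldsymbol{\eta}=0$, and invoke Proposition \ref{prop:minnorm}. Your sign $-\boldsymbol{\eta}\,\Psi_{\delta_i,x'_{i,k}}$ for the normal contribution is in fact what the computation yields (the paper's display \eqref{eqn:b++b-} shows a $+$ there, apparently a typo), but this is immaterial since that term vanishes on $\Sigma$ and is annihilated by $\boldsymbol{\nu}\cdot$.
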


\begin{proof}
	We already know that $\tilde{f}_-=\ptm(f)$ is necessary in order to enable $\tilde{\f}$ to be in $L^2(\Sigma^c,\R^3)$. The representation \eqref{eqn:repsbftildeg} then directly follows from Remark \ref{rem:ptmrbf}. Furthermore, we can compute
	\begin{align}
		\scalemath{0.95}{B_+f+B_-\tilde{f}_-=}&\scalemath{0.95}{\sum_{\ell=1}^M c_\ell \boldsymbol{\eta}\left(K-\frac{1}{2}I\right)\left(K+\frac{1}{2}I\right)S^{-1}G^{\rho_n}_{x_\ell,\bar{x}}+\sum_{i=1}^n\sum_{k=1}^{K_i} c_{i,k} \boldsymbol{\eta}\left(K-\frac{1}{2}I\right)\Psi_{\delta_i,x_{i,k}'}}\nonumber
		\\&\scalemath{0.95}{-\sum_{\ell=1}^M c_\ell \boldsymbol{\eta}\left(K+\frac{1}{2}I\right)\left(K-\frac{1}{2}I\right)S^{-1}G^{\rho_n}_{x_\ell,\bar{x}}-\sum_{i=1}^n\sum_{k=1}^{K_i} c_{i,k} \boldsymbol{\eta}\left(K+\frac{1}{2}I\right)\Psi_{\delta_i,x_{i,k}'}}\nonumber
		\\&+\scalemath{0.95}{\sum_{\ell=1}^M c_\ell \nabla_\SS\left(K+\frac{1}{2}I\right)G^{\rho_n}_{x_\ell,\bar{x}}+\sum_{i=1}^n\sum_{k=1}^{K_i} c_{i,k} \nabla_\SS S\Psi_{\delta_i,x_{i,k}'}}\label{eqn:b++b-}
		\\&\scalemath{0.95}{-\sum_{\ell=1}^M c_\ell \nabla_\SS\left(K-\frac{1}{2}I\right)G^{\rho_n}_{x_\ell,\bar{x}}-\sum_{i=1}^n\sum_{k=1}^{K_i} c_{i,k} \nabla_\SS S\Psi_{\delta_i,x_{i,k}'}}\nonumber
		\\\scalemath{0.95}{=}&\scalemath{0.95}{\sum_{\ell=1}^M c_\ell \nabla_\SS G^{\rho_n}_{x_\ell,\bar{x}}+\sum_{i=1}^n\sum_{k=1}^{K_i} c_{i,k} \boldsymbol{\eta}\,\Psi_{\delta_i,x_{i,k}'}.}\nonumber
	\end{align}
	Due to the properties of the point sets $\tilde{X}_i'$ in Theorem \ref{thm:densedsigma+} and the assumption $x_{i,k}'\in \tilde{X}_i'$, the functions $\Psi_{\delta_i,x_{i,k}'}$ vanish on $\Sigma$. Therefore, the expression above implies $B_+f+B_-\tilde{f}_-=\sum_{\ell=1}^M c_\ell \nabla_\SS G^{\rho_n}_{x_\ell,\bar{x}}$ on $\Sigma$. Furthermore, on the boundary $\partial\Sigma$, it holds that
	\begin{align}
		\boldsymbol{\nu}\cdot\left(B_+f+B_-\tilde{f}_-\right)=\sum_{\ell=1}^M c_\ell \boldsymbol{\nu}\cdot\nabla_\SS G^{\rho_n}_{x_\ell,\bar{x}},
	\end{align}
	since $\boldsymbol{\nu}$ is orthogonal to $\boldsymbol{\eta}$ by definition. The choice $h=\sum_{\ell=1}^M c_\ell N^{\rho_n}_{x_\ell,\bar{x}}$ then satisfies $\DeltaS h=0$ on $\Sigma^c$ and $\boldsymbol{\nu}\cdot \nabla_\SS h= \boldsymbol{\nu}\cdot\left(B_+f+B_-\tilde{f}_-\right)$ on $\partial\Sigma$. The assertion of Proposition \ref{prop:minnormsbf} is now a direct consequence of Proposition \ref{prop:minnorm}.
\end{proof}

Furthermore, the above computations also guide us to an error estimate for the joint approximation in $D_{+,\Sigma}$ and $D_{-,\Sigma}$. In fact, this yields a slightly different proof of Theorem \ref{thm:densedsigma+} and it provides a slightly more general statement, since it guarantees the existence of a sequence $f_n$ such that both $f_n$ and $\ptm(f_n)$ satisfy the desired error estimate.

\begin{thm}\label{thm:approxall}
	Let the general setup be as in Theorem \ref{thm:densedsigma+} and let $\f$ be in $L^2(\Sigma^c,\R^3)$. Then there exists a $\f_n$ in $\textnormal{span}\big\{\nabla_\SS G^{\rho_n}_{x,\bar{x}},\,\boldsymbol{\eta}\,\Psi_{\delta_i,x_i'}:\,x\in \tilde{X}_n,\,x_i'\in\tilde{X}'_i,\,i=1,\ldots,n\big\}$, i.e.,  $\f_n=\sum_{\ell=1}^M c_\ell \nabla_\SS G^{\rho_n}_{x_\ell,\bar{x}}+\sum_{i=1}^n\sum_{k=1}^{K_i} c_{i,k} \boldsymbol{\eta}\,\Psi_{\delta_i,x_{i,k}'}$ for some $c_\ell,c_{i,k}\in\R$ and $M,K_1,\ldots,K_n\in\mathbb{N}$, such that
	\begin{align}\label{eqn:vest1}
		\|(\f_++\f_-)-\f_n\|_{L^2(\SS,\R^3)}\leq  C \left(\eta^{n}+h_{X_n}^{\kappa}\right)\left(\|f_+\|_{\sobs{s}}+\|f_-\|_{\sobs{s}}\right),
	\end{align}
where $\f=\f_++\f_-+\f_{df}=B_+f_++B_-f_-+\f_{df}$ reflects the Hardy-Hodge decomposition of $\f$. The constants $C>0$ and  $\eta\in(0,1)$ depend on the parameters listed in Theorems \ref{thm:l2sigmaapprox} and \ref{thm:densevsigma}. 
Furthermore, the functions 
\begin{align}
	f_+^n&=\sum_{\ell=1}^M c_\ell \left(K+\frac{1}{2}I\right)S^{-1}G^{\rho_n}_{x_\ell,\bar{x}}+\sum_{i=1}^n\sum_{k=1}^{K_i} c_{i,k} \Psi_{\delta_i,x_{i,k}'},\label{eqn:vest2}
	\\f_-^n&=\ptm(f_+^n)=-\sum_{\ell=1}^M c_\ell \left(K-\frac{1}{2}I\right)S^{-1}G^{\rho_n}_{x_\ell,\bar{x}}-\sum_{i=1}^n\sum_{k=1}^{K_i} c_{i,k} \Psi_{\delta_i,x_{i,k}'},\label{eqn:vest3}
	\end{align}
with $c_\ell,c_{i,k}\in\R$ and $M,K_1,\ldots,K_n\in\mathbb{N}$ as above, lie in $D_{+,\Sigma}$ and $D_{-,\Sigma}$, respectively, and satisfy
	\begin{align}
		\|f_+-f^n_+\|_{L^2(\SS)}&\leq  {C} \left(\eta^{n}+h_{X_n}^{\kappa}\right)\left(\|f_+\|_{\sobs{s}}+\|f_-\|_{\sobs{s}}\right),\label{eqn:vest4}
		\\\|f_--f^n_-\|_{L^2(\SS)}&\leq  {C}  \left(\eta^{n}+h_{X_n}^{\kappa}\right)\left(\|f_+\|_{\sobs{s}}+\|f_-\|_{\sobs{s}}\right).\label{eqn:vest5}
	\end{align}
	% ith a constant ${C} >0$ dependent on the parameters listed in Theorems \ref{thm:l2sigmaapprox} and \ref{thm:densevsigma}, possibly different from the constant in \eqref{eqn:vest1}.
\end{thm}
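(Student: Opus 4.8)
The plan is to reduce the vectorial statement entirely to the two scalar approximation results, Theorem~\ref{thm:l2sigmaapprox} and Theorem~\ref{thm:densevsigma}, by exploiting that the vectorial basis functions are precisely the images of the scalar ones under the map $f\mapsto B_+f+B_-\ptm(f)$, as already recorded in the computation~\eqref{eqn:b++b-}. Since the assertion is trivial when $\|f_+\|_{\sobs{s}}$ or $\|f_-\|_{\sobs{s}}$ is infinite, I may assume $f_+,f_-\in\sobs{s}$. By Theorem~\ref{thm:tauptm}, the Hardy-Hodge decomposition $\f=B_+f_++B_-f_-+\f_{df}$ of a field $\f\in L^2(\Sigma^c,\R^3)$ forces $f_+\in D_{+,\Sigma}$, $f_-\in D_{-,\Sigma}$, and $f_-=\ptm(f_+)$.

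First I would reuse the decomposition from the proof of Theorem~\ref{thm:densedsigma+}: write $f_+=h+(K+\tfrac12 I)k$ with $h\in L^2(\Sigma^c)$ and $k\in V_\Sigma$. The identity $\ptm(f_+)=k-f_+$ then gives the symmetric pair $f_+=h+(K+\tfrac12 I)k$ and $f_-=-h-(K-\tfrac12 I)k$, together with $k=f_++f_-$ and $h=-(K-\tfrac12 I)f_+-(K+\tfrac12 I)f_-$. The boundedness of $K\pm\tfrac12 I$ on $\sobs{s}$ yields $\|h\|_{\sobs{s}},\|k\|_{\sobs{s}}\le C(\|f_+\|_{\sobs{s}}+\|f_-\|_{\sobs{s}})$. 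I then take the very same scalar approximants used in Theorem~\ref{thm:densedsigma+}: an $h_n\in\textnormal{span}\{\Psi_{\delta_i,x_i'}\}$ with $\|h-h_n\|_{L^2(\SS)}\le C\eta^n\|h\|_{\sobs{s}}$ from Theorem~\ref{thm:l2sigmaapprox}, and a $k_n\in\textnormal{span}\{S^{-1}G^{\rho_n}_{x,\bar x}\}$ with $\|k-k_n\|_{L^2(\SS)}\le Ch_{X_n}^\kappa\|k\|_{\sobs{s}}$ from Theorem~\ref{thm:densevsigma} (using $s=2+\kappa$).

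The crucial point, and the \textbf{main obstacle} the argument must overcome, is the unboundedness of $\ptm$: one cannot control $\|f_--f_-^n\|$ merely by applying $\ptm$ to a good approximation of $f_+$. The resolution is to reconstruct \emph{both} scalar components from the \emph{same} pair $h_n,k_n$, setting $f_+^n=h_n+(K+\tfrac12 I)k_n$ and $f_-^n=-h_n-(K-\tfrac12 I)k_n$; by Remark~\ref{rem:ptmrbf} these coincide with~\eqref{eqn:vest2}--\eqref{eqn:vest3} and satisfy $f_-^n=\ptm(f_+^n)$, while they lie in $D_{+,\Sigma}$, $D_{-,\Sigma}$ by the characterizations~\eqref{eqn:dsp}--\eqref{eqn:dsm}. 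Subtracting gives $f_+-f_+^n=(h-h_n)+(K+\tfrac12 I)(k-k_n)$ and $f_--f_-^n=-(h-h_n)-(K-\tfrac12 I)(k-k_n)$, so the boundedness of $K\pm\tfrac12 I$ on $L^2(\SS)$ yields $\|f_\pm-f_\pm^n\|_{L^2(\SS)}\le\|h-h_n\|_{L^2(\SS)}+C\|k-k_n\|_{L^2(\SS)}$ for both signs at once. Feeding in the two scalar rates together with the bounds on $\|h\|_{\sobs{s}},\|k\|_{\sobs{s}}$ establishes~\eqref{eqn:vest4} and~\eqref{eqn:vest5} simultaneously.

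Finally, for the vectorial bound~\eqref{eqn:vest1} I would invoke~\eqref{eqn:b++b-} to identify $\f_n=B_+f_+^n+B_-f_-^n$, so that $(\f_++\f_-)-\f_n=B_+(f_+-f_+^n)+B_-(f_--f_-^n)$. The orthogonality of $H_+(\SS)$ and $H_-(\SS)$ together with the boundedness of $B_+$ and $B_-$ gives $\|(\f_++\f_-)-\f_n\|_{L^2(\SS,\R^3)}^2\le C(\|f_+-f_+^n\|_{L^2(\SS)}^2+\|f_--f_-^n\|_{L^2(\SS)}^2)$, whence~\eqref{eqn:vest1} follows from~\eqref{eqn:vest4}--\eqref{eqn:vest5}. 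The only genuine novelty beyond Theorem~\ref{thm:densedsigma+} is the observation in the third paragraph that the \emph{shared} approximants $h_n,k_n$ make the control of the $\ptm$-image $f_-^n$ automatic; the remaining steps are bookkeeping with the bounded operators $B_\pm$ and $K\pm\tfrac12 I$.
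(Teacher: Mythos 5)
Your proof is correct and follows essentially the same route as the paper: both arguments rest on approximating a single shared pair of scalar components (your $h,k$ are, up to sign, the paper's Helmholtz scalars $f_{\boldsymbol\eta},f_{cf}$ of $\f_++\f_-$) so that the unboundedness of $\ptm$ never enters. The only difference is the order of deduction — the paper proves the vectorial estimate \eqref{eqn:vest1} first via the Helmholtz decomposition and then reads off \eqref{eqn:vest4}--\eqref{eqn:vest5} using \eqref{eqn:b++b-} and orthogonality, whereas you establish the scalar estimates first and then assemble \eqref{eqn:vest1}; this is a reordering, not a different argument.
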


\begin{proof}
	It is well-known known that any $\f$ in $L^2(\SS,\R^3)$ can be decomposed into the Helmholtz decomposition $\f=\boldsymbol{\eta} f_{\boldsymbol{\eta}}+\nabla_\SS S f_{cf}+\f_{df}$, with $f_{\boldsymbol{\eta}},f_{cf}$ in $L^2(\SS)$ and $\f_{df}$ in $H_{df}(\SS)$ (see, e.g., \cite{freedengerhards12,freedenschreiner09}). Since the statement of the theorem assumes that $\f$ is in $L^2(\Sigma^c,\R^3)$, we get that $f_{\boldsymbol{\eta}}$ must be in $L^2(\Sigma^c)$ and $f_{cf}$ must be in $V_\Sigma$. Theorems \ref{thm:l2sigmaapprox} and \ref{thm:densevsigma} then guarantee the existence of an $\f_n^1$ in $\textnormal{span}\left\{\boldsymbol{\eta}\,\Psi_{\delta_i,x_i'}:\,x_i'\in\tilde{X}'_i,\,i=1,\ldots,n\right\}$ and an $\f_n^2$ in $\textnormal{span}\left\{\nabla_\SS G^{\rho_n}_{x,\bar{x}}:\,x\in \tilde{X}_n\right\}$ such that 
	\begin{align}
		\|\boldsymbol{\eta} f_{\boldsymbol{\eta}}-\f_n^1\|_{L^2(\SS,\R^3)}&\leq  C \eta^{n}\|f_{\boldsymbol{\eta}}\|_{\sobs{s}},
		\\\|\nabla_\SS S f_{cf}-\f_n^2\|_{L^2(\SS,\R^3)}&\leq  C h_{X_n}^{\kappa}\|f_{cf}\|_{\sobs{s}}.
	\end{align}
	Combining the two estimates above and observing that $\f_++\f_-=\boldsymbol{\eta} f_{\boldsymbol{\eta}}+\nabla_\SS S f_{cf}$ as well as that $B_+$ and $B_-$ are bounded and invertible, we directly obtain \eqref{eqn:vest1}, with a possibly modified constant $C>0$. The desired estimates \eqref{eqn:vest2} and \eqref{eqn:vest3} follow from \eqref{eqn:vest1} when reading the equations in \eqref{eqn:b++b-} backwards and when observing the mutual orthogonality of the Hardy spaces $H_+(\SS)$ and $H_-(\SS)$. 
\end{proof}

\section{Numerical example}\label{sec:numerics}
\begin{figure}
	\begin{center}
	\includegraphics[scale=0.3]{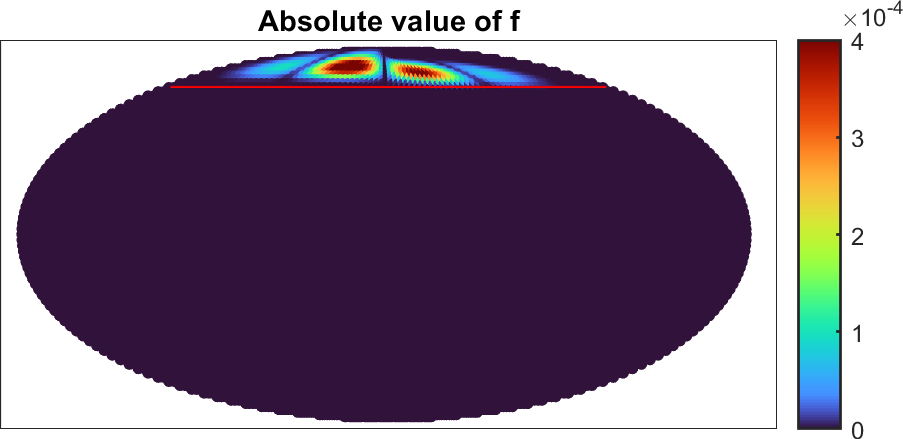}	\includegraphics[scale=0.3]{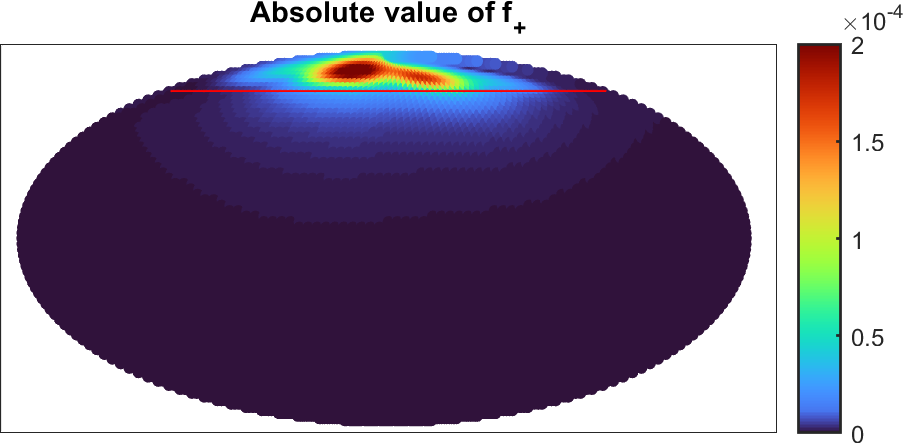}
	\end{center}
	\caption{Absolute values of the underlying vector field $\f$ from \eqref{eqn:f} (\textit{left}) and its $H_+(\SS)$-contribution $\f_+$ (\textit{right}). The red line indicates the boundary of the spherical cap $\mathcal{C}_{0.1}(\mathbf{e}_3)$ that denotes the support of $\f$.}\label{fig:f}
\end{figure}
We provide a brief numerical example that illustrates the suitability of the constructed spherical basis functions for approximation in $D_{+,\Sigma}$ and that reproduces the derived convergence rates. For that purpose, we choose the test vector field $\f(x)=Q(x)(3(x\cdot\d)x-\d)$ with $\d=(0,0.6,0.8)^T$ and
\begin{align}
 Q(x)=Q(t,\varphi)=\left\{\begin{array}{ll}(t-a)^3(t-1)^2(\varphi-2\pi)^3 \sin(2\varphi)\varphi^3,\quad&\textnormal{if }t>a,\\0,&\textnormal{else},\end{array}\right.\label{eqn:f}
\end{align}
noticing that $x=(\sqrt{1-t^2}\cos(\varphi),\sqrt{1-t^2}\sin(\varphi),t)^T\in\SS$ for $t\in[-1,1]$ and $\varphi\in[0,2\pi)$. Clearly, $\f$ is supported in the spherical cap $\mathcal{C}_{1-a}(\mathbf{e}_3)$ with polar radius $1-a\in(0,2)$ and center $\mathbf{e}_3=(0,0,1)^T$. In the upcoming example we choose $a=0.9$, i.e., $\f$ is supported in  $\mathcal{C}_{0.1}(\mathbf{e}_3)$ (the function $\f$ as well as its $H_+(\SS)$-contribution $\f_+=B_+f$ are illustrated in Figure \ref{fig:f}). As a consequence, the corresponding $f$ lies in $D_{+,\Sigma}$, for $\Sigma^c=\mathcal{C}_{0.1}(\mathbf{e}_3)$ but also for $\Sigma^c=\mathcal{C}_{0.2}(\mathbf{e}_3),\,\mathcal{C}_{1}(\mathbf{e}_3)$. In the following we want to approximate this $f$ by solving
\begin{align}
f_n=\textnormal{argmin}\left\{\|f-g\|_{L^2(\SS)}^2+\lambda^2 \|g\|_{H^s(\SS)}^2:g\in \mathcal{D}_n\right\},\label{eqn:optprob}
\end{align} 
with $s=9/4$, a not yet specified regularization parameter $\lambda$, and $\mathcal{D}_n\subset D_{+,\Sigma}$ the finite dimensional subspace
\begin{align}
	\mathcal{D}_n=\textnormal{span}\left\{\left(K+\frac{1}{2}I\right)S^{-1}G^{\rho_n}_{x,\bar{x}},\,\Psi_{\delta_i,x_i'}:\,x\in \tilde{X}_n,\,x_i'\in\tilde{X}'_i,\,i=1,\ldots,n\right\}.\label{eqn:choicedn}
\end{align}
The pointsets $\tilde{X}_n$, $\tilde{X}'_i$ and the parameters $\rho_n$, $\delta_i$ are chosen to satisfy the conditions from Theorem \ref{thm:densedsigma+} (cf. Figure \ref{fig:points} for more details on $\mathcal{D}_n$ and the used pointsets). We run tests for levels $n=1,2, \ldots,6$, and for the three choices of $\Sigma$ indicated above, i.e., for $\Sigma^c=\mathcal{C}_{0.1}(\mathbf{e}_3),\mathcal{C}_{0.2}(\mathbf{e}_3),\,\mathcal{C}_{1}(\mathbf{e}_3)$, while we keep the underlying $f$ the same.  Note that  $\tilde{X}_n$, $\tilde{X}'_i$ depend on $\Sigma$ according to their definition in Theorem \ref{thm:densedsigma+}. Finally, for the evaluation of \eqref{eqn:optprob}, we assume to know $f$ only via its spherical harmonic coefficients up to a maximal degree $100$ (an assumption that is reasonable for typical geomagnetic applications, and which also reflects some noise in the data by neglecting higher degree information).

Figure \ref{fig:errors} indicates the relative errors $\|f_n-f\|_{L^2(\SS)}/(\|f\|_{H^s(\SS)}+\|\tau_{ptm}(f)\|_{H^s(\SS)})$ obtained for the solutions $f_n$ of \eqref{eqn:optprob} within the setup described above (only the outcome for the best choice of $\lambda$, among a tested range on regularization parameters, is plotted). One can see that the evolution of the error with respect to $n$ generally follows the predicted rate from Theorem \ref{thm:densedsigma+}. Additionally one can see that the error is smaller for larger $\Sigma^c$, which is probably due to the increased number of available basis functions contained in $\mathcal{D}_n$. The observation in the left plot of Figure \ref{fig:errors} that the error reaches a plateau at $n=5$ is likely due to the restriction of information on $f$ to spherical harmonic degree 100. If we increase the information on $f$ up to degree 200, the error decreases further as $n$ grows, although it still does not reach the predicted error bound (cf. right plot in Figure \ref{fig:errors}). 
\begin{figure}
	\begin{center}
		\includegraphics[scale=0.22]{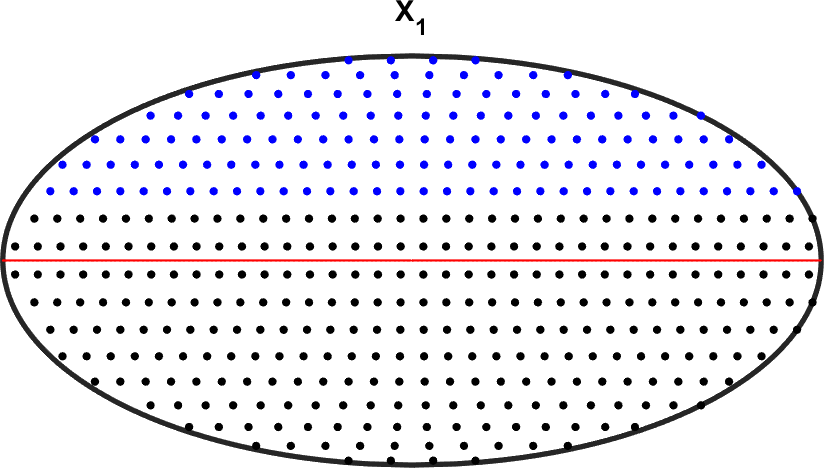}\quad\includegraphics[scale=0.22]{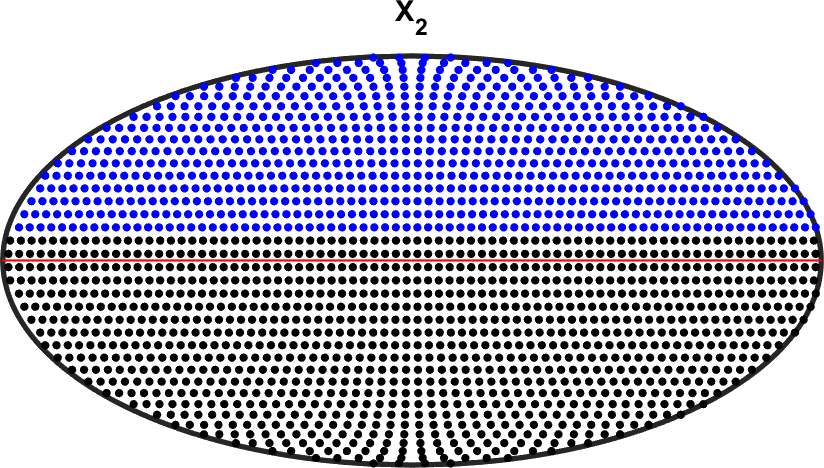}\quad\includegraphics[scale=0.22]{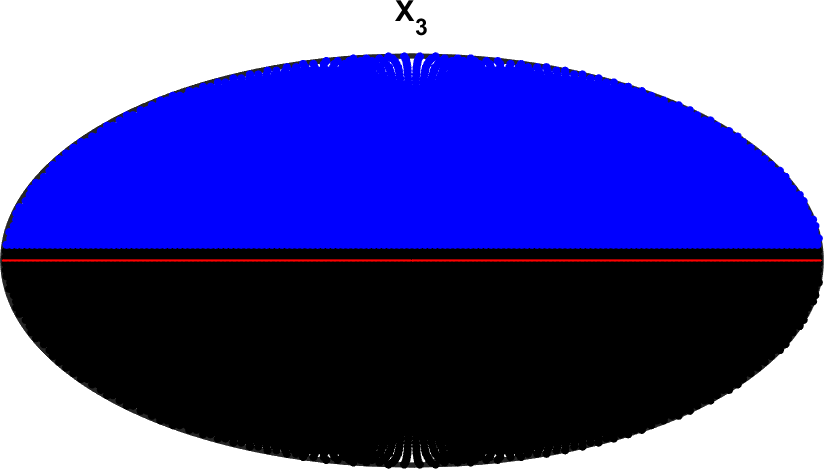}
	\end{center}
	\begin{flushleft}
		\footnotesize
		\begin{tabular}{c | c c c c| c c c c| c c c c|} 
			&&$n=1$&& &&$n=2$&& &&$n=3$&&
			\\&$h_n$&$\#\mathcal{D}_n$&$\delta_n$&$\rho_n$&$h_n$&$\#\mathcal{D}_n$&$\delta_n$&$\rho_n$&$h_n$&$\#\mathcal{D}_n$&$\delta_n$&$\rho_n$
			\\\hline
			$\Sigma_1$& 0.174 & 280 & 0.385 & 0.105 & 0.083 & 1,692 & 0.187 & 0.025 & 0.039 & 8,466 & 0.009 & 0.006
			\\$\Sigma_2$& 0.174 & 8 & 0.385 & 0.105 & 0.083 & 232 & 0.187 & 0.025 & 0.039 & 1,334 & 0.009 & 0.006
			\\$\Sigma_3$& 0.174 & 0 & 0.385 & 0.105 & 0.083 & 60 & 0.187 & 0.025 & 0.039 & 544 & 0.009 & 0.006
			\\\hline\hline
			&&$n=4$&& &&$n=5$&& &&$n=6$&&
			\\&$h_n$&$\#\mathcal{D}_n$&$\delta_n$&$\rho_n$&$h_n$&$\#\mathcal{D}_n$&$\delta_n$&$\rho_n$&$h_n$&$\#\mathcal{D}_n$&$\delta_n$&$\rho_n$
			\\\hline
			$\Sigma_1$& 0.023 & 27,903 & 0.052 & 0.002 & 0.016 & 64,737 & 0.036 & 0.0009 & 0.011 & 151,353 & 0.025 & 0.0004
			\\$\Sigma_2$& 0.023 & 4,880 & 0.052 & 0.002 & 0.016 & 11,645 &  0.036 & 0.0009 & 0.011 & 28,464 & 0.025 & 0.0004
			\\$\Sigma_3$& 0.023 & 2,315 & 0.052 & 0.002 & 0.016 & 5,517 &  0.036 & 0.0009 & 0.011 & 13,359 &0.025 & 0.0004
			\\\hline \hline
		\end{tabular}
		\normalsize
	\end{flushleft}
	\caption{\textit{Top}: Black dots illustrate global point sets for different mesh sizes. The blue dots indicate the actually used points contained in $\tilde{X}'_1$, $\tilde{X}'_2$, $\tilde{X}'_3$ (\textit{left} to \textit{right}) for the choice $\Sigma^c=\mathcal{C}_{1}(\mathbf{e}_3)$. The red line indicates the boundary of $\Sigma^c$. \textit{Bottom}: table containing information on the mesh widths $h_n$, number of basis functions in $\mathcal{D}_n$, and localization parameters $\delta_n$, $\rho_n$ for the three different choices $\Sigma^c=\mathcal{C}_{0.1}(\mathbf{e}_3),\mathcal{C}_{0.2}(\mathbf{e}_3),\,\mathcal{C}_{1}(\mathbf{e}_3)$, which we abbreviated by $\Sigma_1$, $\Sigma_2$, $\Sigma_3$ in the table.}\label{fig:points}
\end{figure}
\begin{figure}
	\begin{center}
		\includegraphics[scale=0.25]{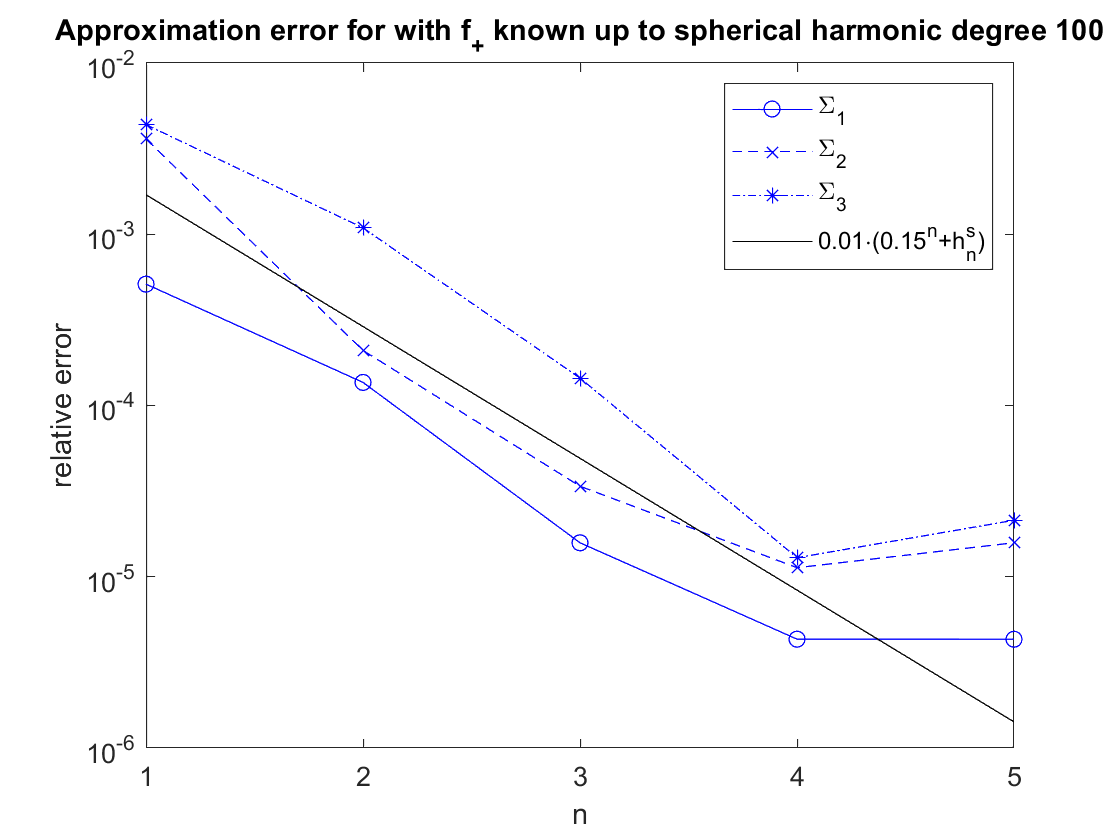}\quad\includegraphics[scale=0.25]{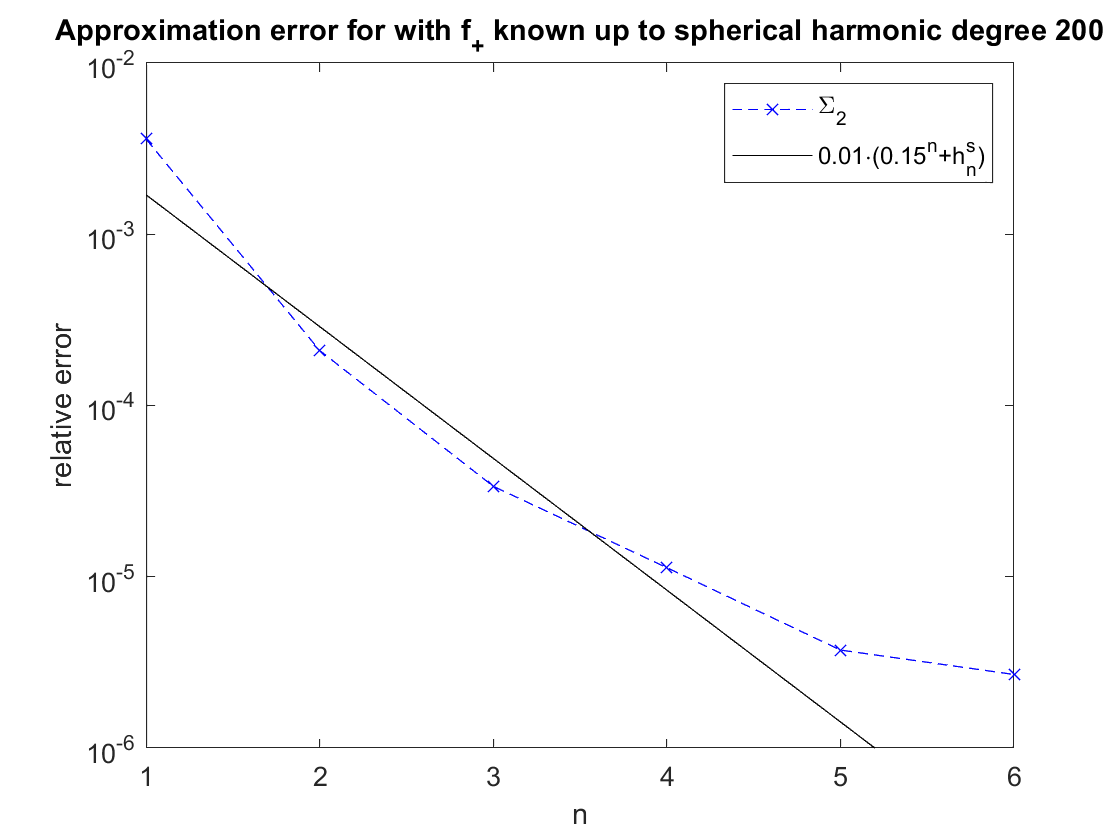}
	\end{center}
	\caption{Relative approximation error $\|f_n-f\|_{L^2(\SS)}/(\|f\|_{H^s(\SS)}+\|\tau_{ptm}(f)\|_{H^s(\SS)})$ for the $f_n$ obtained by solving \eqref{eqn:optprob} for different scales $n$ and for different choices of $\Sigma^c$, with $f$ as in \eqref{eqn:f}. The choices $\Sigma^c=\mathcal{C}_{0.1}(\mathbf{e}_3),\mathcal{C}_{0.2}(\mathbf{e}_3),\,\mathcal{C}_{1}(\mathbf{e}_3)$ are abbreviated by $\Sigma_1$, $\Sigma_2$, $\Sigma_3$ in the plots above. The black lines indicate the predicted error estimate $C(\eta^n+h_n^s)$, with the choice $C=0.01$ and $\eta=0.15$. \textit{Left}: results obtained by using information on $f$ up to spherical harmonic degree 100, \textit{right}: results when using information up to degree 200.}\label{fig:errors}
\end{figure}

\section{Conclusion}
We have investigated a set of spherical basis functions suitable for the approximation in subspaces of the Hardy space $H_+(\SS)$ that are obtained by orthogonal projection of locally supported vector fields. The new aspect has been that the considered spherical basis functions lead to vectorial functions that are themselves members of these subspaces. In this sense they are related to certain vector spherical Slepian functions but may have some advantages since they do not require simultaneous computations in $H_+(\SS)$, $H_-(\SS)$, $H_{df}(\SS)$ and only their centers need to be adapted for new domains of support. However, they do not feature an additional optimization in spectral domain that Slepian functions typically have. The obtained theoretical approximation results have been illustrated by a numerical example in Section \ref{sec:numerics}. Remark \ref{rem:ptmrbf} shows that the derived spherical basis functions further allow a simple mapping into $H_-(\SS)$, given the underlying localization constraints, which can be of interest for various inverse magnetization problems. The latter, however, requires further studies on possible regularization strategies that go beyond the scope of the paper at hand (a first na\"ive estimate is provided in the Appendix \ref{app:bep}).

\bibliography{biblio}
\bibliographystyle{plain}

\appendix

\section{Appendix}

\subsection{Proof of Proposition \ref{thm:smoothnessofconvolution}}\label{sec:prfprop21}

\begin{proof}
	If $f$ is a $\mathbf{p}-$zonal function, then 
	\begin{align}
		f(x)=\sum_{n=0}^{\infty}\hat{f}_n\sum_{k=-n}^{n}\ylk(p)\ylk(x),
	\end{align}
	for coefficients $\hat{f}_n\in\R$. From the addition theorem for spherical harmonics, we get
	\begin{align}
		\|f\|_{\sobs{s}}^2& =\sum_{n=0}^{\infty}(n+\tfrac{1}{2})^{2s}\hat{f}_n^2\sum_{k=-n}^{n}\ynk(p)^2\nonumber\\
		&=\sum_{n=0}^{\infty}(n+\tfrac{1}{2})^{2s}\hat{f}_n^2 \frac{2n+1}{4\pi}.
	\end{align}	
	Thus, it must hold
	\begin{align}
		|\hat{f}_n|\leq \sqrt{2\pi} \|f\|_{\sobs{s}} (n+\tfrac{1}{2})^{-s-\nicefrac{1}{2}}.\label{eqn:anest}
	\end{align}	
	Let $g\in \sobs{k}$ with spherical harmonic coefficient $\hat{g}_{n,k}$. The convolution defined in \eqref{eqn:conv} then has the spectral expression
	\begin{align}
		(f\ast g)^\wedge_{n,k}=\hat{f}_n \hat{g}_{n,k}
	\end{align}
	Hence we get by use of \eqref{eqn:anest} that
	\begin{align}
		\|f\ast g\|_{\sobs{s+k+\nicefrac{1}{2}}}^2&=\sum_{n=0}^\infty\sum_{k=-n}^n\hat{f}_n^2\hat{g}_{n,k}^2(n+\tfrac{1}{2})^{2s+2k+1}\nonumber
		\\&\leq\sum_{n=0}^\infty\sum_{k=-n}^n\frac{2\pi\|f\|_{H^s(\SS)}^2}{(n+\frac{1}{2})^{2s+1}}\hat{g}_{n,k}^2(n+\tfrac{1}{2})^{2s+2k+1}\nonumber
		\\&=2\pi\|f\|_{H^s(\SS)}^2\sum_{n=0}^\infty\sum_{k=-n}^n\hat{g}_{n,k}^2(n+\tfrac{1}{2})^{2k}
		\\&=2\pi  \| f\|_{\sobs{s}}^2 \| g\|_{\sobs{k}}^2,\nonumber
	\end{align}
	which concludes the proof.
\end{proof}

\subsection{Proof of Proposition \ref{Linftyboundedness}}\label{app:cinftest}

\begin{proof}
	Consider a set $\tilde{X}\subset\mathcal{B}_r=\{x\in\R^3:|x|<r\}$, with $r=1+c\nu q_X$, such that $\tilde{X}|_\SS=X$, $q_{\tilde{X}}=q_X$, $\textnormal{dist}(\tilde{X}\setminus X,\SS)>c\nu q_X$, and $h_{\tilde{X}}\leq 4c\nu q_X$. For $f$ in $H^s(\SS)$, we denote by $\tilde{f}$ an extension to $H^s(\mathcal{B}_r)$ with $\tilde{f}|_\SS=f$ and $\|\tilde{f}\|_{L^\infty(\mathcal{B}_r)}\leq 2\|f\|_{L^\infty(\SS)}$. Finally, let $\tilde{\Psi}_\delta$ be the same as $\Psi_\delta$, simply extended to act on $\mathcal{B}_r\times \mathcal{B}_r$ instead of $\SS\times\SS$ (which is unproblematic since $\Psi$ relies on a Euclidean radial basis function $\psi$ as indicated in Definition \ref{def:dilatioylkernels}). With this setup, \cite[Theorem 3.5]{towwen13} states that there exists a $C>0$, depending on $c$, $\nu$, and $s$, such that for every scale $\nu q_X<\delta< c\nu q_X$, it holds 
	\begin{equation}
		\|I_{\tilde{X}, \tilde{\Psi}_{\delta}}(\tilde{f}) \|_{L^{\infty}(\mathcal{B}_r)} \leq C \|\tilde{f} \|_{L^{\infty}(\mathcal{B}_r)}.
	\end{equation}
	The assumptions $\textnormal{dist}(\tilde{X}\setminus X,\SS)>c\nu q_X$ and $\delta<c\nu q_X$ yield that $I_{\tilde{X}, \tilde{\Psi}_{\delta}}(\tilde{f})|_\SS=I_{X, \Psi_{\delta}}(f)$ and subsequently, since by construction 
	\begin{equation}
		\|I_{X, {\Psi}_{\delta}}(f) \|_{L^{\infty}(\SS)}\leq\|I_{\tilde{X}, \tilde{\Psi}_{\delta}}(\tilde{f}) \|_{L^{\infty}(\mathcal{B}_r)} \leq C \|\tilde{f} \|_{L^{\infty}(\mathcal{B}_r)} \leq 2C \|f \|_{L^{\infty}(\SS)},
	\end{equation}
	that \eqref{eqn:linftyestsph} holds true.
\end{proof}

\subsection{A Bounded Extremal Problem for Approximation in $D_{\pm,\Sigma}$}\label{app:bep}
We are interested in the following bounded extremal problem: given $f_e=f_++e$ in $L^2(\SS)$ and $c> 0$, find $f_+^{n,e}$ in $\mathcal{D}_n$ such that 
\begin{align}\label{eqn:dbep}
f_+^{n,e}=\textnormal{argmin}\{\|f_e-g\|_{L^2(\SS)}:g\in \mathcal{D}_n,\,\|\tau_{ptm}(g)\|_{L^2(\SS)}\leq c\}.\tag{BEP}
\end{align}
The finite-dimensional subspace $\mathcal{D}_n\subset D_{+,\Sigma}$ be defined as in \eqref{eqn:choicedn}. In order to quantify the weak convergence of $\tau_{ptm}(f_+^{n,e})$ to $\tau_{ptm}(f_+)$, we further consider the following adjoint bounded extremal problem: For some $y\in L^2(\SS)$ and some bound $t>0$, find $h_t\in\textnormal{dom}(\tau_{ptm}^*)$ such that $h_t=\textnormal{argmin}\{\|y-h\|_{L^2(\SS)}:h\in \textnormal{dom}(\tau_{ptm}^*),\,\|\tau_{ptm}^*(h)\|_{L^2(\SS)}\leq t\}$. For convenience, we denote 
\begin{align}
	J_y(t)=\|y-h_t\|_{L^2(\SS)}.
\end{align}
The existence of a solution $h_t$ and the guarantee that $J_y(t)$ tends to zero as $t\to\infty$ follows along the same lines as for the original problem (e.g., \cite[App. A.4]{gerhuakeg23}). With this notation, we can now deduce the following estimate.

\begin{prop}\label{prop:weakconvergenceofdbep}
	Let $f_e=f_++e$ be in $L^2(\SS)$, with $f_+$ in $D_{+,\Sigma}$ and $\|e\|_{L^2(\SS)}<\eps$. Let all further setup be as in Theorem \ref{thm:densedsigma+} and set $\delta_n=C \left(\eta^{n}+h_{X_n}^{\kappa}\right)\left(\|f_+\|_{\sobs{s}}+\|\ptm (f_+)\|_{\sobs{s}}\right)$, with constants $C>0$ and $\eta\in(0,1)$ as indicated in Theorem \ref{thm:densedsigma+}. If $f_+^{n,e}$ is the solution to \eqref{eqn:dbep}, then, for any $y\in L^{2}(\SS)$ and $c> \|\tau_{ptm}(f_+)\|_{L^2(\SS)}$, we have that 
	\begin{align}
		\|f_+^{n,e}-f_+\|_{L^2(\SS)}\leq 2\eps+\delta_n
	\end{align} 
	and
	\begin{align}
		\left|\langle \ptm(f_+^{n,e}), y\rangle_{L^2(\SS)}-\langle \ptm(f_+), y\rangle_{L^2(\SS)}\right| \leq \inf_{t>0}\left\{2c J_{y}(t) +\Ctwo (2\eps+\delta_n) \right\}.		
	\end{align}
\end{prop}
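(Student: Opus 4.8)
The plan is to handle the two estimates in tandem, in both cases exploiting the explicit near-optimal approximant furnished by Theorem \ref{thm:approxall} as an admissible competitor for the minimization \eqref{eqn:dbep}. Write $f_+^n\in\mathcal{D}_n$ for the function from \eqref{eqn:vest2}; by \eqref{eqn:vest4} and \eqref{eqn:vest5} it satisfies simultaneously $\|f_+-f_+^n\|_{L^2(\SS)}\leq\delta_n$ and $\|\ptm(f_+)-\ptm(f_+^n)\|_{L^2(\SS)}\leq\delta_n$, where $\ptm(f_+^n)$ is the closed-form function \eqref{eqn:vest3} supplied by Remark \ref{rem:ptmrbf}. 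Since $c>\|\ptm(f_+)\|_{L^2(\SS)}$, the second bound gives $\|\ptm(f_+^n)\|_{L^2(\SS)}\leq\|\ptm(f_+)\|_{L^2(\SS)}+\delta_n\leq c$ as soon as $\delta_n\leq c-\|\ptm(f_+)\|_{L^2(\SS)}$, so that $f_+^n$ is feasible in \eqref{eqn:dbep}.

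For the first estimate I would simply chain triangle inequalities, using optimality of $f_+^{n,e}$ and admissibility of $f_+^n$ together with $\|f_e-f_+\|_{L^2(\SS)}=\|e\|_{L^2(\SS)}<\eps$:
\begin{align*}
	\|f_+^{n,e}-f_+\|_{L^2(\SS)}&\leq \|f_+^{n,e}-f_e\|_{L^2(\SS)}+\|f_e-f_+\|_{L^2(\SS)}\\
	&\leq \|f_+^n-f_e\|_{L^2(\SS)}+\eps\leq \|f_+^n-f_+\|_{L^2(\SS)}+2\eps\leq \delta_n+2\eps.
\end{align*}

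The heart of the matter is the weak-convergence bound. Since $\ptm$ is linear and both $f_+^{n,e}$ and $f_+$ lie in $D_{+,\Sigma}=\dom{\ptm}$, I set $u=f_+^{n,e}-f_+\in\dom{\ptm}$ and split, for the adjoint BEP solution $h_t\in\dom{\ptm^*}$,
\begin{align*}
	\langle \ptm(u),y\rangle_{L^2(\SS)}=\langle \ptm(u),y-h_t\rangle_{L^2(\SS)}+\langle u,\ptm^*(h_t)\rangle_{L^2(\SS)},
\end{align*}
where the second term is legitimate by the very definition of the adjoint of the unbounded operator $\ptm$, valid because $u\in\dom{\ptm}$ and $h_t\in\dom{\ptm^*}$. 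The first term is controlled by $\|\ptm(u)\|_{L^2(\SS)}\,J_y(t)$, and here $\|\ptm(u)\|_{L^2(\SS)}\leq\|\ptm(f_+^{n,e})\|_{L^2(\SS)}+\|\ptm(f_+)\|_{L^2(\SS)}\leq 2c$, using the constraint in \eqref{eqn:dbep} for the first summand and $c>\|\ptm(f_+)\|_{L^2(\SS)}$ for the second; this produces the contribution $2c\,J_y(t)$. The second term is bounded by Cauchy--Schwarz and the already proved first estimate as $\|u\|_{L^2(\SS)}\,\|\ptm^*(h_t)\|_{L^2(\SS)}\leq(2\eps+\delta_n)\,\Ctwo$, using $\|\ptm^*(h_t)\|_{L^2(\SS)}\leq \Ctwo$. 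Adding the two and taking the infimum over $\Ctwo>0$ yields the asserted inequality.

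The step demanding the most care is precisely the admissibility of the competitor $f_+^n$, i.e. the inequality $\|\ptm(f_+^n)\|_{L^2(\SS)}\leq c$: it rests on the closed-form identity $\ptm(f_+^n)=f_-^n$ from Remark \ref{rem:ptmrbf} combined with \eqref{eqn:vest5}, and is guaranteed only once $\delta_n\leq c-\|\ptm(f_+)\|_{L^2(\SS)}$, which holds for $n$ large since $\delta_n\to 0$ and $c>\|\ptm(f_+)\|_{L^2(\SS)}$. One must also respect the unboundedness of $\ptm$: the adjoint splitting is permissible purely through the domain memberships of $u$ and $h_t$, and no continuity of $\ptm$ is ever invoked, only the $L^2$-bound $\|\ptm(u)\|_{L^2(\SS)}\leq 2c$ coming from the BEP constraint itself.
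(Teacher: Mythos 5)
Your proof is correct and follows essentially the same route as the paper's: an admissible competitor in $\mathcal{D}_n$, the triangle-inequality chain for the strong estimate, and the splitting against the adjoint-BEP solution $h_t$ (giving $2cJ_y(t)$ plus $(2\eps+\delta_n)t$) for the weak estimate. If anything, your appeal to Theorem \ref{thm:approxall} and Remark \ref{rem:ptmrbf} to certify the feasibility bound $\|\ptm(f_+^n)\|_{L^2(\SS)}\leq c$ is slightly more explicit than the paper, which cites only Theorem \ref{thm:densedsigma+} for its competitor $\mathfrak{f}_n$.
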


\begin{proof}
	Without loss of generality we assume $c\geq \|\ptm(f)\|+\delta_n$. Theorem \ref{thm:densedsigma+} then guarantee the existence of an $\mathfrak{f}_n$ in $\mathcal{D}_n$ such that $\|f_+-\mathfrak{f}_n\|\leq \delta_n$ and $\|\ptm(\mathfrak{f}_n)\|\leq c$, i.e., $\mathfrak{f}_n$ lies in the feasible set of \eqref{eqn:dbep}. Thus,
	\begin{align*}
		\|f_+^{n,e}-f_+\|_{L^2(\SS)}&\leq \|f_+^{n,e}-f_e\|_{L^2(\SS)}+\eps \leq  \|\mathfrak{f}_n-f_e\|_{L^2(\SS)}+\eps
		\\&\leq\|\mathfrak{f}_n-f_+\|_{L^2(\SS)}+\|f_+-f_e\|_{L^2(\SS)}+\eps\leq \delta_n+2\eps.
	\end{align*}
	We then get that, for any $t>0$ and any $y$ in $L^2(\SS)$,
	\begin{align*}
		&\left|\langle \ptm(f_+^{n,e}), \dualelement\rangle_{L^2(\SS)}-\langle \ptm(f_+), \dualelement\rangle_{L^2(\SS)}\right| \\
		&\leq\left|\langle \ptm(f_+^{n,e})- \ptm(f_+), \dualelement- h_t\rangle_{L^2(\SS)}\right|+\left|\langle \ptm(f_+^{n,e}-f_+), h_t\rangle_{L^2(\SS)}\right|\\ 
		&\leq \left(\|\ptm(f_+^{n,e})\|_{L^2(\SS)}+\|\ptm(f_+)\|_{L^2(\SS)}\right) J_{\dualelement}(\Ctwo) + \left|\langle f_+^{n,e}-f_+, \ptm^*(h_t)\rangle_{L^2(\SS)}\right|\\
		&\leq 2cJ_{\dualelement}(\Ctwo)+(2\eps+\delta_n)t.
	\end{align*}			
	Taking the infimum over $t$ leads to the desired assertion.
\end{proof}

The proposition above guarantees that $f_+^{n,e}$ converges to $f_+$ and that $\tau_{ptm}(f_+^{n,e})$ weakly converges to $\tau_{ptm}(f_+)$ as $n\to\infty$ and $\eps\to0$ (assuming that both $f_+$ and $\tau_{ptm}(f_+)$ are in $H^s(\SS)$). The rate of the weak convergence clearly depends on the behaviour of $J_y(t)$ with respect to $t$. A more precise study of this, however, is beyond the scope of the paper at hand.

\end{document}